\documentclass[11pt,a4paper]{article}

\usepackage{amssymb} 
\usepackage{amsmath}
\usepackage{amsthm}
\usepackage{amsfonts}

\usepackage[T1]{fontenc}
\usepackage[latin1]{inputenc}
\usepackage{lmodern}

\usepackage{geometry}
\usepackage{color}

\usepackage{mathrsfs}
      
\usepackage{microtype}     
\usepackage{hyperref}      	
\usepackage{verbatim}

\usepackage{geometry}

\usepackage{extarrows}

\hypersetup{
    colorlinks=true,
    linkcolor=blue,
    citecolor=blue,      
    urlcolor=blue
}

\swapnumbers
\numberwithin{equation}{section}

\theoremstyle{plain}
\newtheorem{theorem}{Theorem}[section]
\newtheorem{lemma}[theorem]{Lemma}
\newtheorem{corollary}[theorem]{Corollary}

\theoremstyle{definition}
\newtheorem{example}[theorem]{Example}
\newtheorem{definition}[theorem]{Definition}
\newtheorem{remark}[theorem]{Remark}
\newtheorem*{remark*}{Remark}

\DeclareMathOperator{\spt}{spt}
\DeclareMathOperator{\cut}{Cut}
\DeclareMathOperator{\Div}{div}
	
\title{Some geometric inequalities for varifolds on Riemannian manifolds based on monotonicity identities}

\author{Christian Scharrer\thanks{Max Planck Institute for Mathematics, Vivatsgasse 7, 53111 Bonn, Germany. \texttt{scharrer@mpim-bonn.mpg.de}}}

\begin{document} 
\maketitle

\begin{abstract} 
	Using Rauch's comparison theorem, we prove several monotonicity inequalities for Riemannian submanifolds. Our main result is a general Li--Yau inequality which is applicable in any Riemannian manifold whose sectional curvature is bounded above (possibly positive). We show that the monotonicity inequalities can also be used to obtain Simon type diameter bounds, Sobolev inequalities and corresponding isoperimetric inequalities for Riemannian submanifolds with small volume. Moreover, we infer lower diameter bounds for closed minimal submanifolds as corollaries. All the statements are intrinsic in the sense that no embedding of the ambient Riemannian manifold into Euclidean space is needed. Apart from Rauch's comparison theorem, the proofs mainly rely on the first variation formula, thus are valid for general varifolds. 
\end{abstract}

\paragraph{MSC-classes 2010}
		49Q15 (Primary); 53C21 (Secondary).
\paragraph{Keywords}
	Varifolds on Riemannian manifolds, lower diameter bounds for minimal submanifolds, Li--Yau inequality, Sobolev inequality, sectional curvature.

\section{Introduction}
Many inequalities that relate the mean curvature of submanifolds with other geometric quantities such as the diameter can be obtained in some way from monotonicity identities, which are formulas that can be used to deduce monotonicity of weighted density ratios. In the Euclidean case, these identities are typically proven by testing the first variation formula with certain vector fields. One of the main ingredients in the construction of these vector fields is the inclusion map of the submanifold into the ambient Euclidean space. A key observation in the computations is that its relative divergence equals the dimension of the submanifold. In the Riemannian case, the inclusion map of a submanifold is not a vector field; however one can perform analogous arguments by using the vector field $r \nabla r$, where $r$ is the distance function to a given point (see for instance \textsc{Anderson}~\cite{MR679768}).  Indeed, its relative divergence is not constant but can be bounded below on small geodesic balls by Rauch's comparison theorem (see Lemma~\ref{lem:pre:Rauch_application}). Such an idea revealed to be very fruitful; for instance, it enabled \textsc{Hoffman--Spruck} \cite{MR365424} to derive a Sobolev inequality for Riemannian manifolds. The idea of testing the first variation formula with the vector field $r\nabla r$ in combination with Hessian comparison theorems for the distance function that give a lower bound of the relative divergence was used again in the works of \textsc{Karcher--Wood}~\cite{MR765831} and \textsc{Xin}~\cite{MR851976}. Their resulting monotonicity inequalities imply Liouville type vanishing theorems for harmonic vector bundle valued $p$-forms. Later, the same idea was used by several authors to prove vanishing theorems in various settings, see for instance \textsc{Dong--Wei}~\cite{MR2795324}. The technique was recently applied by \textsc{Mondino--Spadaro}~\cite{MR3611014} to derive an inequality that relates the radius of balls with the volume and area of the boundary. See also \textsc{Nardulli--Osorio Acevedo}~\cite{MR4130849} who used the technique to prove monotonicity inequalities for varifolds on Riemannian manifolds. A weighted monotonicity inequality was obtained by \textsc{Nguyen}~\cite{nguyen2021weighted}.

In the present paper, we apply the described technique to prove Riemannian counterparts of the Euclidean monotonicity inequalities and their consequences from \textsc{Simon}~\cite{MR756417,MR1243525} and \textsc{Allard}~\cite{MR0307015}. Our main result is a general Li--Yau inequality (see Theorem~\ref{thm:Li-Yau_inequality}). We start with a brief introduction to intrinsic varifolds on Riemannian manifolds in Section~\ref{subsec:Riemannian_varifolds}. All our monotonicity inequalities (see Section~\ref{sec:monotonicity_inequalities}) as well as our main theorems (see Section~\ref{subsec:Geometric_inequalities}) are valid for general varifolds. In particular, all our results can be applied to isometrically immersed Riemannian submanifolds (see Example~\ref{ex:submanifolds_are_varifolds}). Indeed, the Li--Yau inequality for ambient manifolds with positive upper bound on the sectional curvature is also new in the smooth case.

\subsection{Varifolds on Riemannian manifolds} \label{subsec:Riemannian_varifolds}
Let $m,n$ be positive integers satisfying $m\leq n$. Given any $n$-dimensional vector space~$V$, we define the \emph{Grassmann manifold} $\mathbf G(V,m)$ to be the set of all $m$-dimensional linear subspaces of $V$. For $V = \mathbf R^n$, we write $\mathbf G(n,m):=\mathbf G(\mathbf R^n,m)$. One can show that $\mathbf G(n,m)$ is a smooth Euclidean submanifold, see for instance \cite[3.2.29(4)]{MR41:1976}. 

Let $(N,g)$ be an $n$-dimensional Riemannian manifold. We denote with $\mathbf G_m(TN)$ the \emph{Grassmann $m$-plane bundle} of the tangent bundle $TN$ of $N$. That is, there exists a map $\pi:\mathbf G_m(TN)\to N$ such that for each $p\in N$, the fibre $\pi^{-1}(p)$ is given by the Grassmannian manifold $\mathbf G(T_pN,m)$. Given any open set $U$ in $N$ and a chart $x:U\to \mathbf R^n$ of $N$, we note that $\pi^{-1}[U]$ is homeomorphically mapped onto an open subset of  $\mathbf R^n\times\mathbf G(n,m)$ via
\begin{equation*}
	\mathbf G_m(TN)\supset \pi^{-1}[U] \xlongrightarrow{(x\circ \pi,\mathrm dx_{\pi})}\mathbf R^n\times \mathbf G(n,m).
\end{equation*}
This turns $\mathbf G_m(TN)$ into a differentiable manifold. We define
\begin{equation*}
	\mathbf G_m(N):=\{(p,T)\in N\times\mathbf G_m(TN):p=\pi(T)\}
\end{equation*}
and note that $\mathbf G_m(N)$ and $\mathbf G_m(TN)$ are homeomorphic. In particular, $\mathbf G_m(N)$ is a locally compact and separable metric space. 

With an $m$-dimensional \emph{varifold} in $N$ we mean a Radon measure $V$ over $\mathbf G_m(N)$. The space of all $m$-dimensional varifolds on $N$ is denoted with $\mathbf V_m(N)$. The \emph{weight} measure $\|V\|$ of a varifold $V$ is defined by
\begin{equation*}
	\|V\|(A) = V\{(p,T)\in \mathbf G_m(N): p\in A\} \qquad \text{whenever $A\subset N$.}
\end{equation*}
It is the push forward measure of the varifold under the projection $\mathbf G_m(N)\to N$. In particular, $\|V\|$ is a Radon measure on $N$ (see \cite[Lemma 2.6]{menne2017novel}). 

The space of compactly supported vector fields on $N$ is denoted with $\mathscr X(N)$. Given any $X\in \mathscr X(N)$, $p\in N$, and $T\in \mathbf G(T_pN,m)$ with orthonormal basis $\{e_1,\ldots,e_m\}$, we let
\begin{equation*}
	\Div_TX(p) = \sum_{i=1}^mg_p(\nabla_{e_i}X(p),e_i),
\end{equation*} 
where $\nabla$ denotes the Levi-Civita connection. Moreover, we denote with $\spt X$ the support of $X$. The \emph{first variation} of a varifold~$V$ is defined as the linear functional
\begin{equation*}
	\delta V:\mathscr X(N) \to \mathbf R, \qquad \delta V(X)=\int \Div_T X(p)\,\mathrm dV(p,T).
\end{equation*}
The total variation $\|\delta V\|$ of $\delta V$ is defined by
\begin{equation*}
	\|\delta V\|(U) = \sup\{\delta V(X):X\in\mathscr X(N),\spt X\subset U, g(X,X)\leq1\}
\end{equation*} 
whenever $U$ is an open subset of $N$, and
\begin{equation*}
	\|\delta V\|(A) = \inf\{\|\delta V\|(U):\text{$U$ is open in $N$, $A\subset U$}\}
\end{equation*}
whenever $A$ is any subset of $N$. 

Finally, we say that $H$ is the  \emph{generalised mean curvature} of $V$ in $(N,g)$, if and only if $H:N\to TN$ is $\|V\|$ measurable, $\|\delta V\|$ is a Radon measure over $N$, there exists a $\|\delta V\|$ measurable map $\eta$ taking values in $TN$ such that $\|\delta V\|$ almost everywhere, $g(\eta,\eta)\leq 1$, and
\begin{equation}\label{eq:intro:var:mean_curvature}
	\delta V(X) = -\int g(X,H)\,\mathrm d\|V\| + \int g(X,\eta)\,\mathrm d\|\delta V\|_{\mathrm{sing}},
\end{equation}
where $\|\delta V\|_{\mathrm{sing}} = \|\delta V\| - \|\delta V\|_{\|V\|}$, and $\|\delta V\|_{\|V\|}$ is the absolutely continuous part of $\|\delta V\|$ with respect to $\|V\|$, see \cite[2.9.1]{MR41:1976}.

It remains to mention that each isometrically immersed Riemannian manifold $M\to N$ can be considered as a varifold in $N$. For more details, see Example~\ref{ex:submanifolds_are_varifolds}. 

\subsection{Notation and definitions}
Suppose $(X,d)$ is a metric space, $\mu$ is a Radon measure on $X$, and $m$ is a positive integer. Denote with $\boldsymbol \alpha(m)$ the volume of the unit ball in $\mathbf R^m$. Given any $p\in X$ and $r>0$, we define the balls
\begin{equation*}
	B_r(p):=\{x\in X:d(p,x)< r\},\qquad \bar B_r(p):=\{x\in X:d(p,x)\leq r\}. 
\end{equation*}
The $m$-dimensional \emph{lower density} $\mathbf \Theta^{m}_*(\mu,p)$ and \emph{upper density} $\mathbf \Theta^{*m}(\mu,p)$ of $\mu$ at $p\in X$ are defined by
\begin{equation*}
	\mathbf \Theta^{m}_*(\mu,p) = \liminf_{r\to 0+}\frac{\mu (\bar B_r(p))}{\boldsymbol\alpha(m)r^m},\qquad \mathbf \Theta^{*m}(\mu,p) = \limsup_{r\to 0+}\frac{\mu (\bar B_r(p))}{\boldsymbol\alpha(m)r^m}.
\end{equation*}
Moreover, if $\mathbf \Theta^{m}_*(\mu,p)=\mathbf \Theta^{*m}(\mu,p)$, we let $\mathbf \Theta^{m}(\mu,p):=\mathbf \Theta^{m}_*(\mu,p)$.
The \emph{support} $\spt \mu$ of the measure $\mu$ is defined by
\begin{equation*}
	\spt \mu := X\setminus \bigcup\{U:\text{$U$ is open in $X$, $\mu(U)=0$}\}.
\end{equation*}	

\begin{definition}
	Suppose $N$ is a Riemannian manifold and $p\in N$.
	
	We say that $U$ is an open \emph{geodesically star-shaped neighbourhood} of $p$ if and only if there exists an open star-shaped neighbourhood $D$ of $0$ in $T_pN$ such that the exponential map $\exp_p:D\to U$ is a diffeomorphism with $\exp_p[D] = U$, and all geodesics emanating from $p$ are length-minimising in $U$.
	
	Similarly, we say that the open ball $B_r(p)$ with radius $r>0$ is a \emph{geodesic ball} if it is a geodesically star-shaped neighbourhood of $p$.
\end{definition}
Given a complete Riemannian manifold $N$, $p\in N$, and writing $r=d(p,\cdot)$, we denote with $\cut(p)$ the cut locus of $p$ in $N$, and define the \emph{radial curvature} $K_r$ on $N\setminus\cut(p)$ to be the restriction of the sectional curvature to all planes that contain the gradient $\nabla r$ of~$r$. Moreover, given any subset $A\subset N$, we denote with $i_p(A)$ the injectivity radius at $p$ in $N$ restricted to the subset $A$. 

Typically, we denote with $|\cdot|_g$ the norm induced by a Riemannian metric $g$. 

\subsection{Geometric inequalities} \label{subsec:Geometric_inequalities}
The following lower diameter bound for closed minimal submanifolds was proven and discussed by \textsc{Xia}~\cite{MR1698433} for the special case where the ambient manifold $N$ is given by the $n$-dimensional real projective space of curvature $1$. To the author's knowledge, little is known in the general case. In particular, the study of sharp lower bounds for different model spaces remains an open problem. The theorem is a direct consequence of Lemma~\ref{lem:dia-pin:lower_diameter_bound} in combination with \eqref{eq:pre:rem:Rauch:lower_bound_zero} and Example~\ref{ex:submanifolds_are_varifolds}.
 
\begin{theorem}[Lower diameter bounds for closed minimal Riemannian submanifolds] \label{thm:low_dia_bds_min}
	Suppose $N$ is a complete Riemannian manifold, $M$ is a closed minimal submanifold of~$N$ with extrinsic diameter $d_{\mathrm{ext}}(M)=\sup_{M\times M} d$, $b>0$, the sectional curvature $K$ of $N$ satisfies $\sup_MK\leq b$, and $p\in M$.
	
	Then, there holds
	\begin{equation}\label{eq:thm:lower_diameter_bound_minimal_submanifold}
		d_{\mathrm{ext}}(M) \geq \min\Bigl\{i_p(N),\frac{\pi}{2\sqrt{b}}\Bigr\}.
	\end{equation}
\end{theorem}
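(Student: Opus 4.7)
The plan is to reduce the statement to Lemma~\ref{lem:dia-pin:lower_diameter_bound} by regarding the closed minimal submanifold $M$ as a varifold. Using Example~\ref{ex:submanifolds_are_varifolds}, I would first associate to the immersion an $m$-dimensional varifold $V \in \mathbf{V}_m(N)$ whose weight measure is (the push-forward of) the $m$-dimensional Riemannian volume of $M$ and whose generalised mean curvature coincides with the mean curvature vector of the immersion. Because $M$ is minimal this mean curvature vanishes and hence $\delta V = 0$; because $M$ is closed, $\spt\|V\|$ is compact with $d_{\mathrm{ext}}(\spt\|V\|) = d_{\mathrm{ext}}(M)$, and every $p \in M$ is a point of positive $m$-dimensional density of $\|V\|$.

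With this reformulation in hand, the second step is to apply Lemma~\ref{lem:dia-pin:lower_diameter_bound} at the given $p$. That lemma, the diameter-bound output of the monotonicity machinery in Section~\ref{sec:monotonicity_inequalities}, should produce a lower bound on the diameter of $\spt\|V\|$ in terms of $p$, the injectivity radius $i_p(N)$, the upper curvature bound $b$ for the sectional curvature of $N$ along $M$, and a suitable norm of the generalised mean curvature. Here I would invoke the hypothesis $\sup_M K \leq b$ together with \eqref{eq:pre:rem:Rauch:lower_bound_zero}, whose role is to certify that, when $b > 0$, the Rauch-type lower bound on $\Div_T(r\nabla r)$ is nonnegative up to the radius $\pi/(2\sqrt{b})$; this is exactly the threshold appearing in the conclusion. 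Specialising the resulting estimate to $H \equiv 0$ then yields precisely \eqref{eq:thm:lower_diameter_bound_minimal_submanifold}.

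The heart of the argument (inside Lemma~\ref{lem:dia-pin:lower_diameter_bound}) is a standard monotonicity picture that I would unfold as follows: on a geodesic ball $B_r(p)$ with $r < \min\{i_p(N), \pi/(2\sqrt{b})\}$, testing the first-variation formula with a cutoff multiple of the vector field $r\nabla r$ and using the Rauch bound on $\Div_T(r\nabla r)$ shows that a weighted density ratio in $r$ is nondecreasing, modulo an error controlled by $H$. In the stationary case the ratio is genuinely monotone, so the density at $p$ governs the total mass in $B_r(p)$; but if $d_{\mathrm{ext}}(M)$ were strictly smaller than $\min\{i_p(N), \pi/(2\sqrt{b})\}$, then for every $r$ slightly above $d_{\mathrm{ext}}(M)$ the numerator $\|V\|(B_r(p))$ would already equal $\|V\|(N)$ and stay constant, while the denominator continues to grow, contradicting monotonicity.

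The main obstacle I anticipate lies not in the final reduction, which is largely formal, but in the preceding Lemma~\ref{lem:dia-pin:lower_diameter_bound}: it must accommodate simultaneously the cut locus (through $i_p(N)$) and the possibly positive curvature bound (through $\pi/(2\sqrt{b})$), and do so at the generality of varifolds without regularity assumptions on the support. Once the lemma is available in that form, Theorem~\ref{thm:low_dia_bds_min} follows by plugging in the varifold furnished by Example~\ref{ex:submanifolds_are_varifolds} and exploiting the vanishing of its mean curvature.
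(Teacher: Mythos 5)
Your proposal is correct and follows exactly the route the paper cites for this theorem: convert the closed minimal submanifold to a varifold via Example~\ref{ex:submanifolds_are_varifolds} (so that $\delta V=0$ and $\spt\|V\|=M$), apply Lemma~\ref{lem:dia-pin:lower_diameter_bound}, and invoke the strict positivity $x\cot x>0$ for $0\leq x<\pi/2$ from~\eqref{eq:pre:rem:Rauch:lower_bound_zero}. One small remark on your speculative account of what happens ``inside'' Lemma~\ref{lem:dia-pin:lower_diameter_bound}: that lemma is not itself a monotonicity statement but a one-shot inequality, $m\int_N\sqrt{b}\,r\cot(\sqrt{b}\,r)\,\mathrm d\|V\|\leq\rho\,\|\delta V\|(N)$, obtained by testing the first variation once with $r\nabla r$ and using the Rauch bound from Lemma~\ref{lem:pre:Rauch_application}. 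In the stationary case the right-hand side vanishes while the integrand is strictly positive on $\spt\|V\|$ whenever $\sup_q d(p,q)<\pi/(2\sqrt b)$, and the contradiction is immediate -- no tracking of a density ratio across scales is required. Your monotonicity picture would also yield the result, but the paper's version is shorter.
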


\begin{remark}\label{rem:thm:low_dia_bnd_min_submanifold}
	Notice that if the sectional curvature $K$ of $N$ satisfies $K\leq 0$, then \eqref{eq:thm:lower_diameter_bound_minimal_submanifold} becomes
	\begin{equation}\label{eq:intro:lower_diam_bound_negative_curvature}
		d_{\mathrm{ext}}(M) \geq i_p(N).
	\end{equation}
	If instead, the sectional curvatures of $N$ are pinched between $\frac{1}{4}b$ and $b$, and $N$ is simply connected, then, by a result of \textsc{Klingenberg}~\cite{MR139120},  \eqref{eq:thm:lower_diameter_bound_minimal_submanifold} becomes
	\begin{equation}\label{eq:intro:lower_diam_bound}
		d_{\mathrm{ext}}(M) \geq \frac{\pi}{2\sqrt{b}}.
	\end{equation}
	In the special case where $N=\mathbf RP^n$ is the $n$-dimensional real projective space of curvature~1, the lower bound in~\eqref{eq:intro:lower_diam_bound} is attained if and only if $M$ is totally geodesic, see~\cite{MR1698433}. However, the same does not hold for the unit sphere $N=\mathbf S^7$. Indeed, the complex projective space $\mathbf CP^2(\frac{4}{3})$ of complex dimension 2 and complex sectional curvature~$\frac{4}{3}$ has diameter $\frac{\sqrt{3}\pi}{2}$ and can be isometrically and minimally imbedded into $\mathbf S^7$, see \cite{MR1698433,MR470913}. 
	It remains an interesting open problem to study sharp lower diameter bounds for closed minimal submanifolds in different ambient manifolds, in particular in $\mathbf S^n$.  
\end{remark}

We have the following generalisation of~\eqref{eq:intro:lower_diam_bound_negative_curvature} to asymptotically non-positively curved ambient manifolds as a consequence of Lemma~\ref{lem:dia-pin:asymptotically_negatively_curved} in combination with \eqref{eq:pre:rem:Rauch:lower_bound_zero}, and Example~\ref{ex:submanifolds_are_varifolds}.

\begin{theorem}\label{thm:lower_dia_bds_min_asypmt_neg}
	Suppose $m,n$ are positive integers, $m\leq n$, $N$ is a complete $n$-dimensional Riemannian manifold, $p\in N$, $r = d(p,\cdot)$,  $D(p) := N\setminus(\cut(p)\cup \{p\})$, $0<b\leq1/4$, the radial curvature $K_r$ satisfies $K_r \leq \frac{b}{r^2}$ on $D(p)$, and $M$ is a closed minimal submanifold of $N$ with $p\in M$.
	
	Then, the extrinsic diameter $d_{\mathrm{ext}}(M)$ of $M$ in $N$ can be bounded below by the cut locus distance:
	\begin{equation*}
		d_{\mathrm{ext}}(N) \geq i_p(N).
	\end{equation*}
\end{theorem}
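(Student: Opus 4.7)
The plan is to follow the same recipe indicated in the theorem's preamble: view the closed minimal submanifold $M$ as a varifold $V \in \mathbf V_m(N)$ via Example~\ref{ex:submanifolds_are_varifolds}, observe that minimality of $M$ translates into $H = 0$ and $\|\delta V\|_{\mathrm{sing}} = 0$, apply the Rauch bound \eqref{eq:pre:rem:Rauch:lower_bound_zero} to $\Div_T(r\nabla r)$ on $D(p)$, and invoke Lemma~\ref{lem:dia-pin:asymptotically_negatively_curved}. Since $M$ is closed, it has no boundary, so in fact the first variation $\delta V$ vanishes identically, and the weight measure $\|V\|$ is supported on the image of $M$.

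The point of the hypothesis $0<b\leq 1/4$ is that, under the radial sectional curvature bound $K_r \le b/r^2$, the Jacobi equation $f''+(b/r^2)f = 0$ has non-oscillating solutions of power type $r^{(1\pm\sqrt{1-4b})/2}$. Consequently, by Rauch comparison, one gets a nonnegative lower bound of the form $\Div_T(r\nabla r) \geq c_m(r)$ on all of $D(p)$ with $c_m(r)>0$, with no upper cutoff forced by conjugate points. This is exactly the content of \eqref{eq:pre:rem:Rauch:lower_bound_zero} in the $b\leq 1/4$ regime, and it is what makes Lemma~\ref{lem:dia-pin:asymptotically_negatively_curved} applicable up to the injectivity radius $i_p(N)$. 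Lemma~\ref{lem:dia-pin:asymptotically_negatively_curved}, applied to the stationary varifold $V$, will then yield that the density ratio $\|V\|(\bar B_r(p))/r^m$ admits a positive lower bound for every $r<i_p(N)$, coming from $\mathbf\Theta^{*m}(\|V\|,p)\geq 1$ since $p\in M$ is a regular point of the $m$-submanifold.

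Once this monotonicity-type lower bound is in hand, the diameter estimate is immediate by contradiction. Suppose $d_{\mathrm{ext}}(M) < i_p(N)$. Then there exists $r_0 < i_p(N)$ with $\spt\|V\|\subset \bar B_{r_0}(p)$, so $\|V\|(\bar B_r(p)) = \|V\|(\bar B_{r_0}(p))$ is constant for $r\in[r_0,i_p(N))$. This contradicts the strict growth $\|V\|(\bar B_r(p)) \gtrsim r^m$ delivered by Lemma~\ref{lem:dia-pin:asymptotically_negatively_curved} on the same interval, concluding $d_{\mathrm{ext}}(M)\geq i_p(N)$.

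The one nontrivial input is the Rauch step: checking that \eqref{eq:pre:rem:Rauch:lower_bound_zero} indeed gives a lower bound that survives on the whole of $D(p)\cap B_{i_p(N)}(p)$ and not merely on small geodesic balls. This is where the threshold $b\leq 1/4$ is essential: above this value, Jacobi fields would start to oscillate and the divergence lower bound would fail past the first conjugate radius. Everything else is bookkeeping: translating minimality into the varifold first variation, verifying that Example~\ref{ex:submanifolds_are_varifolds} places $M$ in the hypotheses of Lemma~\ref{lem:dia-pin:asymptotically_negatively_curved}, and extracting the diameter contradiction.
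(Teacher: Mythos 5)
Your setup is right — view $M$ as the varifold $V$ of Example~\ref{ex:submanifolds_are_varifolds}, note that $M$ closed and minimal gives $\|\delta V\|=0$, and invoke Lemma~\ref{lem:dia-pin:asymptotically_negatively_curved}. But you misread what that lemma delivers, and your closing contradiction does not close.

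Lemma~\ref{lem:dia-pin:asymptotically_negatively_curved} gives an \emph{upper} bound on total mass:
\begin{equation*}
\|V\|(N)\;\leq\;\frac{2\rho}{m\bigl(1+\sqrt{1-4b}\bigr)}\,\|\delta V\|(N),
\end{equation*}
valid whenever $\spt\|V\|\subset B_\rho(p)$ and $\|V\|(\cut(p))=0$. It is not a monotonicity inequality and does not give the lower bound $\|V\|(\bar B_r(p))\gtrsim r^m$ you claim. Your attempted contradiction — that $\|V\|(\bar B_r(p))$ constant on $[r_0,i_p(N))$ contradicts a growth estimate $\gtrsim r^m$ — would in any case not be a contradiction when $i_p(N)$ is finite, since a constant can perfectly well sit above $\varepsilon r^m$ on a bounded interval. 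The correct argument is one line shorter and goes the other way: if $d_{\mathrm{ext}}(M)<i_p(N)$, pick $\rho$ with $d_{\mathrm{ext}}(M)<\rho<i_p(N)$. Then $\spt\|V\|\subset B_\rho(p)$ (since $p\in M$), and $B_\rho(p)\cap\cut(p)=\varnothing$ forces $\|V\|(\cut(p))=0$, so Lemma~\ref{lem:dia-pin:asymptotically_negatively_curved} with $\|\delta V\|(N)=0$ yields $\|V\|(N)\leq0$, contradicting $\|V\|(N)=|M|>0$.

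One smaller inaccuracy: the divergence lower bound here does not come from \eqref{eq:pre:rem:Rauch:lower_bound_zero} (which is about $x\cot x$ and the constant-curvature comparison of Lemma~\ref{lem:pre:Rauch}); it comes from the asymptotic Hessian comparison Lemma~\ref{lem:dia-pin:comparison}, which gives $\nabla^2 r\geq \frac{1+\sqrt{1-4b}}{2r}(g-\mathrm dr\otimes\mathrm dr)$ on all of $D(p)$ and hence $\Div_T(r\nabla r)\geq m\frac{1+\sqrt{1-4b}}{2}>0$ with no radius cutoff. Your discussion of the Jacobi equation and the threshold $b\leq 1/4$ is correct in spirit, but you are attributing it to the wrong comparison estimate.
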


In \cite[Lemma 1.1]{MR1243525}, \textsc{Simon} showed a diameter pinching theorem for closed surfaces in the Euclidean space in terms of their Willmore energy and their area. The upper diameter bound was improved and generalised to Euclidean submanifolds by \textsc{Topping}~\cite{MR2410779} using the Michael--Simon Sobolev inequality~\cite{MR344978}. It was further generalised to Euclidean submanifolds with boundary by \textsc{Menne--Scharrer}~\cite{menne2017novel} leading to a priori diameter bounds for solutions of Plateau's problem. The Riemannian equivalent of Topping's upper diameter bound was proven by \textsc{Wu--Zheng}~\cite{MR2823054} using the Hoffman--Spruck Sobolev inequality~\cite{MR365424}. The following theorem is the Riemannian version for varifolds of Simon's diameter pinching. It is proven in Section~\ref{subsec:proof_diameter_pinching}. In the smooth case it follows from \cite{MR2823054}.

\begin{theorem}[Diameter pinching] \label{thm:diameter_pinching}
	Suppose $n$ is an integer, $n\geq2$, $N$ is a complete $n$-dimensional Riemannian manifold with positive injectivity radius $i>0$, $b\geq0$, the sectional curvature satisfies $K\leq b$, $V\in \mathbf V_2(N)$ has generalised mean curvature $H$, $H$ is square integrable with respect to $\|V\|$, $\|\delta V\|$ is absolutely continuous with respect to~$\|V\|$, $\spt\|V\|$ is compact and connected,
	\begin{equation}\label{eq:thm:dia-pin:normal_mean_curvature}
		H(x)\bot T\qquad \text{for $V$ almost all $(x,T) \in \mathbf G_2(N)$},
	\end{equation}
	and 
	\begin{equation}\label{eq:thm:dia-pin:lower_density_bound}
		\mathbf \Theta^{*2}(\|V\|,x) \geq 1\qquad\text{for $\|V\|$ almost all $x\in N$}.
	\end{equation}
	
	If $\|V\|(N)\leq C_{\eqref{eq:proof:dia-pin:definition_area_bound}}(i,b)$, then
	\begin{equation}\label{eq:thm:dia-pin:lower_bound_Willmore}
		\pi \leq \int|H|_g^2\,\mathrm d\|V\|
	\end{equation}
	and the extrinsic diameter $d_{\mathrm{ext}}(\spt \|V\|)$ of the support of $\|V\|$ is bounded above:
	\begin{equation}\label{eq:thm:dia-pin:upper_diameter_bound}
		d_{\mathrm{ext}}(\spt\|V\|) \leq 2 \sqrt{\|V\|(N)}\left(\sqrt{\int |H|_g^2\,\mathrm d\|V\|} + b\|V\|(N)\right).
	\end{equation}
	Moreover, if $d_{\mathrm{ext}}(\spt\|V\|)<\min\{i,\frac{\pi}{3\sqrt{b}}\}$, then
	\begin{equation}\label{eq:thm:dia-pin:lower_diameter_bound}
		\sqrt{\|V\|(N)\left/\int |H|_g^2\,\mathrm d\|V\|\right.} \leq d_{\mathrm{ext}}(\spt\|V\|).
	\end{equation}	
\end{theorem}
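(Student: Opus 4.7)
The plan is to mirror Simon's Euclidean diameter-pinching argument, with the intrinsic monotonicity and Li--Yau inequalities from Section~\ref{sec:monotonicity_inequalities} and Theorem~\ref{thm:Li-Yau_inequality} standing in for their Euclidean counterparts, and with the smallness threshold $C_{\eqref{eq:proof:dia-pin:definition_area_bound}}(i,b)$ tuned so that the sectional-curvature corrections are harmless. The first thing I would do is upgrade $\|V\|(N)\leq C_{\eqref{eq:proof:dia-pin:definition_area_bound}}(i,b)$ into a coarse a priori diameter bound on $\spt\|V\|$: the density assumption $\Theta^{*2}(\|V\|,\cdot)\geq 1$, combined with the lower density estimate $\|V\|(\bar B_{r}(x))\geq c\,r^{2}$ coming from monotonicity, applied along a chain of overlapping geodesic balls through the connected set $\spt\|V\|$, will force $\spt\|V\|$ into a single geodesic ball of radius strictly less than $\min\{i,\pi/(2\sqrt{b})\}$. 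On such a ball all the Rauch-based tools of the paper apply, and the normality assumption~\eqref{eq:thm:dia-pin:normal_mean_curvature} together with the absolute continuity of $\|\delta V\|$ with respect to $\|V\|$ lets one rewrite $\delta V(X)$ cleanly in terms of $H$ alone.

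The energy lower bound~\eqref{eq:thm:dia-pin:lower_bound_Willmore} would then follow from a single application of Theorem~\ref{thm:Li-Yau_inequality} at a point $x_{0}\in\spt\|V\|$ with $\Theta^{*2}(\|V\|,x_{0})\geq 1$: the Li--Yau inequality reads there schematically as $4\pi\leq\tfrac{1}{4}\int|H|_g^{2}\,\mathrm d\|V\|+(\text{correction in }b\text{ and }\|V\|(N))$, and choosing $C_{\eqref{eq:proof:dia-pin:definition_area_bound}}(i,b)$ small enough that the correction is at most $15\pi$ yields $\pi\leq\int|H|_g^{2}\,\mathrm d\|V\|$. For the upper diameter bound~\eqref{eq:thm:dia-pin:upper_diameter_bound} I would follow Topping's chaining argument, adapted to the Riemannian setting as in Wu--Zheng: for each $x\in\spt\|V\|$ and each $r$ up to the extrinsic diameter, the monotonicity inequality provides a lower bound on $\|V\|(\bar B_{r}(x))/r^{2}$ that involves $r^{-1}\int_{\bar B_{r}(x)}|H|_g\,\mathrm d\|V\|$ and a curvature term of size $b\,\|V\|(N)$. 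Summing such estimates along a minimising path between two extremal points of $\spt\|V\|$, and applying the Cauchy--Schwarz inequality to replace $\int|H|_g\,\mathrm d\|V\|$ by $\|V\|(N)^{1/2}(\int|H|_g^{2}\,\mathrm d\|V\|)^{1/2}$, will reproduce the exact form of~\eqref{eq:thm:dia-pin:upper_diameter_bound}, with the additive term $b\,\|V\|(N)$ carrying the sectional-curvature correction.

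Finally, for the lower diameter bound~\eqref{eq:thm:dia-pin:lower_diameter_bound}, the hypothesis $d_{\mathrm{ext}}(\spt\|V\|)<\min\{i,\pi/(3\sqrt{b})\}$ is precisely what is needed so that a geodesic ball of radius $d_{\mathrm{ext}}$ centred at a fixed point of $\spt\|V\|$ contains the whole support and still lies in the regime where the monotonicity identity is sharp. Evaluating the monotonicity inequality at scale $r=d_{\mathrm{ext}}$ and once more using Cauchy--Schwarz to handle the mean-curvature term produces $\|V\|(N)\leq d_{\mathrm{ext}}^{2}\int|H|_g^{2}\,\mathrm d\|V\|$, which rearranges to~\eqref{eq:thm:dia-pin:lower_diameter_bound}. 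The main obstacle I anticipate is the simultaneous calibration of $C_{\eqref{eq:proof:dia-pin:definition_area_bound}}(i,b)$: it must be small enough to enforce the coarse diameter bound of the first step, to absorb the curvature correction in the Li--Yau estimate leading to~\eqref{eq:thm:dia-pin:lower_bound_Willmore}, and simultaneously to leave the correction term in~\eqref{eq:thm:dia-pin:upper_diameter_bound} in the clean form $b\,\|V\|(N)$; matching these three constraints is the delicate bookkeeping that I expect to require the most care.
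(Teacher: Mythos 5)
Your first step — inferring a coarse a priori diameter bound on $\spt\|V\|$ from $\|V\|(N)\leq C_{\eqref{eq:proof:dia-pin:definition_area_bound}}(i,b)$ and $\mathbf\Theta^{*2}(\|V\|,\cdot)\geq 1$, via a lower area estimate $\|V\|(\bar B_r(x))\geq c\,r^2$ and a chain of balls — does not go through, and this breaks the logic of your second step as well. The lower bound on $\|V\|(\bar B_r(x))/r^2$ supplied by the monotonicity inequality \eqref{eq:proof:dia-pin:monotonicity_inequality} is of the form $\pi\leq (2+br^2)\|V\|(\bar B_r(x))/r^2 + \tfrac12\int_{\bar B_r(x)}|H|_g^2\,\mathrm d\|V\|$, so it degenerates on any ball where the concentrated Willmore energy exceeds $2\pi$; since the hypotheses place no a priori bound on $\int|H|_g^2\,\mathrm d\|V\|$ (this is precisely what \eqref{eq:thm:dia-pin:lower_bound_Willmore} is supposed to estimate from below, not above), there is no guarantee that $\spt\|V\|$ fits inside a geodesic ball of radius $<\min\{i,\pi/(2\sqrt b)\}$. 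Consequently you cannot invoke Theorem~\ref{thm:Li-Yau_inequality} at all, because it requires $\spt\|V\|\subset U$ for a geodesically star-shaped neighbourhood $U$ and, for $b>0$, $\sup_{q\in\spt\|V\|}d(p,q)<\pi/(2\sqrt b)$ — global conditions that are not available here.

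The paper circumvents this entirely by working locally: Lemma~\ref{lem:mon-ine:positive_curvature} only needs $\spt\|V\|\cap\bar B_\rho(p)$ to sit inside a geodesically star-shaped neighbourhood of $p$, which is automatic for every $p\in\spt\|V\|$ and every $\rho<\rho_0:=\min\{i,\pi/(2\sqrt b)\}$ thanks to the injectivity radius assumption. Combining this with Theorem~\ref{thm:mon-ine:existence_density}, absolute continuity of $\|\delta V\|$, and \eqref{eq:thm:dia-pin:lower_density_bound} gives \eqref{eq:proof:dia-pin:monotonicity_inequality} at every $p\in\spt\|V\|$ and $\rho<\rho_0$; letting $\rho\to\rho_0$ and using $\|V\|(N)\leq\rho_0^2/(2+b\rho_0^2)$ bounds the area term by $1<\pi$, which yields \eqref{eq:thm:dia-pin:lower_bound_Willmore} with no diameter control whatsoever, no appeal to Li--Yau, and essentially no calibration effort beyond the definition of $C_{\eqref{eq:proof:dia-pin:definition_area_bound}}$. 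Your descriptions of the upper bound (chaining disjoint balls plus Cauchy--Schwarz to replace $\int|H|_g$ by $\|V\|(N)^{1/2}(\int|H|_g^2)^{1/2}$) and the lower bound (testing the first variation with $r\nabla r$ at scale $d_{\mathrm{ext}}$, using $\sqrt br\cot(\sqrt br)\geq\tfrac12$ on $[0,\pi/3]$) are broadly aligned with what the paper does, modulo the caveat that each local monotonicity application must stay at scale $<\rho_0$ rather than "up to the extrinsic diameter" as you wrote.
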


\begin{remark}
	Notice that if the injectivity radius is infinite and the sectional curvatures are non-positive, then both the condition on the area for the upper bound and the condition on the diameter for the lower bound disappear. Indeed, for $i=\infty$ and $b=0$, there holds $C_{\ref{eq:proof:dia-pin:definition_area_bound}}(i,b) = \infty = \min\{i,\frac{\pi}{3\sqrt{b}}\}$. Notice also that if $b=0$, then the diameter bounds are the exact analogous of Simon's Euclidean version \cite[Lemma 1.1]{MR1243525}. In fact, the only difference is the restriction on the area through the injectivity radius in case it is not infinite.
	
	If $N = \mathbf R^n$, then the condition on the generalised mean curvature to bo normal~\eqref{eq:thm:dia-pin:normal_mean_curvature} is satisfied for all integral varifolds, see~\cite[Section 5.8]{MR485012}. Similarly, condition~\eqref{eq:thm:dia-pin:lower_density_bound} is satisfied for all integral varifolds. On the other hand, \eqref{eq:thm:dia-pin:lower_density_bound} implies rectifiability, see~\cite[Theorem 5.5(1)]{MR0307015}.
\end{remark}

In the early 60s, \textsc{Willmore} \cite{MR0202066} showed that the energy now bearing his name is bounded below by $4\pi$ on the class of closed surfaces $\Sigma\subset \mathbf R^3$: 
\begin{equation}\label{eq:intro:Willmore_inequality}
	\frac{1}{4}\int_{\Sigma}H^2\,\mathrm d\mu \geq 4\pi
\end{equation}
where $H$ denotes the sum of the principal curvatures and $\mu$ is the canonical Radon measure on $\Sigma$ given by the immersion. The inequality is also referred to as \emph{Willmore inequality}. Equality holds only for round spheres. Willmore's inequality was improved by \textsc{Li--Yau} \cite[Theorem~6]{MR674407} for smoothly immersed closed surfaces $f:\Sigma\to\mathbf R^n$: If there exists $p\in\mathbf R^n$ with $f^{-1}(p)=\{x_1,\ldots,x_k\}$ where the $x_i$'s are all distinct points in $\Sigma$, in other words $f$ has a point of multiplicity $k$, then 
\begin{equation*}
	\frac{1}{4}\int_{\Sigma}|H|^2\,\mathrm d\mu \geq 4\pi k.
\end{equation*}
In particular, if the Willmore energy $\frac{1}{4}\int_{\Sigma}|H|^2\,\mathrm d\mu$ lies strictly below $8\pi$, then $f$ is an embedding. Because of this property, the Li--Yau inequality has become very useful for the minimisation of the Willmore functional and, more generally, for the study of immersed surfaces. Due to the conformal invariance of the Willmore functional observed by \textsc{Chen}~\cite{MR0370436}, Willmore's inequality has an analogue for surfaces $\Sigma$ in the three sphere~$\mathbf S^3$:
\begin{equation}\label{eq:intro:spherical_Willmore_inequality}
	\frac{1}{4}\int_{\Sigma}H^2\,\mathrm d\mu + |\Sigma| \geq 4\pi
\end{equation} 
and an analogue for surfaces $\Sigma$ in the hyperbolic space $\mathbf H^3$:
\begin{equation}\label{eq:intro:Hyperbolic_Willmore_inequality}
	\frac{1}{4}\int_{\Sigma}H^2\,\mathrm d\mu - |\Sigma| \geq 4\pi,
\end{equation} 
where $|\Sigma|=\int1\,\mathrm d\mu$ denotes the area of $\Sigma$. \textsc{Kleiner}~\cite{MR1156385} showed
\begin{equation}\label{eq:intro:Willmore_inequality_negative_curvature}
	\frac{1}{4}\int_{\Sigma_0} H^2\,\mathrm d\mu + b|\Sigma_0|\geq 4\pi 
\end{equation}
for minimisers $\Sigma_0$ of the isoperimetric profile in a complete one-connected 3-dimensional Riemannian manifold without boundary and with sectional curvatures bounded above by $b\leq0$. Then, \textsc{Ritor\'e} \cite{MR2167269} showed that \eqref{eq:intro:Hyperbolic_Willmore_inequality} remains valid for all $C^{1,1}$ surfaces in a 3-dimensional Cartan--Hadamard manifold with sectional curvatures bounded above by~$-1$. \textsc{Schulze} showed that the classical Willmore inequality~\eqref{eq:intro:Willmore_inequality} holds true for integral 2-varifolds in 3-dimensional Cartan--Hadamard manifolds~\cite[Lemma~6.7]{MR2420018}. Then, he showed that \eqref{eq:intro:Willmore_inequality_negative_curvature} remains valid for integral 2-varifolds in $n$-dimensional Cartan--Hadamard manifolds, see \cite[Theorem~1.4]{MR4080508}. Finally, \textsc{Chai}~\cite{MR4131799} upgraded \eqref{eq:intro:Hyperbolic_Willmore_inequality} to a Li--Yau inequality. That is, given a smoothly immersed surface $f:\Sigma\to\mathbf H^n$ in the hyperbolic space $\mathbf H^n$ which has a point of multiplicity $k$, then
\begin{equation*}
	\frac{1}{4}\int_{\Sigma}|H|^2\,\mathrm d\mu - |\Sigma| \geq 4\pi k.
\end{equation*} 
Notice also the recent generalisation of the Willmore inequality to higher dimensional submanifolds by \textsc{Agostiniani--Fogagnolo--Mazzieri}~\cite{MR4169055}. They showed that for closed codimension 1 submanifolds $M$ in a non-compact $n$-dimensional Riemannian manifold $(N,g)$ with non-negative Ricci curvature, there holds
\begin{equation*}
	\int_{M}\left|\frac{H}{n-1}\right|^{n-1}\,\mathrm d\mu \geq \mathrm{AVG}(g)|\mathbf S^{n-1}|
\end{equation*}
where $\mathrm{AVG}(g)$ denotes the asymptotic volume ratio of $(N,g)$. See also \textsc{Chen}~\cite{MR291994} for the earlier Euclidean version.

In our following theorem, we upgrade the result of \textsc{Schulze}~\cite{MR4080508} about the inequality~\eqref{eq:intro:Willmore_inequality_negative_curvature} to a Li--Yau inequality in any non-positively curved ambient manifold. More importantly, we upgrade the spherical Willmore inequality~\eqref{eq:intro:spherical_Willmore_inequality} to a Li--Yau inequality for varifolds on any Riemannian manifold with an upper bound (possibly positive) on the sectional curvature. The proof is done in Section~\ref{subsec:Li-Yau}. For non-positively curved ambient manifolds, it is analogous to \textsc{Chai}~\cite{MR4131799}; for ambient manifolds with positive upper bound on the curvature it is inspired by \textsc{Simon}~\cite{MR1243525}.

\begin{theorem}[Li--Yau inequality]\label{thm:Li-Yau_inequality}
	Suppose $n$ is an integer, $n\geq2$, $(N,g)$ is an $n$-dimensional Riemannian manifold, $p\in N$, $U$ is a geodesically star-shaped open neighbourhood of $p$, $V\in \mathbf V_2(N)$ has generalised mean curvature $H$, $H$ is square integrable with respect to $\|V\|$, $H(x)\bot T$ for $V$ almost all $(x,T) \in \mathbf G_2(N)$, $p\notin \spt\|\delta V\|_{\mathrm{sing}}$, $\spt \|V\|$ is compact, $\spt\|V\|\subset U$, $b \in \mathbf R$, the sectional curvature of $N$ satisfies $\sup_{\spt\|V\|}K\leq b$, and either 
	\begin{equation*}
		b>0,\qquad \sup_{q\in\spt\|V\|}d(p,q)<\frac{\pi}{2\sqrt{b}},\quad\text{and}\quad C=\frac{16}{\pi^2}
	\end{equation*}
	or 
	\begin{equation*}
		b\leq0\quad\text{and}\quad C=1.
	\end{equation*}
	
	Then, there holds
	\begin{equation*}
		4\pi\mathbf \Theta^2(\|V\|,p) \leq \frac{1}{4} \int_N|H|_g^2\,\mathrm d\|V\| + bC\|V\|(N) + \int_N t_b(r)\,\mathrm d\|\delta V\|_{\mathrm{sing}}
	\end{equation*}
	where $t_b(r) = 2/r$ if $b\geq0$, and $t_b(r) = 2\sqrt{|b|}\coth(\sqrt{|b|}r/2)$ if $b<0$.
\end{theorem}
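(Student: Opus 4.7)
The plan is to derive the inequality from the Riemannian monotonicity identity established in Section~\ref{sec:monotonicity_inequalities} (in its form tailored to $m=2$), applied at the base point $p$. Concretely, for $0<\sigma<\rho$ with $\overline{B_\rho(p)}\subset U$, one tests the first variation with a radial vector field of the form $\phi(r)\,r\nabla r$ with $\phi(r)=r^{-2}$, where $r=d(p,\cdot)$. The hypotheses $H\bot T$, $\sup_{\spt\|V\|}K\leq b$ and $p\notin\spt\|\delta V\|_{\mathrm{sing}}$ are exactly what is needed to ensure that the Rauch-comparison lower bound on $r\,\Div_T\nabla r$ applies throughout $B_\rho(p)$ and that the point $p$ contributes no singular boundary term in the limit $\sigma\to 0^+$.

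After this step, the monotonicity inequality should take the schematic form
\[
\frac{\|V\|(B_\sigma(p))}{\pi\sigma^2} \leq \frac{\|V\|(B_\rho(p))}{\pi\rho^2} + \frac{1}{16\pi}\int_{B_\rho(p)}|H|_g^2\,\mathrm d\|V\| + \frac{1}{4\pi}\int_{B_\rho(p)}t_b(r)\,\mathrm d\|\delta V\|_{\mathrm{sing}},
\]
so that passing $\sigma\to 0^+$ produces $\mathbf\Theta^2(\|V\|,p)$ on the left (its existence is a by-product of the same monotonicity), and multiplying through by $4\pi$ converts the inequality into the promised form up to the choice of $\rho$. When $b\leq 0$, the geodesic ball $B_\rho(p)$ may be enlarged to contain $\spt\|V\|$ and then $\rho$ sent to infinity; the volume term $4\|V\|(N)/\rho^2$ vanishes in the limit and the residual curvature correction in the monotonicity collapses into the single term $b\|V\|(N)$ with $C=1$, mirroring Chai's argument in the hyperbolic case. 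When $b>0$, Rauch comparison is only effective for $\rho<\pi/(2\sqrt{b})$, but the strict hypothesis $\sup_{q\in\spt\|V\|}d(p,q)<\pi/(2\sqrt{b})$ allows $\rho\nearrow\pi/(2\sqrt{b})$; then $\spt\|V\|\subset B_\rho(p)$ eventually, and $4\|V\|(N)/\rho^2\to 16\,b\,\|V\|(N)/\pi^2 = bC\|V\|(N)$ with $C=16/\pi^2$.

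The main obstacle is the positive-curvature case: unlike in Euclidean or non-positively curved ambient spaces, one cannot let $\rho\to\infty$, so the boundary-volume term cannot be annihilated and instead survives precisely as the $16\,b\,\|V\|(N)/\pi^2$ residue. Obtaining exactly this constant amounts to careful bookkeeping of the Rauch comparison factors in the monotonicity formula as $\rho\nearrow\pi/(2\sqrt{b})$, a regime in which the curvature correction terms may \emph{a priori} blow up; the strict inequality $\sup_{\spt\|V\|}d(p,\cdot)<\pi/(2\sqrt{b})$ is precisely what guarantees that only the first term on the right-hand side contributes in the limit, and with the stated constant. The remaining verifications, the $\sigma\to 0^+$ limit and the finiteness of the $t_b(r)$ integral against $\|\delta V\|_{\mathrm{sing}}$, are routine consequences of $p\notin\spt\|\delta V\|_{\mathrm{sing}}$ together with the compactness of $\spt\|V\|$.
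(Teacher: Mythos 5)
Your overall strategy---test the first variation with a radial vector field, use Rauch comparison to control the relative divergence, let $\sigma\to0^+$ to produce the density, and then take the outer radius $\rho$ to a limit---is the right one and matches the paper's approach. However, there are two concrete errors in the bookkeeping that would not survive an attempt to write the proof out.

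First, your ``schematic'' monotonicity formula omits the essential curvature-correction term. Testing with $\varphi(r/t)\,r\nabla r$ on a Riemannian manifold produces, in addition to the Euclidean-looking terms you list, an integral of the form $\int_{B_\rho(p)}(1 - a_b(r))/r^2\,\mathrm d\|V\|$ with $a_b(r)=\sqrt{b}r\cot(\sqrt{b}r)$, coming from the gap between $\Div_T(r\nabla r)$ and the Euclidean value $2$. This term is \emph{the} source of $bC\|V\|(N)$ in the conclusion, and it cannot be dropped. Related to this, for $b<0$ the paper does not use $r\nabla r$ at all: it needs the weighted field $\varphi(r)s_b(r)\nabla r$ with $s_b(r)=\sinh(\sqrt{|b|}r)/\sqrt{|b|}$ (Lemma~\ref{lemma:mon-ine:negative_curvature}), which is precisely what produces the sharp coefficient $C=1$ and the $\coth$ in $t_b$. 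Your proposal gives no account of this.

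Second, your explanation of where $C=16/\pi^2$ comes from in the positive-curvature case is incorrect. You attribute it to the boundary-volume term $4\|V\|(N)/\rho^2$ evaluated at $\rho=\pi/(2\sqrt{b})$. But there is no reason to stop $\rho$ at $\pi/(2\sqrt{b})$: Rauch comparison is only needed on $\spt\|V\|\cap B_\rho(p)$, and since the hypothesis forces $\spt\|V\|\subset B_{\rho_1}(p)$ for some $\rho_1<\pi/(2\sqrt{b})$, the comparison holds on the whole support for \emph{every} $\rho$. One therefore sends $\rho\to\infty$, so the boundary-volume term (together with the other $\rho$-weighted remainders) vanishes, exactly as in the $b\le0$ case. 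The constant $16/\pi^2$ is instead the pointwise bound $(1-a_b(r))/r^2\le \tfrac{4b}{\pi^2}$ on $\spt\|V\|$, coming from the monotonicity of $x\mapsto(1-x\cot x)/x^2$ with value $4/\pi^2$ at $x=\pi/2$ (equation~\eqref{eq:rem:mon-ine:pos-cur}); after multiplying the monotonicity inequality by $4$ this gives $16b/\pi^2\cdot\|V\|(N)$. The numerical coincidence that $4/\rho^2\big|_{\rho=\pi/(2\sqrt b)}$ equals $4\cdot\tfrac{4b}{\pi^2}$ is what makes your (double-error) bookkeeping come out to the right constant, but the reasoning is not correct; in particular with the actual monotonicity formula and your choice $\rho\nearrow\pi/(2\sqrt b)$ you would obtain roughly twice the stated constant.
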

\begin{remark}
	Notice that the existence of the density $\mathbf \Theta^2(\|V\|,p)$ is part of the statement. Indeed, existence of the density as well as its upper semi-continuity are local statements that do not require any global upper bounds on the curvature nor do they require positive injectivity radius, see Theorem~\ref{thm:mon-ine:existence_density}.
\end{remark}
If the varifold is given by a smoothly immersed surface, then the theorem reads as follows (see Example~\ref{ex:submanifolds_are_varifolds}).
\begin{corollary}
	Suppose $n$ is an integer, $n\geq3$, $(N,g)$ is an $n$-dimensional Riemannian manifold, $b\in\mathbf R$, the sectional curvature $K$ of $N$ satisfies $K\leq b$, $\Sigma$ is a closed surface, $f:\Sigma \to N$ is a smooth immersion, and $f$ has a point of multiplicity $k$. Let $H$ be the trace of the second fundamental form of the immersion $f$, $\mu$ be the Radon measure on $\Sigma$ induced by the pull-back metric of $g$ along $f$, and $|\Sigma| := \int_{\Sigma}1\,\mathrm d\mu$ be the area of $\Sigma$ in $N$. Then, the following two statements hold.
	\begin{enumerate}
		\item\label{itme:Li-Yau:positive_curvature} If $b>0$, and the image of $f$ is contained in a geodesic ball of radius at most $\frac{\pi}{2\sqrt{b}}$, then 
		\begin{equation*}
			\frac{1}{4}\int_{\Sigma}|H|^2_g\,\mathrm d\mu + \frac{16}{\pi^2}b|\Sigma|\geq 4\pi k.
		\end{equation*}
		\item\label{itme:Li-Yau:negative_curvature} If $b\leq0$ and the image of $f$ is contained in a geodesically star-shaped open neighbourhood of the multiplicity point, then
		\begin{equation*}
			\frac{1}{4}\int_{\Sigma}|H|^2_g\,\mathrm d\mu + b|\Sigma|\geq 4\pi k.
		\end{equation*}
	\end{enumerate}	
	In particular, if the left hand side is strictly smaller than $8\pi$, then $f$ is an embedding.
\end{corollary}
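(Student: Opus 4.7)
The strategy is to associate a $2$-varifold to the smooth immersion $f$ and then directly invoke Theorem~\ref{thm:Li-Yau_inequality}.

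First, following Example~\ref{ex:submanifolds_are_varifolds}, let $V \in \mathbf V_2(N)$ be the varifold induced by $f$, namely the push-forward under $x \mapsto (f(x), \mathrm df_x[T_x\Sigma])$ of the Riemannian measure $\mu$ on $\Sigma$. Then $\|V\|$ is the push-forward of $\mu$ under $f$, so $\|V\|(N) = |\Sigma|$ and $\spt \|V\| = f(\Sigma)$ is compact. Because $\Sigma$ is closed and $f$ is smooth, the classical divergence formula yields $\delta V(X) = -\int_N g(X,H) \, \mathrm d\|V\|$, so $\|\delta V\|$ is absolutely continuous with respect to $\|V\|$, and therefore $\|\delta V\|_{\mathrm{sing}} = 0$. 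In particular $H$ is the generalised mean curvature of $V$, it is automatically normal to $T$ as the trace of the second fundamental form, and its square integrability over $\|V\|$ follows from smoothness of $f$ and closedness of $\Sigma$.

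Second, I would verify that $\mathbf \Theta^2(\|V\|, p) = k$: near each of the preimages $x_1, \dots, x_k$ the immersion $f$ is a local diffeomorphism onto an embedded disc through $p$, and the area of $f(\Sigma) \cap \bar B_r(p)$ decomposes for small $r$ into $k$ sheets each of area $\boldsymbol\alpha(2) r^2 + o(r^2)$ as $r \to 0+$; hence the density exists and equals exactly $k$.

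Third, I match the geodesic hypotheses. In case~\ref{itme:Li-Yau:negative_curvature}, the hypothesis directly supplies a geodesically star-shaped open neighbourhood of the multiplicity point containing $f(\Sigma)$, so Theorem~\ref{thm:Li-Yau_inequality} applies with $C = 1$. In case~\ref{itme:Li-Yau:positive_curvature}, the image lies in an open geodesic ball $B_R(p)$ of radius $R \leq \pi/(2\sqrt b)$ centred at the multiplicity point; since $f(\Sigma)$ is compact and contained in the open ball, one has $\sup_{q \in \spt\|V\|} d(p,q) < R \leq \pi/(2\sqrt b)$, and Theorem~\ref{thm:Li-Yau_inequality} applies with $C = 16/\pi^2$. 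Both stated inequalities then follow from the conclusion of the theorem after inserting $\|\delta V\|_{\mathrm{sing}} = 0$ and $\mathbf \Theta^2(\|V\|, p) = k$. The final embedding statement is immediate: if the left-hand side is strictly below $8\pi$, then any putative multiplicity point must satisfy $k \leq 1$, so $f$ is injective. The only slightly non-trivial step is the identification $\mathbf \Theta^2(\|V\|, p) = k$, which rests on the standard local area expansion for smooth immersions; the remainder is a direct translation between smooth and varifold language.
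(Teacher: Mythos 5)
Your proposal follows the paper's route exactly: build the pushed-forward varifold as in Example~\ref{ex:submanifolds_are_varifolds}, check that $\Sigma$ closed gives $\|\delta V\|_{\mathrm{sing}}=0$, identify $\mathbf\Theta^2(\|V\|,p)=k$, and feed everything into Theorem~\ref{thm:Li-Yau_inequality}.

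One imprecision is worth flagging. The generalised mean curvature of the varifold $V$ is not literally ``the trace of the second fundamental form'' $H_f$ transported to $N$: at a point $x$ with several preimages it is the average $H_V(x)=\mathbf\Theta^2(\|V\|,x)^{-1}\sum_{p\in f^{-1}\{x\}}H_f(p)$, and the first variation identity $\delta V(X)=-\int g(X,H_V)\,\mathrm d\|V\|$ holds with this averaged field, not with $H_f$ itself. Consequently the normality $H_V(x)\perp T$ is not ``automatic'' from normality of each $H_f(p_i)$; it requires \eqref{eq:pre:tangent_planes_immersion}, which says the tangent planes at all preimages agree $V$-a.e. And to pass from the theorem's bound $\frac14\int_N|H_V|_g^2\,\mathrm d\|V\|$ to the corollary's $\frac14\int_\Sigma|H_f|^2_g\,\mathrm d\mu$ you need the Cauchy--Schwarz step from Example~\ref{ex:submanifolds_are_varifolds}, namely $\int_N|H_V|_g^2\,\mathrm d\|V\|\le\int_\Sigma|H_f|^2_g\,\mathrm d\mu$. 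These are not gaps in the plan so much as places where the phrase ``classical divergence formula'' hides the averaging that the self-intersection set forces; for an immersion whose double-point set has positive measure (e.g.\ a covering) the two quantities genuinely differ. Your density computation and the handling of the open-ball hypothesis (compactness of $f(\Sigma)$ giving the strict inequality required by the theorem) are both correct.
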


\begin{remark}
	The upper bound on the radius of the geodesic ball containing the image of $f$ in \eqref{itme:Li-Yau:positive_curvature} can be enlarged up to $\frac{\pi}{\sqrt{b}}$ at the cost of a constant larger than $\frac{16}{\pi^2}$ in front of the area. Moreover, in view of Lemma~\ref{lem:mon-ine:positive_curvature} and \eqref{eq:rem:mon-ine:pos-cur}, the constant $\frac{16}{\pi^2}$ cannot be expected to be sharp. It is an interesting open question whether or not the constant $\frac{16}{\pi^2}$ can be replaced by 1. In view of the spherical version~\eqref{eq:intro:spherical_Willmore_inequality} this seems possible.
	
	If $N$ is a Cartan--Hadamard manifold, then $N$ itself is a geodesically star-shaped open neighbourhood of any point. In particular, there is no condition on $f$ in~\eqref{itme:Li-Yau:negative_curvature}.
\end{remark}

Isoperimetric inequalities play an important role in the theory of varifolds and its applications, see for instance \cite{MR0307015,MR2537022,menne2017novel}. They can be derived from Sobolev inequalities. Allard~\cite[Theorem~7.1]{MR0307015} proved a Sobolev inequality for general varifolds in Euclidean space with a constant depending on the dimension of the varifold and the dimension of the ambient space. \textsc{Michael--Simon}~\cite{MR344978} proved a Sobolev inequality for generalised submanifolds in Euclidean space where the constant depends only on the dimension of the submanifold. Its proof was later adapted by \textsc{Simon}~\cite[Theorem~18.6]{MR756417} for varifolds whose first variation is absolutely continuous with respect to the weight measure. These varifolds correspond to submanifolds without boundary. \textsc{Menne--Scharrer}~\cite{MR3777387} proved a general Sobolev inequality accounting for the unrectifiable part of the varifold as well. Finally, \textsc{Hoyos}~\cite{hoyos2020intrinsic,hoyos2020poincar} generalised \textsc{Simon}'s version \cite[Theorem~18.6]{MR756417} for varifolds on Riemannian manifolds. Our following theorem generalises his inequality for arbitrary varifolds whose first variation doesn't have to be absolutely continuous with respect to the weight measure. In this way, our resulting isoperimetric inequality indeed recovers the smooth version of \textsc{Hoffman--Spruck}~\cite[Theorem~2.2]{MR365424}. The proof of the following theorem can be found in Section~\ref{subsec:proof:Sobolev_inequality}.

\begin{theorem}[Sobolev inequality]\label{thm:Sobolev_inequality}
	Suppose $m, n$ are positive integers, $m\leq n$, $(N,g)$ is a complete $n$-dimensional Riemannian manifold, $V\in \mathbf V_m(N)$, $\|\delta V\|$ is a Radon measure, $i(\spt\|V\|)$ denotes the injectivity radius of $N$ restricted to $\spt\|V\|$, $b>0$, the sectional curvature $K$ of $N$ satisfies $\sup_{\spt\|V\|}K\leq b$,  $d_{\mathrm{ext}}(\spt\|V\|)$ denotes the extrinsic diameter of $\spt\|V\|$ in $N$, and 
	\begin{equation*}
		\min\{d_{\mathrm{ext}}(\spt\|V\|), (\boldsymbol\alpha(m)^{-1}2^{m+1}\|V\|(N))^{1/m}\}<\min\Bigl\{i(\spt\|V\|),\frac{\pi}{2\sqrt{b}}\Bigr\}.
	\end{equation*}
	 
	Then, for all non-negative compactly supported $C^1$ functions $h\leq1$ on $N$, there holds
	\begin{align*}
		&\int_{\{x\in N:h(x)\mathbf\Theta^{m}(\|V\|,x)\geq1\}}h\,\mathrm d\|V\| \\
		&\quad\leq C_\eqref{eq:Sob-ine:constant}(m)\left(\int h\,\mathrm d \|V\|\right)^{1/m}\left(\int h\,\mathrm d\|\delta V\|+ m\sqrt{b}\int h\,\mathrm d\|V\| + \int |\nabla^\top h|_g\,\mathrm dV\right).
	\end{align*}	
	where $(\nabla^\top h)(x,T)$ is the orthogonal projection of $\nabla h(x)$ onto $T$ for $(x,T)\in\mathbf G_m(N)$.
\end{theorem}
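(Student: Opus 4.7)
The plan is to follow the Michael--Simon--Simon template for deriving a Sobolev inequality from a monotonicity inequality, in the form extended to arbitrary first variation by Menne--Scharrer~\cite{MR3777387} and to a Riemannian ambient by Hoyos, combining the two generalisations. The Riemannian monotonicity inequality from Section~\ref{sec:monotonicity_inequalities} takes the role of its Euclidean counterpart; the Rauch-comparison correction produces exactly the extra term $m\sqrt{b}\int h\,\mathrm d\|V\|$ on the right-hand side. The quantitative hypothesis on
\begin{equation*}
	\min\{d_{\mathrm{ext}}(\spt\|V\|),(\boldsymbol\alpha(m)^{-1}2^{m+1}\|V\|(N))^{1/m}\}
\end{equation*}
is tailored so that every ball $B_\rho(x)$ with $x\in\spt\|V\|$ and $\rho$ up to the Michael--Simon threshold $R:=(\boldsymbol\alpha(m)^{-1}2^{m+1}\|V\|(N))^{1/m}$ lies inside a geodesically star-shaped neighbourhood of $x$ in which the monotonicity inequality is valid and the Rauch error remains controlled.

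The first step is to apply the monotonicity inequality to $V$ tested against the vector field $\varphi(r)\,r\,\nabla r\,h(y)$, where $r(y)=d(y,x)$ and $\varphi$ is a non-increasing cutoff, keeping the weight $h$ \emph{inside} the divergence so that $\nabla^\top h$ appears naturally when the first variation is expanded. At each point $x$ of $A:=\{y\in N:h(y)\mathbf\Theta^m(\|V\|,y)\geq 1\}$, the resulting identity together with the density lower bound $\mathbf\Theta^m(\|V\|,x)\geq 1/h(x)$ yields, in the spirit of Simon~\cite[Theorem~18.6]{MR756417}, the following dichotomy: either there exists $\rho(x)\in(0,R]$ with
\begin{equation*}
	h(x)\leq C\rho(x)^{-m}\int_{B_{\rho(x)}(x)}\bigl(h\,\mathrm d\|\delta V\|+m\sqrt{b}\,h\,\mathrm d\|V\|+|\nabla^\top h|_g\,\mathrm dV\bigr),
\end{equation*}
or there exists $\rho(x)\in(0,R]$ with $\rho(x)^m\leq C\int_{B_{\rho(x)}(x)} h\,\mathrm d\|V\|$. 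The specific choice of $R$ is what forces one of the two alternatives to occur for some $\rho(x)\leq R$: on a ball of radius $R$, a weighted density as large as $h(x)$ would, in the absence of the first alternative, make the weighted mass exceed $\int h\,\mathrm d\|V\|$, which is the desired contradiction.

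A Vitali-type covering lemma then extracts from the family $\{B_{\rho(x)}(x):x\in A\}$ a countable pairwise disjoint sub-collection whose five-times enlargement still covers $A$; summing the pointwise inequality over this sub-collection and using the second alternative of the dichotomy to trade the prefactor $\rho(x)^{-m}$ for $(\int h\,\mathrm d\|V\|)^{1/m}$ reproduces the exponent $1/m$ of the statement. The main technical obstacle is ensuring that the singular part $\|\delta V\|_{\mathrm{sing}}$ enters the final estimate weighted by $h(y)$, rather than by $h(x)$, so that the sum over the disjoint balls telescopes to $\int h\,\mathrm d\|\delta V\|$ and not to an unwieldy supremum; this is exactly why the weight $h$ must be kept inside the divergence from the outset, in the style of Menne--Scharrer~\cite{MR3777387}, and it is here that care must be taken to absorb the Rauch error uniformly over the covering radii. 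Once this bookkeeping is handled, the remainder is a direct transcription of the classical Euclidean argument with the single additional $m\sqrt{b}$ curvature term.
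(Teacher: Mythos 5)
Your strategy matches the paper's: weight the varifold by $h$ (defining $V_h$, or equivalently keeping $h$ inside the divergence so that $\Div_T(hX)=h\Div_TX+g(X,\nabla^Th)$ produces the $\nabla^\top h$ term), invoke the $m$-dimensional monotonicity inequality of Lemma~\ref{lem:mon-ine:m-dimensional} together with the Rauch bound~\eqref{eq:mon-ine:rem:m-dim} to obtain the $m\sqrt{b}$ correction, extract a good radius at each point where $h\,\mathbf\Theta^m(\|V\|,\cdot)\geq1$, and close with Vitali's covering theorem. The one place where your sketch is looser than the paper is the good-radius step: the paper delegates it to Lemma~\ref{lem:sob-ine} (Simon's Lemma~18.7), which, under the monotonicity hypothesis, directly returns a radius $\rho\leq\rho_0$ with a bound on $f(5\rho)=\boldsymbol\alpha(m)^{-1}\|V_h\|\bar B_{5\rho}(p)$ in which $\rho_0\sim(\int h\,\mathrm d\|V\|)^{1/m}$ already appears; this is precisely the form that sums over a pairwise disjoint Vitali subfamily. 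Your stated dichotomy, whose first alternative is a pointwise bound $h(x)\leq C\rho^{-m}\int_{B_\rho}(\cdots)$, is not of the right shape to sum directly in the Vitali step, and the way you propose to "use the second alternative to trade $\rho^{-m}$ for $(\int h\,\mathrm d\|V\|)^{1/m}$" is not coherent as written — the two alternatives do not combine at the same radius. This is a repairable presentational issue (replace the pointwise bound by a bound on the weighted measure of the enlarged ball, as the paper does), not a conceptual gap.
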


One can absorb the curvature dependent term in the middle to obtain the following isoperimetric inequality. For its proof see Section~\ref{subsec:proof:isoperimetri_inequality}. 

\begin{corollary}[Isoperimetric inequality]\label{cor:sob-ine}
	If in addition  
	\begin{equation}\label{eq:cor:iso-ine:small_area_condition}
		2C_\eqref{eq:Sob-ine:constant}(m)m\sqrt{b}\|V\|(N)^{1/m} \leq 1
	\end{equation}
	and $\mathbf \Theta^{m}(\|V\|,x)\geq 1$ for $\|V\|$ almost all $x\in \spt\|V\|$, then
	\begin{equation*}
		\|V\|(N)^{\frac{m-1}{m}}\leq 2C_\eqref{eq:Sob-ine:constant}(m)\|\delta V\|(N).
	\end{equation*}
\end{corollary}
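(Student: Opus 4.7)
The plan is to apply Theorem~\ref{thm:Sobolev_inequality} with a test function approximating the characteristic function of $\spt\|V\|$, and then to absorb the curvature-dependent middle term on the right-hand side using the smallness condition~\eqref{eq:cor:iso-ine:small_area_condition}.

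First I would observe that~\eqref{eq:cor:iso-ine:small_area_condition} forces $\|V\|(N)<\infty$, and the case $\|V\|(N)=0$ is trivial, so we may assume $\|V\|(N)>0$. The extrinsic diameter condition from Theorem~\ref{thm:Sobolev_inequality} together with the completeness of $N$ places $\spt\|V\|$ inside a closed bounded (hence compact, by Hopf--Rinow) subset of $N$ — or, failing this, one reduces to the finite-mass case via standard truncation. In either situation one can choose a sequence $h_k\in C^1_c(N)$ with $0\leq h_k\leq 1$, $h_k\uparrow 1$ on $\spt\|V\|$, and $\int|\nabla^{\top}h_k|_g\,\mathrm dV\to 0$; in the compact case one may simply take $h_k\equiv 1$ on a neighbourhood of $\spt\|V\|$ so that the gradient term vanishes exactly. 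Note also that $\|\delta V\|$ is concentrated on $\spt\|V\|$, since any vector field compactly supported away from $\spt\|V\|$ has divergence identically zero on the support of $V$.

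Inserting $h=h_k$ into Theorem~\ref{thm:Sobolev_inequality} and using the hypothesis $\mathbf\Theta^{m}(\|V\|,x)\geq 1$ for $\|V\|$ almost all $x\in\spt\|V\|$, the set $\{h_k\mathbf\Theta^{m}(\|V\|,\cdot)\geq 1\}$ exhausts $\spt\|V\|$ up to a $\|V\|$-null set, so monotone convergence upgrades the inequality to
\begin{equation*}
	\|V\|(N)\leq C_{\eqref{eq:Sob-ine:constant}}(m)\,\|V\|(N)^{1/m}\bigl(\|\delta V\|(N)+m\sqrt{b}\,\|V\|(N)\bigr).
\end{equation*}
Dividing through by $\|V\|(N)^{1/m}$ and rewriting the curvature contribution as
\begin{equation*}
	C_{\eqref{eq:Sob-ine:constant}}(m)\,m\sqrt{b}\,\|V\|(N)=\bigl(C_{\eqref{eq:Sob-ine:constant}}(m)\,m\sqrt{b}\,\|V\|(N)^{1/m}\bigr)\,\|V\|(N)^{(m-1)/m}\leq \tfrac{1}{2}\|V\|(N)^{(m-1)/m}
\end{equation*}
(where the last step is~\eqref{eq:cor:iso-ine:small_area_condition}), one absorbs this term on the left-hand side to obtain the claim.

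No serious obstacle arises: the proof reduces to an algebraic rearrangement of the Sobolev inequality once a legitimate test function is chosen. The only mildly technical point is the limiting argument justifying $h_k\to 1$ — trivial when $\spt\|V\|$ is compact, and handled by standard truncation together with the finiteness of $\|V\|(N)$ otherwise.
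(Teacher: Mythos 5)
Your proof is correct, and it takes a slightly different algebraic route from the paper's. After obtaining
\begin{equation*}
\|V\|(N)\leq C_{\eqref{eq:Sob-ine:constant}}(m)\|V\|(N)^{1/m}\bigl(\|\delta V\|(N)+m\sqrt{b}\|V\|(N)\bigr),
\end{equation*}
the paper does not divide and absorb; instead it substitutes the inequality back into its own right-hand side, iterates $k$ times, and bounds the resulting finite geometric series by the infinite one, which sums to at most $2$ under~\eqref{eq:cor:iso-ine:small_area_condition}. You divide through by $\|V\|(N)^{1/m}$ (justified since condition~\eqref{eq:cor:iso-ine:small_area_condition} with $b>0$ forces $\|V\|(N)<\infty$, and the case $\|V\|(N)=0$ is trivial), rewrite the curvature term as $\bigl(C_{\eqref{eq:Sob-ine:constant}}(m)m\sqrt{b}\|V\|(N)^{1/m}\bigr)\|V\|(N)^{(m-1)/m}\leq\tfrac12\|V\|(N)^{(m-1)/m}$, and subtract. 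The two arguments are equivalent in effect; your direct absorption is shorter, while the paper's iterated/geometric-series version is a Gronwall-type bootstrap that avoids dividing and subtracting explicitly. Both rely on exactly the same smallness hypothesis, and both require the finiteness of $\|V\|(N)$ which you correctly note is automatic. Your remarks on approximating $h\uparrow1$ are also consistent with (and slightly more explicit than) the paper's brief ``let $h$ approach $1$ from below.''
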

\begin{remark}
	Notice that by Example~\ref{ex:submanifolds_are_varifolds}, the inequality above indeed implies the isoperimetric inequality  \cite[Theorem~2.2]{MR365424} for smoothly immersed Riemannian manifolds. Moreover, if the sectional curvatures of $N$ are non-positive (for instance if $N$ is Cartan--Hadamard), then the condition on the volume \eqref{eq:cor:iso-ine:small_area_condition} is satisfied for all $V$ with compact support.
\end{remark}

\textbf{Acknowledgements.}
The author is supported by the EPSRC as part of the MASDOC DTC at the University of Warwick, grant No. EP/HO23364/1. Moreover, I would like thank \textsc{Andrea Mondino} for supervising me, as well as \textsc{Lucas Ambrozio} and \textsc{Manh Tien Nguyen} for discussions on the subject.

\section{Preliminaries}
In this section, we show how the first variation can be represented by integration, see Lemma~\ref{lem:pre:representing_first_variation} and Lemma~\ref{lem:pre:Radon_Nikodym}. These are basic facts that have been frequently used in the study of Euclidean varifolds. We are going to need them in order to prove the Sobolev inequality (Theorem~\ref{thm:Sobolev_inequality}). Moreover, we prove that any smoothly immersed manifold is a varifold, see Lemma~\ref{lem:pre:push_forward_under_immersion} and Example~\ref{ex:submanifolds_are_varifolds}. Finally, we state the Hessian comparison theorems for the distance function (see Lemma~\ref{lem:pre:Rauch} and Lemma~\ref{lem:pre:Rauch_application}) that are crucial to derive the monotonicity inequalities in Section~\ref{sec:monotonicity_inequalities}.

\begin{lemma}[\protect{Compare \cite[4.3(2)]{MR0307015}}] \label{lem:pre:representing_first_variation}
	Suppose $m,n$ are positive integers, $m \leq n$, $(N,g)$ is an $n$-dimensional Riemannian manifold, $V\in \mathbf V_m(N)$, and $\|\delta V\|$ is a Radon measure.
	
	Then, there exists a $\|\delta V\|$ measurable map $\eta:N\to TN$ such that $\|\delta V\|$ almost everywhere, $g(\eta,\eta)\leq1$, and
	\begin{equation*}
	\delta V(X) = \int g(X,\eta) \,\mathrm d\|\delta V\|.
	\end{equation*}
\end{lemma}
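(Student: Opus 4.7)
The plan is to adapt the standard Euclidean Riesz-type argument of \cite[4.3(2)]{MR0307015} to the Riemannian setting by using a local trivialisation of the tangent bundle. The key estimate is
\[
	|\delta V(X)| \leq \|\delta V\|(U)\sup_N |X|_g
\]
for every open $U\subset N$ and every $X\in\mathscr X(N)$ with $\spt X\subset U$; this follows from the very definition of $\|\delta V\|$ applied to $\pm X/\sup_N|X|_g$, combined with the inner regularity of the Radon measure $\|\delta V\|$.

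First I would fix a locally finite open cover $\{U_\alpha\}$ of $N$ by charts chosen small enough that $TN$ admits a smooth orthonormal frame $\{e_1^\alpha,\ldots,e_n^\alpha\}$ over each $U_\alpha$. On a single chart $U=U_\alpha$, and for each index $i\in\{1,\ldots,n\}$, the map $C_c(U)\ni f\mapsto \delta V(f\,e_i^\alpha)$ (defined on smooth $f$ and extended by density) is a bounded linear functional of operator norm at most $\|\delta V\|(U)$. The Riesz representation theorem then produces a signed Radon measure $\mu_i^\alpha$ on $U$ whose total variation is dominated by $\|\delta V\|$, and the Radon--Nikodym theorem yields a $\|\delta V\|$-measurable density $\eta_i^\alpha$ with $\mu_i^\alpha = \eta_i^\alpha\,\|\delta V\|$ on $U$. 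Setting $\eta^\alpha:=\sum_{i=1}^n \eta_i^\alpha e_i^\alpha$ gives a local $TN$-valued representative satisfying $\delta V(X)=\int g(X,\eta^\alpha)\,\mathrm d\|\delta V\|$ for every $X\in\mathscr X(N)$ supported in $U_\alpha$.

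Next I would verify that the local representatives agree $\|\delta V\|$-almost everywhere on the overlaps $U_\alpha\cap U_\beta$: for every $X\in\mathscr X(N)$ with $\spt X\subset U_\alpha\cap U_\beta$ both $\eta^\alpha$ and $\eta^\beta$ recover the same value $\delta V(X)$, so uniqueness of the Radon--Nikodym derivative (applied componentwise in either frame) forces $\eta^\alpha=\eta^\beta$ $\|\delta V\|$-a.e.\ on the overlap. Consequently the $\eta^\alpha$ assemble, after a partition of unity subordinate to $\{U_\alpha\}$, into a global $\|\delta V\|$-measurable section $\eta:N\to TN$ satisfying $\delta V(X)=\int g(X,\eta)\,\mathrm d\|\delta V\|$ for every $X\in\mathscr X(N)$.

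Finally, the pointwise bound $g(\eta,\eta)\leq 1$ $\|\delta V\|$-a.e.\ is established by a duality argument: if $A=\{x\in N:|\eta(x)|_g>1\}$ had positive $\|\delta V\|$-measure, one could use Lusin's theorem together with cutoff functions supported in coordinate charts to construct vector fields $X_k\in\mathscr X(N)$ with $g(X_k,X_k)\leq 1$ and $g(X_k,\eta)\to |\eta|_g$ in $L^1(\|\delta V\|)$ on a bounded open neighbourhood $W$ of a suitable subset of $A$. Passing to the limit in $\delta V(X_k)\leq\|\delta V\|(W)$ would yield $\int_{A'}|\eta|_g\,\mathrm d\|\delta V\|\leq\|\delta V\|(W)$ for $A'\subset A\cap W$ arbitrarily close in measure to $A\cap W$, contradicting $|\eta|_g>1$ on $A$ once $W$ is chosen close to $A$ by outer regularity. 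I expect this last approximation step to be the main technical obstacle, since it requires producing smooth unit test fields that approximate the measurable section $\eta/|\eta|_g$ on a Riemannian manifold; it is handled by the combination of Lusin-type approximation, partitions of unity and mollification within charts.
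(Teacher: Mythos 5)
Your proposal is correct and follows the same overall strategy as the paper: localise $\delta V$ to chart domains carrying a local frame of $TN$, represent the localised functional by a Riesz-type theorem, observe that the local densities agree $\|\delta V\|$-almost everywhere on overlaps, and patch the definition together with a partition of unity. The only substantive difference is one of packaging. The paper invokes Federer's vector-valued representation theorem \cite[2.5.12]{MR41:1976} (together with Lusin's theorem \cite[2.3.6]{MR41:1976} for Borel regularity), which in a single application produces a $\|\delta V\|$-measurable covector $k_\alpha$ already normalised so that $k_{\alpha i}k_{\alpha j}g_\alpha^{ij}=1$ $\|\delta V\|$-a.e.; consequently $|\eta_\alpha|_g=1$ and no further argument for the bound is needed. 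You instead apply the scalar Riesz theorem componentwise in an orthonormal frame, use Radon--Nikodym to produce the densities $\eta_i^\alpha$, and then prove $g(\eta,\eta)\leq1$ by a separate duality argument via Lusin's theorem and smooth approximation. That last step, which you correctly flag as the technical heart of your version, is precisely the content of the normalisation clause that Federer's theorem delivers for free; so you are reproving a portion of \cite[2.5.12]{MR41:1976} by hand rather than citing it. Both arguments are valid and equivalent in substance; the cosmetic choice of orthonormal frames versus coordinate frames $\partial/\partial x_\alpha^i$ (which forces the paper to track the $g_\alpha^{ij}$ explicitly) is immaterial.
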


\begin{remark*}
	Notice that by definition, $\|\delta V\|$ is a Borel regular measure on $N$. Hence, it is a Radon measure if and only if $\|\delta V\|(K)<\infty$ for all compact sets $K \subset N$.
\end{remark*}

\begin{proof}
	Let $\{(x_\alpha,U_\alpha):\alpha \in I\}$ be a countable differentiable atlas on $N$. For all $\alpha \in I$ define 
	\begin{equation*}
		T_\alpha:C_c^\infty(U_\alpha,\mathbf R^n) \to \mathbf R, \qquad T_\alpha(X) = \delta V\Bigl(X^i\frac{\partial}{\partial x_\alpha^i}\Bigr),
	\end{equation*}
	where we've used the Einstein summation convention, and $\frac{\partial}{\partial x_\alpha^i}$ are the coordinate vector fields corresponding to the chart $x_\alpha$. Then,
	\begin{align*}
		&\sup T_\alpha[\{X\in C_c^\infty(U_\alpha,\mathbf R^n):X^iX^jg_{\alpha ij}\leq 1\}] \\
		&\quad = \sup \delta V[\{X\in\mathscr X(N):\spt X\subset U_\alpha,\,g(X,X)\leq 1\}]<\infty.
	\end{align*}
	Therefore, we can apply the representation theorem \cite[2.5.12]{MR41:1976} in combination with Lusin's theorem \cite[2.3.6]{MR41:1976} to obtain a Borel map $k_\alpha$ on $U_\alpha$ taking values in the dual space $(\mathbf R^n)^*$ of $\mathbf R^n$ such that $\|\delta V\|$ almost everywhere, $k_{\alpha i}k_{\alpha j}g_\alpha^{ij} = 1$, and
	\begin{equation*}
		T_\alpha(X) = \int_{U_\alpha}X^i k_{\alpha i}\,\mathrm d\|\delta V\|.
	\end{equation*}
	For $\eta_\alpha:= k_{\alpha i}g_\alpha^{ij}\frac{\partial}{\partial x_\alpha^j}$, this means that $\|\delta V\|$ almost everywhere, $g(\eta_\alpha,\eta_\alpha) = 1$, and
	\begin{equation*}
		\delta V(X) = T_\alpha(X_\alpha) = \int_{U_\alpha}X_\alpha^ik_{\alpha i}\,\mathrm d\|\delta V\|=\int_{U_\alpha}g(X,\eta_\alpha)\,\mathrm d\|\delta V\|
	\end{equation*}
	whenever $X= X_\alpha^i\frac{\partial}{\partial x_\alpha^i} \in \mathscr X(U_\alpha)$. This equation uniquely determines the Borel map~$\eta_\alpha$ up to a set of $\|\delta V\|$ measure zero. Hence, we can define a $\|\delta V\|$ measurable map $\eta:N\to TN$ by requiring that
	\begin{equation*}
		\eta(x) = \eta_\alpha(x)\qquad \text{whenever $\alpha\in I$ and $x\in U_\alpha$}.
	\end{equation*}
	Now, let $X\in\mathscr X(N)$. Choose a finite covering $\{U_\alpha:\alpha\in I_0\}$ of the support of $X$ together with a subordinate partition of unity $\{\varphi_\alpha:\alpha\in I_0\}$. Then,
	\begin{equation*}
		\delta V(X) = \sum_{\alpha\in I_0}\delta V(\varphi_\alpha X) = \sum_{\alpha\in I_0}\int_{U_\alpha}g(\varphi_\alpha X,\eta_\alpha)\,\mathrm d\|\delta V\| = \int g(X,\eta)\,\mathrm d\|\delta V\|
	\end{equation*}
	which finishes the proof.
\end{proof}

\begin{lemma}[\protect{Compare \cite[4.3(5)]{MR0307015}}]\label{lem:pre:Radon_Nikodym}
	Suppose $m,n$ are positive integers, $m\leq n$, $N$ is an $n$-dimensional Riemannian manifold, $V\in \mathbf V_m(N)$, and $\|\delta V\|$ is a Radon measure.
	
	Then, $V$ has a generalised mean curvature (see \eqref{eq:intro:var:mean_curvature}).
\end{lemma}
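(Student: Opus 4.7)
The plan is to combine the integral representation of $\delta V$ from Lemma~\ref{lem:pre:representing_first_variation} with the Lebesgue decomposition of $\|\delta V\|$ with respect to $\|V\|$. The Radon-Nikodym density will provide (minus) the mean curvature on the absolutely continuous part, while the original unit vector field from Lemma~\ref{lem:pre:representing_first_variation} will serve as $\eta$ on the singular part.

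First I would invoke Lemma~\ref{lem:pre:representing_first_variation} to obtain a $\|\delta V\|$ measurable map $\eta_0:N\to TN$ with $g(\eta_0,\eta_0)\leq 1$ holding $\|\delta V\|$ almost everywhere and
\begin{equation*}
    \delta V(X) = \int g(X,\eta_0)\,\mathrm d\|\delta V\|\qquad \text{for all } X\in\mathscr X(N).
\end{equation*}
Without loss of generality (replacing $\eta_0$ by a Borel representative, whose existence follows from the construction in the proof of Lemma~\ref{lem:pre:representing_first_variation}, or from \cite[2.3.6]{MR41:1976}), I may assume that $\eta_0$ is Borel measurable on $N$. Next, since $\|V\|$ and $\|\delta V\|$ are both Radon measures on the locally compact separable metric space $N$, they are $\sigma$-finite, so the Lebesgue decomposition
\begin{equation*}
    \|\delta V\| = \|\delta V\|_{\|V\|} + \|\delta V\|_{\mathrm{sing}}
\end{equation*}
holds with $\|\delta V\|_{\|V\|}\ll\|V\|$ and $\|\delta V\|_{\mathrm{sing}}\perp\|V\|$, and the Radon-Nikodym theorem provides a non-negative Borel function $\theta$ on $N$ such that
\begin{equation*}
    \|\delta V\|_{\|V\|}(A) = \int_A \theta\,\mathrm d\|V\|\qquad \text{for every Borel set } A\subset N.
\end{equation*}

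Now I would define $H:= -\theta\,\eta_0$, which is Borel measurable hence $\|V\|$ measurable, and take $\eta$ to be the original vector field $\eta_0$ (viewed as a $\|\delta V\|_{\mathrm{sing}}$ measurable map). Splitting the integral over $\|\delta V\|$ according to the Lebesgue decomposition gives
\begin{equation*}
    \delta V(X) = \int g(X,\eta_0)\,\mathrm d\|\delta V\|_{\|V\|} + \int g(X,\eta_0)\,\mathrm d\|\delta V\|_{\mathrm{sing}} = -\int g(X,H)\,\mathrm d\|V\| + \int g(X,\eta)\,\mathrm d\|\delta V\|_{\mathrm{sing}},
\end{equation*}
which is exactly the form \eqref{eq:intro:var:mean_curvature} required by the definition of generalised mean curvature. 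The bound $g(\eta,\eta)\leq 1$ $\|\delta V\|_{\mathrm{sing}}$ almost everywhere is inherited from the corresponding bound on $\eta_0$ proved in Lemma~\ref{lem:pre:representing_first_variation}.

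The only real subtlety, and the step I would take most care with, is the measurability of the vector-valued objects. In particular, $\theta\eta_0$ must be genuinely $\|V\|$ measurable, which is why I insist on selecting a Borel representative of $\eta_0$ at the start; since $\theta$ is Borel, the product is then automatically Borel and a fortiori $\|V\|$ measurable on all of $N$. Everything else is a routine application of the Lebesgue decomposition, so once the measurability issue is handled cleanly, the proof reduces to combining Lemma~\ref{lem:pre:representing_first_variation} with the scalar Radon-Nikodym theorem.
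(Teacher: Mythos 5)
Your proof is correct. It differs from the paper's only in the measure-theoretic machinery invoked: the paper applies Federer's theory of symmetrical derivation \cite[2.8.9, 2.8.18, 2.9.7]{MR41:1976}, first reducing to the case of compact $\spt\|\delta V\|$ and then patching by a countable cover, whereas you invoke the abstract Lebesgue--Radon--Nikodym theorem directly, using that the Radon measures $\|V\|$ and $\|\delta V\|$ are $\sigma$-finite on the second-countable manifold $N$. The paper's choice matches the definition's reference to \cite[2.9.1]{MR41:1976} and presents the density as a concrete symmetric derivative of measures; yours is more elementary and sidesteps the compactness reduction. Your care in selecting a Borel representative of $\eta_0$ is the right point to flag; it is also worth noting that $H=-\theta\eta_0$ is well defined up to $\|V\|$-null sets independently of that choice: if $\eta_0=\eta_0'$ holds $\|\delta V\|$-a.e., then
\begin{equation*}
\int_{\{\eta_0\neq\eta_0'\}}\theta\,\mathrm d\|V\|=\|\delta V\|_{\|V\|}\bigl(\{\eta_0\neq\eta_0'\}\bigr)=0,
\end{equation*}
which forces $\|V\|(\{\theta>0\}\cap\{\eta_0\neq\eta_0'\})=0$ and hence $\theta\eta_0=\theta\eta_0'$ $\|V\|$-a.e.
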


\begin{proof}
	First, assume that $\spt \|\delta V\|$ is compact. Then, by \cite[2.8.9]{MR41:1976} one can apply the usual theory of symmetrical derivation (i.e. \cite[2.8.18, 2.9.7]{MR41:1976}) to the integral in Lemma~\ref{lem:pre:representing_first_variation}. The general case then follows by covering $\spt \|\delta V\|$ with countably many open sets whose closure is compact.
\end{proof}

The following lemma is the Riemannian counterpart of \cite[Lemma 2.8]{menne2017novel}.
\begin{lemma}\label{lem:pre:push_forward_under_immersion}
	Suppose $m,n$ are positive integers, $m\leq n$, $M$ is a compact $m$-dimensional connected differentiable manifold with boundary, $(N,g)$ is an $n$-dimensional Riemannian manifold, and $f:M\to N$ is a smooth proper immersion. Denote with $\mathcal H^m_g$ the $m$-dimensional Hausdorff measure on $N$ with respect to the distance induced by the metric~$g$, and denote with $\mu_{f^*g}$ the Riemannian measure on $M$ corresponding to the pull back metric $f^*g$ of $g$ along $f$.
	
	Then, there holds 
	\begin{equation} \label{eq:pre:sum_integration}
		\int_{M}k\,\mathrm d\mu_{f^*g} = \int_{N}\sum_{p\in f^{-1}(x)}k(p)\,\mathrm d\mathcal H^m_gx
	\end{equation}
	for all compactly supported continuous functions $k:M\to\mathbf R$. In particular, the push forward measure $f_\#\mu_{f^*g}$ of $\mu_{f^*g}$ under $f$ is a Radon measure on $N$ and satisfies
	\begin{equation} \label{eq:pre:immersion_measure}
		f_\#\mu_{f^*g}(B) = \int_B \mathcal H^0(f^{-1}\{x\})\,\mathrm d\mathcal H^m_g x\qquad \text{for all Borel sets $B\subset N$},
	\end{equation}	
	where $\mathcal H^0$ denotes the counting measure. Moreover, for all $x\in f[M\setminus \partial M]$, there holds
	\begin{equation} \label{eq:pre:density_immersion_measure}
		\mathbf \Theta^m(f_\#\mu_{f^*g},x) = \mathcal H^0(f^{-1}\{x\})
	\end{equation}
	and for $f_\#\mu_{f^*g}$ almost all $x$,
	\begin{equation}\label{eq:pre:tangent_planes_immersion}
		\mathrm df_p[T_pM] = \mathrm df_q[T_qM]\qquad \text{whenever $p,q\in f^{-1}\{x\}$.}
	\end{equation}
\end{lemma}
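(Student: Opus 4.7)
The plan is to reduce everything to local calculations via the immersion property of $f$. Since $f$ is a smooth immersion, each $p \in M$ has an open neighbourhood $W_p \subset M$ on which $f$ restricts to an embedding; by second countability, extract a countable cover $\{W_i\}_{i\in\mathbf N}$ of $M$ by such sets. The very definition of the pull-back metric $f^*g$ makes $f|_{W_i}:(W_i,f^*g) \to (f(W_i),g)$ a Riemannian isometry onto an open piece of a smoothly embedded $m$-submanifold of $N$, so the change-of-variables formula for Riemannian isometries yields
\begin{equation*}
	\int_{W_i} h\,\mathrm d\mu_{f^*g} = \int_{f(W_i)}\bigl(h\circ (f|_{W_i})^{-1}\bigr)\,\mathrm d\mathcal H^m_g
\end{equation*}
for every continuous $h$ compactly supported in $W_i$.

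To prove \eqref{eq:pre:sum_integration}, cover $\spt k$ with finitely many $W_{i_1},\dots,W_{i_L}$, choose a subordinate partition of unity $\{\chi_\ell\}$ with $\sum_\ell\chi_\ell\equiv 1$ on $\spt k$, apply the local formula to $h = \chi_\ell k$, and sum over $\ell$; reordering the resulting double sum over $\ell$ and over $p\in f^{-1}\{x\}\cap W_{i_\ell}$ (using that each $f|_{W_{i_\ell}}$ is injective and that $\sum_\ell\chi_\ell(p) = 1$ for $p\in\spt k$) yields the stated identity. Equation \eqref{eq:pre:immersion_measure} then follows from \eqref{eq:pre:sum_integration} applied to $k = \mathbf 1_{f^{-1}(B)}$ by a standard monotone approximation by continuous functions, together with $f_\#\mu_{f^*g}(B) = \mu_{f^*g}(f^{-1}(B))$. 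For \eqref{eq:pre:density_immersion_measure}, fix $x\in f[M\setminus\partial M]$; properness of $f$ makes $f^{-1}\{x\}$ compact and the immersion property makes it discrete, so $f^{-1}\{x\} = \{p_1,\dots,p_k\}$ is finite and lies in the interior of $M$. Pick pairwise disjoint open $V_j\ni p_j$ on which $f$ is an embedding; properness ensures $f^{-1}(\bar B_r(x)) \subset V_1\cup\dots\cup V_k$ for all small $r>0$, and then
\begin{equation*}
	\frac{f_\#\mu_{f^*g}(\bar B_r(x))}{\boldsymbol\alpha(m)r^m} = \sum_{j=1}^k \frac{\mathcal H^m_g\bigl(f(V_j)\cap \bar B_r(x)\bigr)}{\boldsymbol\alpha(m)r^m}\xrightarrow[r\to 0+]{} k,
\end{equation*}
since each $f(V_j)$ is a smooth $m$-submanifold through $x$ and hence has density $1$ there.

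The main obstacle is \eqref{eq:pre:tangent_planes_immersion}, which I would establish by controlling, for each pair $i\ne j$, the exceptional set
\begin{equation*}
	E_{ij} := \{x\in f(W_i)\cap f(W_j) : T_x f(W_i)\neq T_x f(W_j)\}.
\end{equation*}
The bad set in \eqref{eq:pre:tangent_planes_immersion} is contained in $\bigcup_{i\ne j} E_{ij}$, so it suffices to show $\mathcal H^m_g(E_{ij}) = 0$ for each pair. Fix $x\in E_{ij}$ and work in a chart around $x$ in which $f(W_i) = \{y = 0\}$ under a splitting $\mathbf R^n = \mathbf R^m\times\mathbf R^{n-m}$; parametrise $f(W_j)$ near $x$ by a smooth $\phi:U\to\mathbf R^n$ with $U\subset\mathbf R^m$ open, and set $F := \pi_y\circ\phi$. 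Then $\phi(w)\in f(W_i)$ iff $F(w) = 0$, while $T_{\phi(w)}f(W_j) = T_{\phi(w)}f(W_i)$ iff $\mathrm dF_w = 0$. Hence $E_{ij}$ is locally the image under $\phi$ of $\{w\in U : F(w) = 0,\,\mathrm dF_w\ne 0\}$, which by the implicit function theorem is covered by countably many $C^1$ submanifolds of $U$ of dimension at most $m-1$, and is therefore $\mathcal H^m_g$-negligible.
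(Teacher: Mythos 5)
Your proof is correct, and for \eqref{eq:pre:sum_integration}, \eqref{eq:pre:immersion_measure}, \eqref{eq:pre:density_immersion_measure} it follows the same local-embedding-plus-partition-of-unity scheme as the paper; the only real difference there is that you invoke the identification of $\mu_{f^*g}$ on an embedded piece with the ambient Hausdorff measure $\mathcal H^m_g$ as a known fact, whereas the paper proves it from scratch by comparing the intrinsic distance $d_M$ with the ambient distance $d_N$ in submanifold charts. Where you genuinely depart from the paper is the proof of \eqref{eq:pre:tangent_planes_immersion}. The paper's argument is measure-theoretic: assuming two sheets through $x$ have distinct tangent planes, it builds a cone $C$ transverse to the second sheet, integrates in geodesic polar coordinates on the first sheet to show $\mathcal H^m_g\llcorner f[E]$ has positive lower density at $x$, observes $f[E]$ misses the second sheet near $x$, deduces $\mathbf\Theta^m_*(\mathcal H^m_g\llcorner f[M],x)>1$, and finally appeals to Federer's density theorem 2.10.19(5) to conclude the exceptional set is null. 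Your argument instead linearises: in a chart flattening the first sheet, you parametrise the second by $\phi$ with normal component $F$, note that the tangent planes disagree exactly where $F=0$ but $\mathrm dF\neq 0$, and use the implicit function theorem to place this set inside a finite union of $(m-1)$-dimensional $C^1$ hypersurfaces of $U$, whose smooth image is $\mathcal H^m_g$-null. This is shorter, more elementary, and avoids the cone/polar-coordinate computation entirely; the only standard facts it uses are the implicit function theorem and that Lipschitz images of $(m-1)$-dimensional sets are $\mathcal H^m$-null. One shared point to be careful about: your derivation of \eqref{eq:pre:density_immersion_measure} for $x\in f[M\setminus\partial M]$ asserts that the whole fibre $f^{-1}\{x\}$ lies in the interior of $M$, which the hypothesis does not force (one preimage could be interior while another lies on $\partial M$, contributing density $\tfrac12$ rather than $1$); the paper's proof carries the same tacit assumption, so this is not a defect relative to the paper, but it deserves an explicit remark or a strengthened hypothesis.
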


\begin{proof}
	First, suppose that $\partial M = \varnothing$,  $M$ is a submanifold of $N$ and $f=i$ is the inclusion map. Denote with $d_M$ and $d_N$ the distance functions on $(M,i^*g)$ and $(N,g)$, respectively. Clearly, $d_M(a,b)\geq d_N(a,b)$ for all $a,b \in M$. By \cite[3.2.46]{MR41:1976}, $\mu_{i^*g}$ coincides with the $m$-dimensional Hausdorff measure $\mathcal H^m_{i^*g}$ on $M$ corresponding to the distance $d_M$. Thus, $i_\#\mu_{i^*g}(S) = \mathcal H^m_{i^*g}(M\cap S) \geq \mathcal{H}^m_g(M\cap S)$ for all $S\subset N$. To prove the reverse inequality, let $\lambda>1$ and $p\in M$. Choose an open neighbourhood $U$ of $p$ in $N$ together with a submanifold chart $x:U \to \mathbf R^n$, i.e. $x[M\cap U] = x[U]\cap (\mathbf R^{m}\times\{0\})$. Composing $x$ with a linear map $\mathbf R^m\times \mathbf R^{n-m}\to \mathbf R^m\times \mathbf R^{n-m}$, we may assume that $\mathrm dx_p$ maps an orthonormal basis of $T_pN$ onto an orthonormal basis of $\mathbf R^n$. In particular, $\|\mathrm dx_p\| = \|\mathrm dx^{-1}_{x(p)}\| = 1$. Hence, there exists $\rho>0$ such that $B_\rho(p) := \{q\in N:d_N(p,q)<\rho\} \subset U$, as well as $\|\mathrm dx|_{B_\rho(p)}\| \leq \sqrt{\lambda}$ and $\|\mathrm d(x|_{B_\rho(p)})^{-1}\|\leq \sqrt{\lambda}$. Thus, $x|_{B_\rho(p)}$ is Lipschitz continuous with Lipschitz constant bounded above by $\sqrt{\lambda}$. Next, choose $\rho_0>0$ such that $\rho_0<\rho$ and the convex hull of $x[M\cap B_{\rho_0}(p)]$ in $\mathbf R^m\times\{0\}$ is contained in $x[M\cap B_\rho(p)]$. Given any $a,b \in M\cap B_{\rho_0}(p)$, let 
	\begin{equation*}
		\gamma:[0,1]\to \mathbf R^m\times\{0\},\qquad \gamma(t)=(1-t)x(a) + tx(b).
	\end{equation*}
	Then, $c:=x^{-1}\circ \gamma$ is a smooth curve in $M\cap B_\rho(p)$, connecting $a$ with $b$. Therefore,
	\begin{equation*}
		d_M(a,b) \leq \int_0^1\sqrt{(i^*g)_{c}(\dot c,\dot c)}\,\mathrm dt\leq\sqrt{\lambda} \int_0^1 |\dot\gamma|\,\mathrm dt=\sqrt{\lambda}|x(a) - x(b)|\leq\lambda d_N(a,b).
	\end{equation*}	 
	This implies $\mathcal H^m_{i^*g}(M\cap S) \leq \lambda^m\mathcal{H}^m_g(M\cap S)$ for all $S\subset B_{\rho_0}(p)$. Thus,
	\begin{equation}\label{eq:pre:density_immersion_measure_special_case}
		\lim_{r\to0+}\frac{\mu_{i^*g}(M\cap \{q:d_M(p,q)\leq r\})}{r^m} = \lim_{r\to 0+}\frac{\mathcal H^m_g(M\cap \{q:d_N(p,q)\leq r\})}{r^m}
	\end{equation}
	in the sense that the left hand side exists if and only if the right hand side exists in which case both sides coincide.	Moreover, it follows $i_\#\mu_{i^*g}(B) = \mathcal H^m_{i^*g}(M\cap B) = \mathcal{H}^m_g(M\cap B)$ for all Borel sets $B\subset N$, which proves \eqref{eq:pre:immersion_measure} for the special case.
	
	Next, suppose that $f$ is an embedding and $\partial M =\varnothing$. Then, $f[M]$ is a submanifold of~$N$. Denote with $i:f[M]\to N$ the inclusion map. Then, $f:(M,f^*(i^*g)) \to (f[M],i^*g)$ is an isometry. This means $f_\#\mu_{f^*(i^*g)} = \mu_{i^*g}$ and, by the first case, 
	\begin{equation*}
		f_\#\mu_{f^*g}(B)=i_\#\mu_{i^*g}(B)=\mathcal H^m_g(f[M]\cap B)\qquad \text{for all Borel sets $B\subset N$}.
	\end{equation*}
	Hence, \eqref{eq:pre:immersion_measure} is valid if $\partial M=\varnothing$ and $f$ is an embedding. Moreover, in this case, \eqref{eq:pre:density_immersion_measure} follows from \cite[Chapter II, Corollary 5.5]{MR1390760} in combination with Equation~\eqref{eq:pre:density_immersion_measure_special_case}.
	
	Now, suppose that $f$ is an immersion and $\partial M = \varnothing$. Let $k:M\to \mathbf R$ be a continuous function with compact support $\spt k$ and choose finitely many open sets $U_1,\ldots,U_\Lambda$ whose union contains $\spt k$ such that $f|_{U_\lambda}$ is an embedding for $\lambda = 1,\ldots,\Lambda$. Pick a subordinate partition of unity $\{\varphi_\lambda\}_{\lambda=1}^\Lambda$, i.e. $\sum_{\lambda=1}^\Lambda\varphi_\lambda(p)=1$ for all $p\in\spt k$, and $\spt\varphi_\lambda \subset U_\lambda$ for $\lambda=1,\ldots,\Lambda$. Given any $x\in f[M]$, then, by injectivity of $f|_{U_\lambda}$ for $\lambda =1,\ldots,\Lambda$, the union
	\begin{equation*}
		\bigcup_{p\in f^{-1}\{x\}}\{\lambda: p\in U_\lambda\}
	\end{equation*} 
	is disjoint. In particular, denoting with $\chi_A$ the characteristic function of any given set $A$,
	\begin{equation*}
		\sum_{\lambda = 1}^\Lambda\varphi_\lambda\bigl((f|_{U_\lambda})^{-1}(x)\bigr)k\bigl((f|_{U_\lambda})^{-1}(x)\bigr)\chi_{f[U_\lambda]}(x) = \sum_{p\in f^{-1}\{x\}}\sum_{\lambda = 1}^\Lambda\varphi_\lambda(p)k(p)= \sum_{p\in f^{-1}\{x\}}k(p).
	\end{equation*}
	Hence, using \eqref{eq:pre:immersion_measure} for the special case,
	\begin{align*}
		\int_M k\,\mathrm d\mu_{f^*g} & =\sum_{\lambda = 1}^\Lambda\int_{U_\lambda} \varphi_\lambda \cdot k\,\mathrm d\mu_{f^*g} =\sum_{\lambda = 1}^\Lambda \int_{f[U_\lambda]}  [\varphi_\lambda \circ(f|_{U_\lambda})^{-1}]\cdot [k \circ (f|_{U_\lambda})^{-1}]\,\mathrm d f_\#\mu_{f^*g}\\
		&=\int_N \sum_{p\in f^{-1}\{x\}}k(p) \,\mathrm d\mathcal H^m_g x.
	\end{align*} 
	This proves \eqref{eq:pre:sum_integration} which readily implies \eqref{eq:pre:immersion_measure}. It remains to mention that if $\partial M \neq \varnothing$, then we have that $\partial(\partial M) = \varnothing$ and $f|_{\partial M}$ is an immersion. By the first cases it follows that $\mathcal H^{m-1}_g(f[\partial M]\cap K) < \infty$ for all compact sets $K\subset N$. Thus, $\mathcal H^m_g(f[\partial M]) = 0$.
	
	To prove \eqref{eq:pre:density_immersion_measure}, we first assume that $f$ is an embedding. Then, the statement follows from \cite[Chapter II, Corollary 5.5]{MR1390760} in combination with Equation~\eqref{eq:pre:density_immersion_measure_special_case}. If $f$ is an immersion let $x\in f[M\setminus\partial M]$ and let $p_1,\ldots,p_k$ be distinct points such that $f^{-1}\{x\}=\{p_1,\ldots,p_k\}$. Choose pairwise disjoint open sets $U_1,\ldots,U_k$ such that for $i=1,\ldots,k$ there holds $p_i\in U_i$. Then, for small $r>0$, there holds
	\begin{equation*}
		f_\#\mu_{f^*g}(B_r(x)) = \sum_{i=1}^k(f|_{U_i})_\#\mu_{(f|_{U_i})^*g}(B_r(x)).
	\end{equation*}
	Hence, \eqref{eq:pre:density_immersion_measure} follows from the special case by linearity of the limit operator.
	
	To prove \eqref{eq:pre:tangent_planes_immersion}, suppose $x\in f[M\setminus \partial M]$, $p_1,p_2 \in f^{-1}\{x\}$, and $U_1,U_2$ are disjoint open neighbourhoods of $p_1,p_2$ in $M$, respectively, such that $f|_{U_1},f|_{U_2}$ are embeddings. Assume that $\mathrm df_{p_1}[T_{p_1}M] \neq \mathrm df_{p_2}[T_{p_2}M]$. Then, we can pick a unit vector $v_1\in T_xN$ such that $v_1\in \mathrm df_{p_1}[T_{p_1}M]\setminus\mathrm df_{p_2}[T_{p_2}M]$ and there exists $0<\varepsilon<1$ such that the cone
	\begin{equation*}
		C:=\{w\in T_xN:|rw-v_1|_g\leq\varepsilon\text{ for some $r\in\mathbf R$}\}
	\end{equation*}
	satisfies $C\cap \mathrm df_{p_2}[T_{p_2}M] = \varnothing$. Next, we pick $\varepsilon_1>0$ such that $\exp_{p_1}:B_{\varepsilon_1}\subset T_{p_1}M\to M$ is a diffeomorphism on $B_{\varepsilon_1}:=\{\xi\in T_{p_1}M:(f^*g)_{p_1}(\xi,\xi)<\varepsilon_1^2\}$ and introduce polar coordinates
	\begin{equation*}
		\Theta:(0,\varepsilon_1)\times S^{m-1} \to M, \qquad \Theta(t,u) = \exp_{p_1}(tu),
	\end{equation*}
	where $S^{m-1} := \{\xi\in T_{p_1}M:(f^*g)_{p_1}(\xi,\xi)=1\}$, as well as the density function
	\begin{equation*}
		\theta:(0,\varepsilon_1)\times S^{m-1} \to \mathbf R, \qquad \theta(t,u) = t^{m-1}\sqrt{\det (f^*g)_{ij}(\Theta(t,u))}.
	\end{equation*}
	By \cite[Chapter II, Lemma 5.4]{MR1390760}, there holds $\mu_{\Theta^*(f^*g)} = \theta\mu_{g_0}$, where $g_0$ is the canonical product metric on $(0,\varepsilon_1)\times S^{m-1}$. Hence, for $E:= (\Theta\circ(\mathrm df_{p_1})^{-1})[C]$ and $u_1:=(\mathrm df_{p_1})^{-1}(v_1)$, we have by Fubini's theorem
	\begin{equation*}
		\mu_{f^*g}(E\cap B_\rho(p_1)) = \int_{S^{m-1}\cap B_{\varepsilon}(u_1)}\int_0^\rho\theta(t,u)\,\mathrm dt\,\mathrm d\mu_{S^{m-1}}u
	\end{equation*}
	for all $0<\rho<\varepsilon_1$. Noting that $\theta(t,u) = t^{m-1} + O(t^{m+1})$ as $t\to 0+$, it follows
	\begin{equation*}
		\mathbf \Theta^m(\mu_{f^*g}\llcorner E,p_1) = \frac{\mu_{S^{m-1}}(B_{\varepsilon}(u_1))}{\boldsymbol\alpha(m)m}>0,
	\end{equation*}
	where $\mu_{f^*g}\llcorner E$ denotes the Radon measure on $M$ given by $(\mu_{f^*g}\llcorner E)(B) = \mu_{f^*g}(E\cap B)$ for all Borel sets $B\subset M$.
	Hence, by \eqref{eq:pre:immersion_measure} applied to $f|_{U_1}$, there holds	$\mathbf \Theta^m_*(\mathcal H^m_g\llcorner f[E],x)>0$. Notice that if $\gamma$ is a curve in $E\cup\{p_1\}$ with $\gamma(0)=p_1$, then $(f\circ\gamma)^\cdot(0)\in C\cup\{0\}$. Moreover, we make the following observation. Choose $\rho>0$ such that $B_\rho(x)$ is a geodesic ball around $x$ in $N$. Given any unit vector $v\in T_xN$ and $\delta>0$, we denote with $C(v,\delta)$ the image of the set
	\begin{equation*}
		B_\rho(0)\cap \{w\in T_xN: |rw-v|_g<\delta \text{ for some }r\in\mathbf R\}
	\end{equation*}
	under the exponential map $\exp_x:B_\rho(0)\cap T_xN\to N$. Then, given any smooth curve $\gamma$ in $N$ with 
	\begin{equation*}
		\gamma(0)=x \qquad\text{and}\qquad\frac{\dot\gamma(0)}{|\dot\gamma(0)|_g}=v,
	\end{equation*}
	one can use normal coordinates and differentiability of $\gamma$ to show that for some $t_0>0$, there holds
	\begin{equation*}
		\gamma(t)\in C(v,\delta)\cup\{x\}\qquad\text{for all }-t_0<t<t_0.
	\end{equation*}
	This observation together with compactness of $\bar B_\rho(x)$ can be used to show that for small $\rho>0$, $f[E]\cap f[U_2]\cap B_\rho(x)=\varnothing$. Thus, $\mathbf \Theta^m_*(\mathcal H^m_g\llcorner f[M],x)>1$. Now, the conclusion follows from \cite[2.10.19(5)]{MR41:1976}.
\end{proof}

The following example is the Riemannian counterpart to the Euclidean case \cite[Definition 2.14]{menne2017novel}. Compare also with \cite[Section~2.2]{MR2928715}, where smoothness of $f$ is replaced with $W^{2,2}$-regularity. 
\begin{example}\label{ex:submanifolds_are_varifolds}
	Let $f,M,N$ be as in Lemma~\ref{lem:pre:push_forward_under_immersion}. Define $V\in \mathbf V_m(N)$ by letting
	\begin{equation*}
		V(k) = \int_N \sum_{p\in f^{-1}\{x\}}k(x,\mathrm df_p[T_pM])\,\mathrm d\mathcal H^m_gx 
	\end{equation*}
	for all continuous functions $k:\mathbf G_m(N)\to \mathbf R$ with compact support. In view of Lemma~\ref{lem:pre:push_forward_under_immersion}, we have
	\begin{equation*}
		\|V\| = f_\#\mu_{f^*g}, \qquad \spt\|V\| = \mathrm{closure } f[M].
	\end{equation*}
	In particular, $\spt\|V\| = f[M]$ if $M$ is closed. Moreover, for all $x\in f[M\setminus\partial M]$,
	\begin{equation*}
		\mathbf \Theta^m(\|V\|,x) = \mathcal H^0(f^{-1}\{x\})
	\end{equation*}
	and 
	\begin{equation*}
		\|V\|(N) = \int_M1\,\mathrm d\mu_{f^*g}=|M|.
	\end{equation*}
	
	Identify $\mathrm df_p[T_pM]$ with $T_pM$. Let $NM$ be the normal bundle of the immersion $f$. That is, there exists $\pi:NM\to M$ such that for each $p\in M$, the fibre $\pi^{-1}(p)$ is given by the orthogonal complement of $T_pM$ in $T_{f(p)}N$. Denote with $H_f:M\to NM$ the mean curvature vector field of $f$, i.e. the trace of the second fundamental form (see \cite[Definition 3.1]{MR365424}). Define the $\|V\|$ measurable map $H:N\to TN$ by  
	\begin{equation*}
		H(x) = 
		\begin{cases}
			\frac{1}{\mathbf\Theta^m(\|V\|,x)}\sum_{p\in f^{-1}\{x\}}H_f(p)&\text{if $\mathbf\Theta^m(\|V\|,x)>0$}\\
			0&\text{if $\mathbf\Theta^m(\|V\|,x)=0$}.
		\end{cases}
	\end{equation*}
	Let $X\in\mathscr X(N)$. By a simple computation (see \cite[Lemma 3.2(i)]{MR365424}),
	\begin{equation*}
		\Div_{T_pM}X(x) = -g_{x}(X(x),H_f(p)) + \Div (X\circ f)^t(p)
	\end{equation*}
	whenever $p\in M$ and $f(p)=x$, where $(X\circ f)^t$ denotes the orthogonal projection of $(X\circ f)$ onto the tangent bundle $TM$. Integrating this equation and using Lemma~\ref{lem:pre:push_forward_under_immersion} as well as the usual Divergence Theorem on $M$ (see \cite[Chapter II, Theorem 5.11]{MR1390760}), we infer
	\begin{align*}
		\delta V(X) &=  \int_N\sum_{p\in f^{-1}\{x\}}\Div_{T_pM}X(x)\,\mathrm d\mathcal H^m_gx\\
		&=-\int_N\sum_{p\in f^{-1}\{x\}}g_x(X(x),H_f(p))\,\mathrm d\mathcal H^m_gx +\int_N\sum_{p\in f^{-1}\{x\}}\Div(X\circ f)^t(p)\,\mathrm d\mathcal H^m_gx\\
		&=-\int_Ng(X,H)\mathbf\Theta^{m}(\|V\|,\cdot)\,\mathrm d\mathcal H^m_g + \int_M\Div(X\circ f)^t\,\mathrm d\mu_{f^*g}\\		
		&=-\int_N g(X,H)\,\mathrm d\|V\| + \int_{\partial M} ({f|_{\partial M}}^*g)\bigl((X\circ f)^t,\nu\bigr)\,\mathrm d\mu_{{f|_{\partial M}}^*g},
	\end{align*}
	where $\nu$ is the outward unit normal vector field on $\partial M$. In particular, $V$ has generalised mean curvature $H$, 
	\begin{equation*}
		\|\delta V\|(B) \leq \int_B |H|_g\,\mathrm d\|V\| + \int_B \mathcal H^0((f|_{\partial M})^{-1}\{x\})\,\mathrm d\mathcal H^{m-1}_g
	\end{equation*}
	for all Borel sets $B\subset N$, and by \eqref{eq:pre:tangent_planes_immersion},
	\begin{equation*}
		H(x)\bot T\qquad \text{for $V$ almost all $(x,T)\in\mathbf G_m(N)$}.
	\end{equation*}
	By definition of $H$, we have trivially
	\begin{equation*}
		\int_N|H|_g\,\mathrm d\|V\|\leq\int_N\sum_{p\in f^{-1}\{x\}}|H_f(p)|_g\,\mathrm d\mathcal H^m_gx = \int_M|H_f|_g\,\mathrm d\mu_{f^*g} 
	\end{equation*}
	and 
	\begin{align*}
		\int_N|H|_g^2\,\mathrm d\|V\| &\leq \int_N\frac{1}{\mathcal H^0(f^{-1}\{x\})^2}\Biggl(\sum_{p\in f^{-1}\{x\}}|H_f(p)|_g\Biggr)^2\,\mathrm d\|V\|x\\
		&\leq \int_N\frac{1}{\mathcal H^0(f^{-1}\{x\})}\sum_{p\in f^{-1}\{x\}}|H_f(p)|_g^2\,\mathrm d\|V\|x \\
		&= \int_N\sum_{p\in f^{-1}\{x\}}|H_f(p)|^2_g\,\mathrm d\mathcal H^m_gx = \int_M|H_f|^2_{g}\,\mathrm d\mu_{f^*g}.
	\end{align*}
\end{example}

\begin{lemma}[\protect{See \cite[Theorem 6.4.3]{MR3469435}}]\label{lem:pre:Rauch}
	Suppose $(N,g)$ is a Riemannian manifold, $p\in N$, $U$ is a geodesically star-shaped open neighbourhood of $p$, the metric is represented in geodesic polar coordinates $g = \mathrm dr\otimes \mathrm dr + g_r$ on $U$, $b\in \mathbf R$, the sectional curvature satisfies $K\leq b$ on $U$, and either $b\leq0$ or $U\subset B_{\frac{\pi}{\sqrt{b}}}(p)$. 
	
	Then, the Hessian $\nabla^2r$ of $r$ can be bounded below on $U$ by
	\begin{equation*}
	\nabla^2r\geq 
	\begin{cases}
		\sqrt{b}\cot(\sqrt{b}r)g_r&\text{if $b>0$} \\
		\sqrt{-b}\coth(\sqrt{-b}r)g_r&\text{if $b\leq 0$}.
	\end{cases}
	\end{equation*}
\end{lemma}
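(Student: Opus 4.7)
The plan is to reduce the statement to a scalar Sturm comparison applied to the norm of a Jacobi field along a radial geodesic. The argument is entirely standard: since $r$ is the distance to $p$ and $U$ is swept out by minimising geodesics from $p$, $\nabla r = \partial_r$ is a unit vector field whose integral curves are geodesics, so $\nabla_{\partial_r}\partial_r = 0$ and hence $\nabla^2 r(\partial_r,\cdot) = 0$. Since the claimed inequality compares $\nabla^2 r$ to $g_r$, which annihilates the radial direction, it suffices to verify $\nabla^2 r(X,X) \geq \sqrt{b}\cot(\sqrt{b}r)\,g(X,X)$ (or the $\coth$ analogue) for $X \in T_qN$ orthogonal to $\partial_r$ at an arbitrary point $q = \exp_p(r_0 v) \in U\setminus\{p\}$.

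Next, I would express the Hessian through Jacobi fields. Because $\exp_p$ is a diffeomorphism onto $U$, there are no conjugate points of $p$ in $U$, and the Jacobi equation along $\gamma(t) := \exp_p(tv)$ has a unique solution $J$ with $J(0)=0$ and $J(r_0)=X$; since $J(0)$ and $J'(0)$ are perpendicular to $\gamma'(0)$, $J$ stays perpendicular to $\gamma'$. Such $J$ arises from a variation of $\gamma$ through geodesics emanating from $p$, giving $[J,\partial_r]=0$ along $\gamma$, and therefore $\nabla_J \partial_r = \nabla_{\partial_r}J$ on $\gamma$. Consequently
\begin{equation*}
  \nabla^2 r(X,X) = g(\nabla_X \partial_r, X) = g(\nabla_{\partial_r}J, J)\bigr|_{t=r_0} = \tfrac{1}{2}\bigl(|J|^2\bigr)'(r_0) = u(r_0)\,u'(r_0),
\end{equation*}
where $u(t):=|J(t)|_g$.

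The final step is a scalar Sturm comparison. Differentiating $u^2 = g(J,J)$ twice and using the Jacobi equation $J'' = -R(J,\gamma')\gamma'$ together with $J\perp\gamma'$ gives $(uu')' = |J'|^2 - K(J,\gamma')\,u^2$; combined with the Cauchy--Schwarz bound $|J'|^2 \geq (u')^2$ and the curvature hypothesis $K \leq b$, this yields the differential inequality $u'' \geq -b\,u$ on $(0,r_0]$. Let $s_b$ denote the solution of $s_b''+b\,s_b=0$ with $s_b(0)=0$ and $s_b'(0)=u'(0) = |J'(0)|_g$; so $s_b(t) = |J'(0)|_g\,\sin(\sqrt{b}\,t)/\sqrt{b}$ if $b>0$ and $s_b(t) = |J'(0)|_g\,\sinh(\sqrt{-b}\,t)/\sqrt{-b}$ if $b\leq 0$. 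Under the hypothesis $U\subset B_{\pi/\sqrt{b}}(p)$ (for $b>0$), both $u$ and $s_b$ are positive on $(0,r_0]$, so $(u's_b - u s_b')' = u'' s_b - u s_b'' \geq 0$; since $u'(0)s_b(0) - u(0)s_b'(0) = 0$, we conclude $u'/u \geq s_b'/s_b$ on $(0,r_0]$. Evaluating at $r_0$ gives
\begin{equation*}
  \frac{\nabla^2 r(X,X)}{|X|_g^2} = \frac{u'(r_0)}{u(r_0)} \geq \frac{s_b'(r_0)}{s_b(r_0)} = \begin{cases}\sqrt{b}\cot(\sqrt{b}\,r_0) & \text{if } b>0,\\ \sqrt{-b}\coth(\sqrt{-b}\,r_0) & \text{if } b\leq 0,\end{cases}
\end{equation*}
which, since $g_r(X,X) = g(X,X)$ for $X\perp\partial_r$, is exactly the asserted tensor inequality. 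The only real subtlety is controlling the singular initial data at $t=0$ in the Sturm comparison, handled by the boundary terms vanishing as above; everything else is bookkeeping.
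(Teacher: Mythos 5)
The paper does not give its own proof of Lemma~\ref{lem:pre:Rauch}; it cites the result directly from Petersen \cite[Theorem 6.4.3]{MR3469435}. Your proof is a correct and standard derivation of the Hessian comparison theorem. Your reduction to vectors orthogonal to $\partial_r$ is legitimate because both $\nabla^2 r$ and $g_r$ annihilate $\partial_r$, so the tensor inequality is equivalent to the quadratic-form inequality on $(\partial_r)^\perp$. The identity $\nabla^2 r(X,X)=g(\nabla_{\partial_r}J,J)|_{r_0}=u(r_0)u'(r_0)$ is right, the computation $(uu')'=|J'|^2-K(J,\gamma')u^2$ uses $J\perp\gamma'$ correctly, and the Cauchy--Schwarz inequality $|J'|^2\ge(u')^2$ yields $u''\ge -bu$ on the open interval where $u>0$. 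The Wronskian comparison $(u's_b-us_b')'\ge 0$ with vanishing initial data gives $u'/u\ge s_b'/s_b$, which is the desired bound; the hypotheses ($\exp_p$ a diffeomorphism on $U$, and $U\subset B_{\pi/\sqrt b}(p)$ when $b>0$) are precisely what keeps $u$ and $s_b$ positive up to $r_0$. Petersen's argument reaches the same conclusion via the Riccati equation for the shape operator of the distance spheres rather than individual Jacobi fields and a Sturm--Wronskian comparison; the two routes are equivalent, with the Riccati version working directly at the tensor level and yours being the more elementary, pointwise version. Your treatment of the singular initial condition at $t=0$ (via $u(t)=|J'(0)|t+O(t^2)$) is adequate for the Wronskian argument.
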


\begin{remark} \label{rem:Rauch}
	Define the continuous function
	\begin{equation*}
		a:[0,\pi)\to \mathbf R,\qquad a(x) = x \cot(x) 
	\end{equation*}
	where $a(0)=1$.	Using the series expansion
	\begin{equation} \label{eq:pre:rem:Rauch:series_expansion_cotx}
		\cot(x) = \frac{1}{x} - \frac{x}{3} - \frac{x^3}{45} - \ldots \qquad \text{for $0<x<\pi$}
	\end{equation}
	where all higher order terms are negative, we see that $a$ is strictly decreasing. In particular, since $\cot(\frac{\pi}{3})=\frac{1}{\sqrt{3}}$, we have
	\begin{align}
		\label{eq:pre:rem:Rauch:upper_bound_xcotx}
		x\cot x &\leq 1 && \text{for $0\leq x<\pi$},\\ \label{eq:pre:rem:Rauch:lower_bound_pithird}
		x \cot x&\geq \frac{1}{2} && \text{for $0\leq x\leq\frac{\pi}{3}$},\\ \label{eq:pre:rem:Rauch:lower_bound_zero}
		x \cot x&>0 && \text{for $0\leq x < \frac{\pi}{2}$}.
	\end{align}
\end{remark}

\begin{lemma} \label{lem:pre:Rauch_application}
	Suppose $m,n$ are positive integers, $m\leq n$, $(N,g)$ is an $n$-dimensional Riemannian manifold, $p \in N$, $U$ is a geodesically star-shaped open neighbourhood of $p$, $b>0$, the sectional curvature satisfies $K\leq b$ on $U$, and $U\subset B_{\frac{\pi}{\sqrt{b}}}(p)$.
	
	Then, writing $r = d(p,\cdot)$, there holds 
	\begin{equation*}
		\Div_T(r\nabla r) \geq m \sqrt{b}r\cot(\sqrt{b}r) 
	\end{equation*}
	for all $T\in \mathbf G_m(TU)$.
\end{lemma}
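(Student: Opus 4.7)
The plan is to expand $\Div_T(r\nabla r)$ using the product rule for the Levi-Civita connection, identify one term as a tangential gradient squared and the other as a trace of $\nabla^2 r$, and then apply Lemma~\ref{lem:pre:Rauch} together with the elementary bound $x\cot x \le 1$ from Remark~\ref{rem:Rauch}.

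Concretely, fix $T \in \mathbf G_m(T_qU)$ with orthonormal basis $\{e_1,\ldots,e_m\}$. Using $\nabla_{e_i}(r\nabla r) = (e_i\cdot r)\nabla r + r\nabla_{e_i}\nabla r$ and the fact that $e_i\cdot r = g(\nabla r, e_i)$, I get
\begin{equation*}
\Div_T(r\nabla r) = \sum_{i=1}^m g(\nabla r, e_i)^2 + r\sum_{i=1}^m (\nabla^2 r)(e_i,e_i) = |\nabla^{\top}r|_g^2 + r\operatorname{tr}_T(\nabla^2 r),
\end{equation*}
where $\nabla^{\top}r$ denotes the orthogonal projection of $\nabla r$ onto $T$. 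Since $U \subset B_{\pi/\sqrt{b}}(p)$, Lemma~\ref{lem:pre:Rauch} gives $\nabla^2 r \ge \sqrt{b}\cot(\sqrt{b}r)\,g_r$ on $U$, and using $g_r(e_i,e_i) = 1 - g(\nabla r, e_i)^2$ (because $g = \mathrm dr\otimes \mathrm dr + g_r$ and $|e_i|_g^2=1$) yields
\begin{equation*}
\operatorname{tr}_T(\nabla^2 r) \ge \sqrt{b}\cot(\sqrt{b}r)\bigl(m - |\nabla^{\top}r|_g^2\bigr).
\end{equation*}

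Combining these two displays, I obtain
\begin{equation*}
\Div_T(r\nabla r) \ge m\sqrt{b}\,r\cot(\sqrt{b}r) + |\nabla^{\top}r|_g^2\bigl(1 - \sqrt{b}\,r\cot(\sqrt{b}r)\bigr).
\end{equation*}
The conclusion now follows because the second summand is non-negative: the factor $|\nabla^{\top}r|_g^2$ is clearly non-negative, and by \eqref{eq:pre:rem:Rauch:upper_bound_xcotx} applied with $x = \sqrt{b}\,r \in [0,\pi)$ we have $\sqrt{b}\,r\cot(\sqrt{b}r)\le 1$.

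There is essentially no obstacle here; the only minor subtlety is verifying the identification of the first term with $|\nabla^{\top}r|_g^2$ and correctly using that the metric splits orthogonally as $\mathrm dr\otimes\mathrm dr + g_r$ so that the Hessian bound can be read off componentwise. The point $r=0$ (i.e.\ $q=p$) is harmless since $r\nabla r = \tfrac{1}{2}\nabla(r^2)$ extends smoothly across $p$ and both sides of the claimed inequality vanish there.
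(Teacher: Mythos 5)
Your argument is correct and follows essentially the same route as the paper's proof: both rest on $\nabla(r\nabla r) = \mathrm dr\otimes\mathrm dr + r\nabla^2 r$, Lemma~\ref{lem:pre:Rauch}, and the bound $x\cot x \le 1$ from \eqref{eq:pre:rem:Rauch:upper_bound_xcotx}. The paper first derives the bilinear-form inequality $\nabla(r\nabla r)\ge \sqrt{b}\,r\cot(\sqrt{b}r)\,g$ and then traces over $T$, whereas you trace first and estimate afterwards (which makes the slack term $|\nabla^{\top}r|_g^2\bigl(1-\sqrt{b}\,r\cot(\sqrt{b}r)\bigr)$ explicit), but the underlying computation is identical.
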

\begin{proof}
	Writing the metric in polar coordinates $g = \mathrm dr\otimes \mathrm dr + g_r$ and using Lemma~\ref{lem:pre:Rauch} in combination with \eqref{eq:pre:rem:Rauch:upper_bound_xcotx}, we compute for $b>0$ 
	\begin{equation*}
		\nabla(r\nabla r) = \mathrm dr\otimes \mathrm dr + r\nabla^2r \geq \mathrm dr\otimes \mathrm dr +  \sqrt{b}r\cot(\sqrt{b}r)g_r \geq \sqrt{b}r\cot(\sqrt{b}r)g.
	\end{equation*}
	Given any $T\in \mathbf G_m(TU)$ with orthonormal basis $\{e_1,\ldots,e_m\}$, it follows
	\begin{equation*}
		\Div_T(r\nabla r) =\sum_{i=1}^m\nabla(r\nabla r)(e_i,e_i) \geq \sqrt{b}r\cot(\sqrt{b}r) \sum_{i=1}^mg(e_i,e_i) = m \sqrt{b}r\cot(\sqrt{b}r)
	\end{equation*}
	which concludes the proof.
\end{proof}

\section{Monotonicity inequalities}\label{sec:monotonicity_inequalities}
In this section, we prove several monotonicity inequalities. The version for 2-dimensional varifolds on general Riemannian manifolds (Lemma~\ref{lem:mon-ine:positive_curvature}) will be used to prove existence and upper semi-continuity of the density (see Theorem~\ref{thm:mon-ine:existence_density} in this section), the diameter pinching (see Theorem~\ref{thm:diameter_pinching} and Section~\ref{subsec:proof_diameter_pinching} for its proof), and part~\eqref{itme:Li-Yau:positive_curvature} of the Li--Yau inequality (see Theorem~\ref{thm:Li-Yau_inequality} and Section~\ref{subsec:Li-Yau} for its proof). The version for 2-dimensional varifolds on non-positively curved manifolds (Lemma~\ref{lemma:mon-ine:negative_curvature}) will be used to prove part~\eqref{itme:Li-Yau:negative_curvature} of the Li--Yau inequality. The version for general $m$-dimensional varifolds (Lemma~\ref{lem:mon-ine:m-dimensional}) is needed to prove the Sobolev inequality (see Theorem~\ref{thm:Sobolev_inequality} and Section~\ref{subsec:proof:Sobolev_inequality} for its proof).

The proof of the following Lemma is based on the ideas of the monotonicity formula in \textsc{Simon}~\cite{MR1243525} in combination with a technique of \textsc{Anderson}~\cite{MR679768}. See also~\cite[Lemma~A.3]{MR3008339} for a proof in the presence of boundary, and \cite{MR3148123} for higher dimensional varifolds.
\begin{lemma}\label{lem:mon-ine:positive_curvature}
	Suppose $n$ is an integer, $n\geq2$, $(N,g)$ is an $n$-dimensional Riemannian manifold, $p\in N$, $V\in \mathbf V_2(N)$ has generalised mean curvature $H$, $H$ is square integrable with respect to $\|V\|$, $H(x)\bot T$ for $V$ almost all $(x,T)\in \mathbf G_2(N)$, $b>0$,  $0<\rho<\frac{\pi}{\sqrt{b}}$, the sectional curvature satisfies $K\leq b$ on $\spt\|V\|\cap B_\rho(p)$, $U$ is a geodesically star-shaped open neighbourhood of $p$, and  $\spt \|V\| \cap \bar B_\rho(p) \subset U$.
	
	Then, writing $r=d(p,\cdot)$, there holds
	\begin{align*}
		\frac{\|V\|B_\sigma(p)}{\sigma^2} & \leq \frac{\|V\|B_\rho(p)}{\rho^2} + \frac{1}{16}\int_{B_\rho(p)\setminus B_\sigma(p)}|H|_g^2\,\mathrm d\|V\| + \int_{B_\rho(p)}\frac{1 - a_b(r)}{r^2}\,\mathrm d\|V\|\\
		&\qquad + \int_{B_\rho(p)}\frac{1}{2r}\,\mathrm d\|\delta V\|_{\mathrm{sing}} + \int_{B_\rho(p)}\frac{r}{2\rho^2}\,\mathrm d\|\delta V\|_{\mathrm{sing}}\\
		&\qquad + \int_{B_\sigma(p)}\frac{|H|_g}{2\sigma}\,\mathrm d\|V\| + \int_{B_{\rho}}\frac{|H|_g}{2\rho}\,\mathrm d\|V\|
	\end{align*}
	for all $0<\sigma<\rho$, where $a_b(r) = \sqrt{b}r\cot(\sqrt{b}r)$.
\end{lemma}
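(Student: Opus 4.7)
The plan is to follow Simon's approach from \cite[Lemma~1.1]{MR1243525} combined with Anderson's trick of replacing the Euclidean position vector $x-p$ by the radial vector field $r\nabla r$. Concretely, I would test the first variation of $V$ against
\begin{equation*}
X := \gamma(r)\,r\,\nabla r,\qquad \gamma(r):=\bigl(\max\{r,\sigma\}^{-2}-\rho^{-2}\bigr)_+,
\end{equation*}
smoothly mollified at the corners $r=\sigma$ and $r=\rho$ to produce an admissible sequence of $C^1_c$ vector fields; since the relevant mass integrals are absolutely continuous in $\sigma$ and $\rho$, the limit passage is routine. Because $\spt\|V\|\cap\bar B_\rho(p)\subset U$ lies in a geodesically star-shaped neighbourhood with sectional curvature $\le b$, Lemma~\ref{lem:pre:Rauch} (equivalently Lemma~\ref{lem:pre:Rauch_application}) applies throughout the support of $X$.

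A direct computation via the product rule $\Div_T X = r\gamma'(r)|\nabla^\top r|^2 + \gamma(r)\Div_T(r\nabla r)$, combined with the polar decomposition $g=\mathrm dr\otimes \mathrm dr + g_r$ (so that $\mathrm{tr}_T g_r = 2-|\nabla^\top r|^2$) and the Hessian bound $r\nabla^2 r\ge a_b(r)g_r$, gives
\begin{equation*}
\Div_T X \ge 2\gamma(r)a_b(r) + \bigl[\gamma(r)(1-a_b(r)) + r\gamma'(r)\bigr]|\nabla^\top r|^2.
\end{equation*}
Using $1-a_b(r)\ge 0$ by \eqref{eq:pre:rem:Rauch:upper_bound_xcotx} and $r\gamma'(r)\le 0$, I would drop the non-negative piece and bound $|\nabla^\top r|^2\le 1$ in the negative piece to arrive at $\Div_T X\ge\psi(r)-2\gamma(r)(1-a_b(r))$ with $\psi(r):=2\gamma(r)+r\gamma'(r)$. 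A piecewise evaluation yields $\psi\equiv 2(\sigma^{-2}-\rho^{-2})$ on $B_\sigma$, $\psi\equiv -2\rho^{-2}$ on $B_\rho\setminus B_\sigma$, and $\psi\equiv 0$ elsewhere, whence $\int\psi\,\mathrm d\|V\|=2\bigl(\|V\|(B_\sigma)/\sigma^2-\|V\|(B_\rho)/\rho^2\bigr)$. Inserting $X$ into the first variation formula, bounding $|g(X,\cdot)|\le|X|_g=\gamma r$ against both $H$ and $\eta$, and dividing by $2$ gives the preliminary inequality
\begin{equation*}
\frac{\|V\|(B_\sigma)}{\sigma^2}\le \frac{\|V\|(B_\rho)}{\rho^2}+\int\gamma(1-a_b)\,\mathrm d\|V\|+\tfrac{1}{2}\int\gamma r|H|_g\,\mathrm d\|V\|+\tfrac{1}{2}\int\gamma r\,\mathrm d\|\delta V\|_{\mathrm{sing}}.
\end{equation*}

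It then remains to match each of the last three integrals to the seven terms in the stated claim. From $\gamma(r)\le r^{-2}$ on $B_\rho$, the curvature-correction term is controlled by $\int_{B_\rho}(1-a_b)/r^2\,\mathrm d\|V\|$. The pointwise estimate $\gamma r/2\le 1/(2r)+r/(2\rho^2)$ on $B_\rho$, which follows from $r/(2\sigma^2)\le 1/(2r)$ for $r\le\sigma$ together with the exact identity $\gamma r/2 = 1/(2r)-r/(2\rho^2)$ on the annulus, yields the two $\|\delta V\|_{\mathrm{sing}}$ terms. For the mean-curvature integral, the trivial bound $\gamma r/2\le 1/(2\sigma)$ on $B_\sigma$ produces $\int_{B_\sigma}|H|_g/(2\sigma)\,\mathrm d\|V\|$, while on the annulus I would split $\gamma r/2 = 1/(2\rho) + A(r)$ with $A(r):=(1/(2r)-r/(2\rho^2)-1/(2\rho))$ and apply Young's inequality in the form $A|H|_g\le|H|_g^2/16 + 4A^2$ to peel off the $|H|_g/(2\rho)$ and Willmore $|H|_g^2/16$ contributions, absorbing the residual $4A^2$ into the curvature correction $(1-a_b(r))/r^2$ already present on the right-hand side. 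The principal obstacle is precisely this last Young/absorption bookkeeping: one must choose the splitting and the Young constant so that the residue $4A^2$ genuinely fits under $(1-a_b(r))/r^2$ with the stated numerical constants $1/16$ and $1/(2\rho)$. All remaining steps amount to routine smooth mollification of the corner cutoff $\gamma$ and standard first-variation manipulation.
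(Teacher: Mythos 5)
Your choice of test field $X=\gamma(r)r\nabla r$ with $\gamma=(r_\sigma^{-2}-\rho^{-2})_+$ is the right one; the paper arrives at the same weight by testing with $\varphi(r/t)r\nabla r$ and integrating in $t$, so these are equivalent. The genuine gap lies in your step ``bound $|\nabla^\top r|^2\le 1$ in the negative piece.'' At that moment you discard the $|\nabla^\bot r|^2$ structure, and with it the only mechanism that can produce the Willmore term $\tfrac{1}{16}|H|_g^2$ with a residue controlled by the other terms on the right. The paper keeps the tangential/normal split (its final pre-estimate contains $-\int_{\pi^{-1}[B_\rho\setminus B_\sigma]}\frac{|\nabla^\bot r|_g^2}{r^2}\,\mathrm dV$) and then uses the orthogonality hypothesis $H\bot T$ together with the exact algebraic identity
\begin{equation*}
\Bigl|\tfrac{1}{4}H + \tfrac{\nabla^\bot r}{r}\Bigr|_g^2 = \tfrac{1}{2r}g(\nabla r,H) + \tfrac{|\nabla^\bot r|_g^2}{r^2} + \tfrac{1}{16}|H|_g^2,
\end{equation*}
so that the dangerous annular integrand $\tfrac{1}{2r}g(\nabla r,H)$ combines with $\tfrac{|\nabla^\bot r|^2}{r^2}$ into a dropped nonnegative square plus exactly $\tfrac{1}{16}|H|_g^2$. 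This is a cancellation, not an absorption, and it has no Young-inequality surrogate.

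Concretely, your proposed absorption $4A^2\le \frac{1-a_b(r)}{r^2}$ with $A(r)=\tfrac{1}{2r}-\tfrac{r}{2\rho^2}-\tfrac{1}{2\rho}$ is false in general. The curvature correction satisfies $\frac{1-a_b(r)}{r^2}\to 0$ as $b\to0$ (indeed $1-x\cot x\sim x^2/3$), whereas $4A^2$ is independent of $b$ and bounded away from $0$ on the annulus; for instance $4A^2(\rho)=1/\rho^2$, which already exceeds $\frac{1-a_b(\rho)}{\rho^2}$ whenever $a_b(\rho)>0$, i.e. for any $\rho<\frac{\pi}{2\sqrt b}$. Moreover $4A^2\sim 1/r^2$ as $r\to\sigma\to 0$, while $\frac{1-a_b(r)}{r^2}$ stays bounded. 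So the residue cannot fit under the curvature term with any choice of Young constant, and the preliminary inequality you wrote down (after the $|\nabla^\top r|^2\le 1$ bound) is genuinely weaker than the claimed one. To repair the argument, keep the $[\gamma(1-a_b)+r\gamma']|\nabla^\bot r|^2$ piece, note that on the annulus $r\gamma'=-2/r^2$ so this piece contributes at least $+\frac{|\nabla^\bot r|^2}{r^2}$, and then pair it with the $\tfrac{1}{2r}g(\nabla r,H)$ part of $\delta V(X)$ on the annulus via the completing-the-square identity. The remaining terms ($\tfrac{r}{2\sigma^2}|H|$ on $B_\sigma$, $\tfrac{r}{2\rho^2}|H|$ on $B_\rho$, and the $\|\delta V\|_{\mathrm{sing}}$ terms) are then handled by the Cauchy--Schwarz bounds you already describe.
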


\begin{proof}
	Given any $\sigma < t < \rho$ and any non-negative smooth function $\varphi:\mathbf R\to \mathbf R$ whose support is contained in the interval $(-\infty,1)$, we let $X = \varphi(\frac{r}{t})r\nabla r$ and compute
	\begin{equation*}
		\Div_T X = \varphi\Bigl(\frac{r}{t}\Bigr)\Div_T(r\nabla r) + \dot\varphi\Bigl(\frac{r}{t}\Bigr)\frac{r}{t}|\nabla^Tr|_g^2
	\end{equation*}	
	for all $T\in\mathbf G_2(TN)$, where $\nabla^Tr$ denotes the orthogonal projection of $\nabla r$ onto $T$. We write
	\begin{equation*}
		\nabla^\bot r:\mathbf G_2(N)\to TN,\qquad (\nabla^\bot r)(x,T) = (\nabla r) (x)- (\nabla^Tr)(x),
	\end{equation*}
	and notice that
	\begin{equation*}
		1 = |\nabla r|^2_g = |\nabla^T r|^2_g + |\nabla^\bot r|^2_g. 
	\end{equation*}
	Therefore, testing the first variation equation (see \eqref{eq:intro:var:mean_curvature}) with $X$, we infer by Lemma~\ref{lem:pre:Rauch_application}
	\begin{align*}
		&2\int_N \varphi\Bigl(\frac{r}{t}\Bigr)a_b(r)\,\mathrm d\|V\| + \int_{\mathbf G_2(N)}\dot\varphi\Bigl(\frac{r}{t}\Bigr)\frac{r}{t}\Bigl[1 - |\nabla^\bot r|_g^2\Bigr]\,\mathrm dV \\
		&\qquad \leq -\int_N\varphi\Bigl(\frac{r}{t}\Bigr)g(r\nabla r,H)\,\mathrm d\|V\| + \int_N\varphi\Bigl(\frac{r}{t}\Bigr)g(r\nabla r,\eta)\,\mathrm d\|\delta V\|_{\mathrm{sing}}.
	\end{align*}
	There holds
	\begin{equation*}
		-\frac{d}{\mathrm dt}\Bigl[\frac{1}{t^2}\varphi\Bigl(\frac{r}{t}\Bigr)\Bigr] = \frac{1}{t^3}\Bigl[2\varphi\Bigl(\frac{r}{t}\Bigr) + \dot\varphi\Bigl(\frac{r}{t}\Bigr)\frac{r}{t}\Bigr].
	\end{equation*}
	Hence, adding $\int_N2\varphi(\frac{r}{t})(1 - a_b(r))\,\mathrm d\|V\|$ on both sides of the inequality and multiplying with $\frac{1}{t^3}$, it follows
	\begin{equation}\label{eq:mon-ine:pos-cur:before_integrating}
		\begin{split} 
			&-\frac{d}{\mathrm dt}\int\frac{1}{t^2}\varphi\Big(\frac{r}{t}\Big)\,\mathrm d\|V\| - \int_{\mathbf G_2(N)}\dot\varphi\Bigl(\frac{r}{t}\Bigr)\frac{r}{t^4}|\nabla^\bot r|_g^2\,\mathrm dV \\ 
			&\qquad \leq 2\int_N\varphi\Bigl(\frac{r}{t}\Bigr)\frac{1 - a_b(r)}{t^3}\,\mathrm d\|V\|  -\int_N\varphi\Bigl(\frac{r}{t}\Bigr)\frac{g(r\nabla r,H)}{t^3}\,\mathrm d\|V\| \\
			&\quad\qquad+ \int_N\varphi\Bigl(\frac{r}{t}\Bigr)\frac{g(r\nabla r,\eta)}{t^3}\,\mathrm d\|\delta V\|_{\mathrm{sing}}.
		\end{split}
	\end{equation}
	Given any $0<\lambda < 1$, choose $\varphi$ such that $\dot \varphi \leq 0$, $\varphi(s)=1$ for all $s\leq \lambda$, and $\varphi(s) = 0$ for all $s\geq1$. In other words, $\varphi$ approaches the characteristic function of the open interval $(-\infty,1)$ from below as $\lambda \to 1$. In particular, if $\dot\varphi(\frac{r}{t}) \neq 0$, then $\frac{r}{t}\geq \lambda$. Hence,
	\begin{equation} \label{eq:mon-ine:pos-cur:swapping_limit}
		\begin{split} 
			\int_{\mathbf G_2(N)}\dot\varphi\Bigl(\frac{r}{t}\Bigr)\frac{r}{t^4}|\nabla^\bot r|_g^2\,\mathrm dV & \leq \int_{\mathbf G_2(N)}\dot\varphi\Bigl(\frac{r}{t}\Bigr)\frac{r}{t^2}\frac{\lambda^2}{r^2}|\nabla^\bot r|_g^2\,\mathrm dV\\
			& = -\frac{d}{\mathrm dt} \int_{\mathbf G_2(N)}\varphi\Bigl(\frac{r}{t}\Bigr)\frac{\lambda^2}{r^2}|\nabla^\bot r|_g^2\,\mathrm dV.
		\end{split}
	\end{equation}
	Moreover, given any $\|V\|$ integrable real valued function $f$, one computes using Fubini's theorem, writing $r_\sigma := \max\{\sigma,r\}$, and denoting with $\chi_A$ the characteristic function of any set $A$,
	\begin{equation}\label{eq:mon-ine:pos-cur:Fubini} 
		\begin{split} 
			&\int_\sigma^\rho\int_{B_t(p)}\frac{f(x)}{t^3}\,\mathrm d\|V\|x\,\mathrm dt=\int_{B_\rho(p)}\int_\sigma^\rho\frac{f(x)}{t^3}\chi_{\{r<t\}}(x)\,\mathrm dt\,\mathrm d\|V\|x\\ 
			&\qquad=\int_{B_\rho(p)}f(x)\int_{r_\sigma(x)}^\rho\frac{1}{t^3}\,\mathrm dt\mathrm d\|V\|x=\frac{1}{2}\int_{B_\rho(p)}\Bigl(\frac{1}{r_\sigma^2}- \frac{1}{\rho^2}\Bigr)f\,\mathrm d\|V\|
		\end{split}
	\end{equation}
	Therefore, putting \eqref{eq:mon-ine:pos-cur:swapping_limit} into \eqref{eq:mon-ine:pos-cur:before_integrating}, integrating with respect to $t$ from $\sigma$ to $\rho$, letting $\lambda\to1$, and using \eqref{eq:mon-ine:pos-cur:Fubini}, we infer
	\begin{align*}
		\frac{\|V\|B_\sigma(p)}{\sigma^2}&\leq\frac{\|V\|B_\rho(p)}{\rho^2}-\int_{\pi^{-1}[B_\rho(p)\setminus B_\sigma(p)]}\frac{|\nabla^\bot r|_g^2}{r^2}\,\mathrm dV \\
		&\quad + \int_{B_\rho(p)}\Bigl(\frac{1}{r_\sigma^2} - \frac{1}{\rho^2}\Bigr)(1 - a_b(r))\,\mathrm d\|V\|-\frac{1}{2}\int_{B_\rho(p)}\Bigl(\frac{1}{r_\sigma^2} - \frac{1}{\rho^2}\Bigr)g(r\nabla r, H)\,\mathrm d\|V\|\\
		&\quad + \frac{1}{2}\int_{B_\rho(p)}\Bigl(\frac{1}{r_\sigma^2} - \frac{1}{\rho^2}\Bigr)g(r\nabla r, \eta)\,\mathrm d\|\delta V\|_{\mathrm{sing}},
	\end{align*}
	where $\pi:\mathbf G_2(N) \to N$ denotes the canonical projection. Observe that 
	\begin{equation*}
		\Bigl|\frac{1}{4}H + \frac{\nabla^\bot r}{r}\Bigr|_g^2 = \frac{1}{2r}g(\nabla r,H) + \frac{|\nabla^\bot r|^2_g}{r^2} + \frac{1}{16}|H|_g^2.
	\end{equation*}
	Thus, it follows
	\begin{align*}
		&\frac{\|V\|B_\sigma(p)}{\sigma^2}\leq\frac{\|V\|B_\rho(p)}{\rho^2} - \int_{\pi^{-1}[B_\rho(p)\setminus B_\sigma(p)]}\Bigl|\frac{1}{4}H + \frac{\nabla^\bot r}{r}\Bigr|_g^2\,\mathrm dV + \frac{1}{16}\int_{B_\rho(p)\setminus B_\sigma(p)}|H|^2_g\,\mathrm d\|V\|\\
		&\qquad +\int_{B_\sigma(p)}\frac{1-a_b(r)}{\sigma^2}\,\mathrm d\|V\| + \int_{B_\rho(p)\setminus B_\sigma(p)}\frac{1-a_b(r)}{r^2}\,\mathrm d\|V\| - \int_{B_\rho(p)}\frac{1-a_b(r)}{\rho^2}\,\mathrm d\|V\|\\
		&\qquad - \int_{B_\sigma(p)}\frac{g(r \nabla r,H)}{2\sigma^2}\,\mathrm d\|V\| + \int_{B_\rho(p)}\frac{g(r\nabla r,H)}{2\rho^2}\,\mathrm d\|V\| \\
		&\qquad + \int_{B_\sigma(p)}\frac{r}{2\sigma^2}\,\mathrm\|\delta V\|_{\mathrm{sing}} + \int_{B_\rho(p)\setminus B_\sigma(p)}\frac{1}{2r}\,\mathrm d\|\delta V\|_{\mathrm{sing}} + \int_{B_\rho(p)}\frac{r}{2\rho^2}\,\mathrm d\|\delta V\|_{\mathrm{sing}}
	\end{align*}
	which, in view of \eqref{eq:pre:rem:Rauch:upper_bound_xcotx}, implies the conclusion.
\end{proof}

\begin{remark}\label{rem:mon-ine:pos-cur}
	In the above proof, we let $\varphi$ approach the characteristic function of the interval $(-\infty,1)$ from below. If we instead let $\varphi$ approach the characteristic function of the interval $(-\infty,1]$ from above, we obtain the following version for closed balls:
	\begin{equation}\label{eq:rem:mon-ine:pos-cur:closed_balls}
		\begin{split} 
			\frac{\|V\|\bar B_\sigma(p)}{\sigma^2} & \leq \frac{\|V\|\bar B_\rho(p)}{\rho^2} + \frac{1}{16}\int_{\bar B_\rho(p)}|H|_g^2\,\mathrm d\|V\| + \int_{\bar B_\rho(p)}\frac{1 - a_b(r)}{r^2}\,\mathrm d\|V\|\\
			&\qquad + \int_{\bar B_\rho(p)}\frac{1}{r}\,\mathrm d\|\delta V\|_{\mathrm{sing}}\\
			&\qquad + \int_{\bar B_\sigma(p)}\frac{|H|_g}{2\sigma}\,\mathrm d\|V\| + \int_{\bar B_{\rho}}\frac{|H|_g}{2\rho}\,\mathrm d\|V\|
		\end{split}
	\end{equation}
	for all $0<\sigma<\rho$.
	
	Define the function 
	\begin{equation*}
		c:[0,\pi) \to \mathbf R,\qquad c(x) = \frac{1 - x\cot x}{x^2}.
	\end{equation*}
	Then, using the series expansion of $\cot(x)$ (see~\eqref{eq:pre:rem:Rauch:series_expansion_cotx}), we obtain the series expansion for~$c$:
	\begin{equation*}
		c(x) = \frac{1}{3} + \frac{x^2}{45} + \ldots 
	\end{equation*}
	with all higher order terms being positive. In particular, $c(0)=\frac{1}{3}$ and $c$ is strictly increasing. Since $c(\frac{\pi}{2}) = \frac{4}{\pi^2}$, the curvature depending term in Lemma~\ref{lem:mon-ine:positive_curvature} can be estimated by
	\begin{equation} \label{eq:rem:mon-ine:pos-cur}
		\int_{B_\rho(p)}\frac{1-a_b(r)}{r^2}\,\mathrm d\|V\|\leq \frac{4}{\pi^2} b \|V\|(B_\rho(p))\leq b\|V\|(B_\rho(p))
	\end{equation}
	whenever $0<\rho < \frac{\pi}{2\sqrt{b}}$. Analogously for closed balls:
	\begin{equation} \label{eq:rem:mon-ine:pos-cur:bound_integral_closed_balls}
		\int_{\bar B_\rho(p)}\frac{1-a_b(r)}{r^2}\,\mathrm d\|V\|\leq \frac{4}{\pi^2} b \|V\|(\bar B_\rho(p))\leq b\|V\|(\bar B_\rho(p))
	\end{equation}
	whenever $0<\rho < \frac{\pi}{2\sqrt{b}}$.
\end{remark}

The following lemma is the Riemannian analogue of Corollary~4.5 and Remark~4.6 in~\cite{MR3528825}.
\begin{lemma}\label{lem:mon-ine:m-dimensional}
	Suppose $m,n$ are positive integers, $m\leq n$, $N$ is an $n$-dimensional Riemannian manifold, $p\in N$, $r=d(p,\cdot)$, $\rho_0>0$, $a$ is a real valued Borel function on~$B_{\rho_0}(p)$, 
	\begin{equation*}
		\Div_T(r\nabla r)\geq ma(r) \qquad \text{for all $T\in \mathbf G_m(TB_{\rho_0}(p))$},
	\end{equation*}
	$V\in \mathbf V_m(N)$, and $\|\delta V\|$ is a Radon measure.
	
	Then, there holds
	\begin{equation*}
		\frac{\|V\|(\bar B_\sigma(p))}{\sigma^m} \leq \frac{\|V\|(\bar B_\rho(p))}{\rho^m} + \int_\sigma^\rho\frac{1}{t^m}\left(\|\delta V\|(\bar B_t(p)) + m\int_{\bar B_t(p)} \frac{1-a}{r}\,\mathrm d\|V\|\right)\,\mathrm dt
	\end{equation*}
	for all $0<\sigma<\rho<\rho_0$.
\end{lemma}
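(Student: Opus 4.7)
The plan is to test the first variation with the family of vector fields $X_t := \phi(r/t)\,r\nabla r$ for $t\in(\sigma,\rho)$, where $\phi\colon\mathbf R\to[0,\infty)$ is a smooth non-increasing cutoff identically $1$ on $(-\infty,1]$ and vanishing outside $(-\infty,1+\epsilon)$. Since $r\nabla r$ extends continuously across $p$, these furnish compactly supported smooth vector fields on $B_{\rho_0}(p)$; cut-locus subtleties are handled as in the proof of Lemma~\ref{lem:mon-ine:positive_curvature}. A direct computation gives
$$\Div_T X_t = \phi(r/t)\,\Div_T(r\nabla r) + \phi'(r/t)(r/t)\,|\nabla^T r|_g^2,$$
and combining the hypothesis $\Div_T(r\nabla r)\geq m\,a(r)$ with $\phi\geq 0$, $\phi'\leq 0$, and $|\nabla^T r|_g^2\leq 1$ produces the pointwise lower bound $\Div_T X_t \geq m\phi(r/t)a(r) + (r/t)\phi'(r/t)$.

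Integrating against $V$ and invoking Lemma~\ref{lem:pre:representing_first_variation} to bound $\delta V(X_t)\leq \int|X_t|_g\,\mathrm d\|\delta V\| = \int r\phi(r/t)\,\mathrm d\|\delta V\|$ gives the master inequality
$$m\int\phi(r/t)\,a(r)\,\mathrm d\|V\| + \int(r/t)\phi'(r/t)\,\mathrm d\|V\| \leq \int r\phi(r/t)\,\mathrm d\|\delta V\|.$$
The key algebraic step is to write $m\phi(r/t)a(r) = m\phi(r/t) - m\phi(r/t)(1-a(r))$ and to recognise the identity
$$\frac{1}{t^{m+1}}\Bigl[m\phi(r/t) + (r/t)\phi'(r/t)\Bigr] = -\frac{\mathrm d}{\mathrm dt}\Bigl[\frac{\phi(r/t)}{t^m}\Bigr],$$
which converts the master inequality into the ODE-type bound
$$-\frac{\mathrm d}{\mathrm dt}\int\frac{\phi(r/t)}{t^m}\,\mathrm d\|V\|\leq \frac{m\int\phi(r/t)(1-a(r))\,\mathrm d\|V\|}{t^{m+1}} + \frac{\int r\phi(r/t)\,\mathrm d\|\delta V\|}{t^{m+1}}.$$

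Integrating in $t$ from $\sigma$ to $\rho$ and then letting $\phi\searrow\chi_{(-\infty,1]}$ from above (so that $\phi(r/t)\to\chi_{\{r\leq t\}}$, exactly as in Remark~\ref{rem:mon-ine:pos-cur}) the left-hand side collapses to $\|V\|(\bar B_\sigma(p))/\sigma^m - \|V\|(\bar B_\rho(p))/\rho^m$. On the right-hand side the pointwise estimates $r\leq t$ on $\bar B_t(p)$ and (since $1-a\geq 0$ in the regime of interest, e.g.\ after replacing $a$ by $\min\{a,1\}$, which still satisfies the divergence hypothesis) $(1-a)/t\leq (1-a)/r$ yield
$$\int_{\bar B_t(p)} r\,\mathrm d\|\delta V\|\leq t\,\|\delta V\|(\bar B_t(p)),\qquad \int_{\bar B_t(p)}(1-a)\,\mathrm d\|V\|\leq t\int_{\bar B_t(p)}\frac{1-a}{r}\,\mathrm d\|V\|,$$
which assemble to the claimed monotonicity. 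The main technical hurdle is the monotone-convergence passage $\phi\searrow\chi_{(-\infty,1]}$ under the integral sign, justified exactly as in Remark~\ref{rem:mon-ine:pos-cur}; once that is in place, the remaining work is routine algebra.
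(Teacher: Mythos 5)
Your proposal follows the paper's proof essentially step for step: the same test vector field $\varphi(r/t)\,r\nabla r$, the same estimate $\dot\varphi(r/t)\,(r/t)\,[1-|\nabla^\perp r|_g^2]\geq\dot\varphi(r/t)\,(r/t)$ using $\dot\varphi\leq0$, the same identity converting $m\varphi(r/t)+ (r/t)\dot\varphi(r/t)$ into $-t^{m+1}\frac{d}{dt}\bigl[\varphi(r/t)/t^m\bigr]$, and the same limit $\varphi\searrow\chi_{(-\infty,1]}$ followed by the pointwise bound $r\leq t$ on $\bar B_t(p)$. One small caveat: you correctly flag that the final passage from $\int(1-a)/t$ to $\int(1-a)/r$ needs $1-a\geq0$ (the paper is silent on this), but your suggested remedy of replacing $a$ by $\min\{a,1\}$ would enlarge the right-hand side and so would prove only a weaker inequality than the one stated; the cleanest reading is that $a\leq1$ is an implicit hypothesis, harmless in every application since the comparison function $\sqrt{b}\,r\cot(\sqrt{b}r)$ never exceeds $1$.
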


\begin{proof}
	We proceed similarly as in the proof of Lemma~\ref{lem:mon-ine:positive_curvature}.	Given any $\sigma < t < \rho$ and any non-negative non-decreasing smooth function $\varphi:\mathbf R\to \mathbf R$, we let $X = \varphi(\frac{r}{t})r\nabla r$ and compute
	\begin{equation*}
		\Div_T X = \varphi\Bigl(\frac{r}{t}\Bigr)\Div_T(r\nabla r) + \dot\varphi\Bigl(\frac{r}{t}\Bigr)\frac{r}{t}|\nabla^Tr|_g^2
	\end{equation*}	
	for all $T\in\mathbf G_2(TN)$, where $\nabla^Tr$ denotes the orthogonal projection of $\nabla r$ onto $T$. Writing
	\begin{equation*}
		\nabla^\bot r:\mathbf G_2(N)\to TN,\qquad (\nabla^\bot r)(x,T) = (\nabla r) (x)- (\nabla^Tr)(x),
	\end{equation*}
	there holds
	\begin{equation*}
		\delta V(X)\geq m\int_Na(r)\varphi\Bigl(\frac{r}{t}\Bigr)\,\mathrm d\|V\| + \int_{\mathbf G_m(N)}\dot\varphi\Bigl(\frac{r}{t}\Bigr)\frac{r}{t}\Bigl[1-|\nabla^\bot r|_g^2\Bigr]\,\mathrm dV.
	\end{equation*}
	Adding the term $m\int_N(1-a(r))\varphi(\frac{r}{t})\,\mathrm d\|V\|$ on both sides of the inequality, multiplying by $\frac{1}{t^{m+1}}$, and neglecting positive terms on the right hand side, we infer
	\begin{align*}
		&\frac{\delta V(X)}{t^{m+1}} + \frac{m}{t^m}\int_N\frac{1 - a(r)}{t}\varphi\Bigl(\frac{r}{t}\Bigr)\,\mathrm d\|V\|\\
		&\qquad \geq \frac{m}{t^{m+1}}\int_N\varphi\Bigl(\frac{r}{t}\Bigr)\,\mathrm d\|V\| + \frac{1}{t^m}\int_N\dot\varphi\Bigl(\frac{r}{t}\Bigr)\frac{r}{t^2}\,\mathrm d\|V\| = -\frac{d}{\mathrm dt}\left(\frac{1}{t^m}\int_N\varphi\Bigl(\frac{r}{t}\Bigr)\,\mathrm d\|V\|\right).
	\end{align*}
	Integrating the inequality with respect to $t$ from $\sigma$ to $\rho$, letting $\varphi$ approach the characteristic function of the interval $(-\infty,1]$ from above and using Lemma~\ref{lem:pre:representing_first_variation}, it follows
	\begin{equation*}
		\frac{\|V\|\bar B_\sigma(p)}{\sigma^m}\leq \frac{\|V\|\bar B_\rho(p)}{\rho^m} + \int_\sigma^\rho\frac{1}{t^m}\left(\int_{\bar B_t(p)}\frac{g(r\nabla r,\eta)}{t}\,\mathrm d\|\delta V\| + m\int_{\bar B_t(p)}\frac{1 - a(r)}{t}\,\mathrm d\|V\|\right)\mathrm dt
	\end{equation*}
	which implies the conclusion.
\end{proof}

\begin{remark}
	Suppose $b>0$ and the function $a$ is given by $a(r) = \sqrt{b}r\cot(\sqrt{b}r)$ for $0<r<\frac{\pi}{2\sqrt{b}}$. Define the function
	\begin{equation*}
		c:[0,\pi)\to \mathbf R,\qquad c(x) = \frac{1 - x\cot x}{x}
	\end{equation*}
	Then,using the series expansion of $\cot(x)$ (see \eqref{eq:pre:rem:Rauch:series_expansion_cotx}) we obtain the series expansion for~$c$:
	\begin{equation*}
		c(x) = \frac{x}{3} + \frac{x^3}{45} + \ldots 
	\end{equation*}
	with all higher order terms being positive. In particular, $c(0) = 0$ and $c$ is strictly increasing. Since $c(\frac{\pi}{2}) = \frac{2}{\pi}$, the curvature depending term in Lemma~\ref{lem:mon-ine:m-dimensional} can be estimated by
	\begin{equation}\label{eq:mon-ine:rem:m-dim}
		m\int_{\bar B_t(p)}\frac{1 - a(r)}{r}\,\mathrm d\|V\|\leq m\sqrt{b}\|V\|\bar B_t(p)
	\end{equation}
	whenever $0<t<\frac{\pi}{2\sqrt{b}}$.
\end{remark}

The following lemma is a consequence of Lemma~\ref{lem:mon-ine:m-dimensional}. It can also be derived directly from the first variation formula, see \cite[Theorem 5.5]{scharrer:MSc}.
\begin{lemma} \label{lem:mon-ine:point_measure_zero}
	Suppose $m,n$ are integers, $2\leq m \leq n$, $N$ is an $n$-dimensional Riemannian manifold, $V \in \mathbf V_m(N)$, and $\|\delta V\|$ is a Radon measure. 
	
	Then, there holds $\|V\|\{p\} = 0$ for all $p\in N$.
\end{lemma}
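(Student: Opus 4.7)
The strategy is to apply Lemma~\ref{lem:mon-ine:m-dimensional} on a small geodesic ball around the given point $p$ and let the inner radius $\sigma$ tend to zero. First I would fix a radius $\rho_0>0$ and a constant $b>0$ such that $B_{\rho_0}(p)$ is geodesically star-shaped, the sectional curvature satisfies $K\leq b$ on $B_{\rho_0}(p)$, and $\rho_0<\pi/(2\sqrt{b})$; all of this is possible by choosing $\rho_0$ sufficiently small. Lemma~\ref{lem:pre:Rauch_application} then gives the divergence lower bound $\Div_T(r\nabla r)\geq m\,a(r)$ on $B_{\rho_0}(p)$ with $a(r)=\sqrt{b}\,r\cot(\sqrt{b}\,r)$, which is precisely the hypothesis required to invoke Lemma~\ref{lem:mon-ine:m-dimensional}.

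Fixing $\rho\in(0,\rho_0)$ and applying that lemma, I would obtain, for $0<\sigma<\rho$,
\[
\|V\|(\bar B_\sigma(p))\leq \sigma^{m}\,\frac{\|V\|(\bar B_\rho(p))}{\rho^{m}}+\sigma^{m}\!\int_\sigma^\rho\frac{1}{t^{m}}\Bigl(\|\delta V\|(\bar B_t(p))+m\!\int_{\bar B_t(p)}\tfrac{1-a(r)}{r}\,\mathrm d\|V\|\Bigr)\mathrm dt.
\]
The plan is to show that each of the three resulting terms tends to $0$ as $\sigma\to 0+$. The first is immediate. For the second, the Radon hypothesis gives $\|\delta V\|(\bar B_t(p))\leq \|\delta V\|(\bar B_\rho(p))<\infty$, so the integral is controlled by a constant multiple of $\sigma^{1-m}$ and, after multiplying by the prefactor $\sigma^{m}$, yields a quantity of order $\sigma$. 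For the third, the series expansion $1-x\cot x=x^{2}/3+O(x^{4})$ from Remark~\ref{rem:Rauch} shows that $(1-a(r))/r$ extends continuously (with value $0$) to $r=0$ and is therefore bounded on $\bar B_\rho(p)$; the same $O(\sigma)$ bound then applies. The hypothesis $m\geq 2$ is essential precisely because it makes the outer $\sigma^{m}$ dominate the $\sigma^{1-m}$ blow-up of the integrals.

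Letting $\sigma\to 0+$, I conclude $\lim_{\sigma\to 0+}\|V\|(\bar B_\sigma(p))=0$; since $\|V\|$ is a Radon measure and $\bar B_\sigma(p)\downarrow\{p\}$, continuity from above gives $\|V\|\{p\}=0$. The argument is essentially bookkeeping in the exponents of $\sigma$, so no serious obstacle arises; the only delicate point is verifying that the curvature-dependent integrand $(1-a(r))/r$ is bounded, which follows cleanly from the power series of $x\cot x$.
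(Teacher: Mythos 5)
Your proof is correct and follows essentially the same route as the paper's: both apply Lemma~\ref{lem:mon-ine:m-dimensional} together with Lemma~\ref{lem:pre:Rauch_application} on a small geodesic ball around $p$ and then control the right-hand side as $\sigma\to 0+$. The only cosmetic difference is that the paper multiplies the monotonicity inequality by $\sigma^{m-1}$ and concludes $\mathbf\Theta^{*1}(\|V\|,p)<\infty$ (using a Fubini computation for the $\|\delta V\|$ term), whereas you multiply by $\sigma^m$ and show each term is $O(\sigma)$; both yield $\|V\|\{p\}=0$.
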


\begin{proof}
	Similarly as in \eqref{eq:mon-ine:pos-cur:Fubini}, one can use Fubini's theorem to compute
	\begin{equation*}
		\int_\sigma^\rho\frac{1}{t^m}\|\delta V\|(\bar B_t(p))\,\mathrm dt = \frac{1}{m-1}\int_{\bar B_\rho(p)} \frac{1}{r_\sigma^{m-1}} - \frac{1}{\rho^{m-1}}\,\mathrm d\|\delta V\| \leq\int_{\bar B_\rho(p)}\frac{1}{r_\sigma^{m-1}}\,\mathrm d\|\delta V\|,
	\end{equation*} 	
	where $r_\sigma = \max\{r,\sigma\}$, and $r = d(p,\cdot)$. For small $\rho>0$ we can apply Lemma~\ref{lem:pre:Rauch_application} for some $b>0$ in combination with Lemma~\ref{lem:pre:representing_first_variation}, Lemma~\ref{lem:mon-ine:m-dimensional}, and \eqref{eq:rem:mon-ine:pos-cur:bound_integral_closed_balls} to infer
	\begin{equation*}
		\frac{\|V\|\bar B_\sigma(p)}{\sigma^m} \leq \frac{\|V\|\bar B_\rho(p)}{\rho^m} + \int_{\bar B_\rho(p)}\frac{1}{r_\sigma^{m-1}}\,\mathrm d\|\delta V\| + m\int_\sigma^\rho\frac{1}{t^{m-1}} b \|V\|(\bar B_t(p))\,\mathrm dt.
	\end{equation*}
	Multiplying with $\sigma^{m-1}$ implies
	\begin{equation*}
		\frac{\|V\|\bar B_\sigma(p)}{\sigma} \leq \frac{\|V\|\bar B_\rho(p)}{\rho} + \|\delta V\|(\bar B_\rho(p)) + m\rho b \|V\|(\bar B_\rho(p))<\infty.
	\end{equation*}
	Now, only the left hand side depends on $\sigma$. Hence, $\mathbf \Theta^{*1}(\|V\|,p)<\infty$ which implies $\|V\|\{p\}=0$.
\end{proof}

The proof of the following theorem is based on the Euclidean version in the appendix of~\cite{MR2119722}. See also \cite[Corollary~5.8]{MR3524220} for the existence of the density. 
\begin{theorem}\label{thm:mon-ine:existence_density}
	Suppose $n$ is an integer, $n\geq2$, $N$ is an $n$-dimensional Riemannian manifold, $V\in\mathbf V_2(N)$ has generalised mean curvature $H$, $H(x) \bot T$ for $V$ almost all $(x,T)\in\mathbf G_2(N)$, and $H$ is locally square integrable with respect to $\|V\|$.
	
	Then, for all $p\in N\setminus \spt\|\delta V\|_{\mathrm{sing}}$, there holds:	
	\begin{enumerate}
		\item \label{itme:mon-ine:ex-den:density} The density $\mathbf \Theta^2(\|V\|,p)$ exists and is finite.
		\item \label{itme:mon-ine:ex-den:semi-continuity} The function $\mathbf\Theta^2(\|V\|,\cdot)$ is upper semi-continuous at $p$.
	\end{enumerate}
\end{theorem}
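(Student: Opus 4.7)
The plan is to apply Lemma~\ref{lem:mon-ine:positive_curvature} together with the curvature estimate \eqref{eq:rem:mon-ine:pos-cur} on small balls centred at $p$, exploiting that the two $\|\delta V\|_{\mathrm{sing}}$-terms vanish because $p \notin \spt\|\delta V\|_{\mathrm{sing}}$. I would first pick $b > 0$, a geodesically star-shaped open neighbourhood $U$ of $p$ with $\sup_U K \leq b$, and $\rho_0 > 0$ small enough that $\bar B_{\rho_0}(p) \subset U \cap B_{\pi/(2\sqrt b)}(p)$ and $\|\delta V\|_{\mathrm{sing}}(\bar B_{\rho_0}(p)) = 0$. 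For all $0 < \sigma < \rho \leq \rho_0$ the monotonicity inequality then reads
\begin{equation*}
    \frac{\|V\|B_\sigma(p)}{\sigma^2} \leq \frac{\|V\|B_\rho(p)}{\rho^2} + \frac{1}{16}\int_{B_\rho(p)}|H|_g^2\,\mathrm d\|V\| + \frac{4b}{\pi^2}\|V\|B_\rho(p) + I_\sigma + I_\rho,
\end{equation*}
where $I_t := \int_{B_t(p)}|H|_g/(2t)\,\mathrm d\|V\|$.

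For part~\eqref{itme:mon-ine:ex-den:density} the strategy is a bootstrap. Young's inequality yields $I_t \leq \tfrac{1}{2}\int_{B_t(p)}|H|_g^2\,\mathrm d\|V\| + \tfrac{1}{8}\|V\|B_t(p)/t^2$; applying this to $I_\sigma$ and $I_\rho$ and absorbing the resulting $\tfrac{1}{8}\|V\|B_\sigma(p)/\sigma^2$ into the left-hand side leaves a positive coefficient and a right-hand side that is finite and independent of $\sigma$ once $\rho$ is fixed, giving $\mathbf\Theta^{*2}(\|V\|,p) < \infty$. With the crude bound $\|V\|B_t(p) \leq Ct^2$ for $t \leq \rho_0$, Cauchy--Schwarz then sharpens the estimate to
\begin{equation*}
    I_t \leq \tfrac{1}{2}\sqrt{C}\left(\int_{B_t(p)}|H|_g^2\,\mathrm d\|V\|\right)^{1/2},
\end{equation*}
which tends to $0$ as $t \to 0^+$ by local $L^2$-integrability of $H$ and $\|V\|\{p\}=0$ (Lemma~\ref{lem:mon-ine:point_measure_zero}). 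Setting $\psi(\rho) := \|V\|B_\rho(p)/\rho^2 + \tfrac{1}{16}\int_{B_\rho(p)}|H|_g^2\,\mathrm d\|V\|$, the monotonicity inequality becomes $\psi(\sigma) \leq \psi(\rho) + E(\sigma,\rho)$ with $E(\sigma,\rho) \to 0$ as $\sigma, \rho \to 0^+$; hence $\limsup_{\sigma \to 0}\psi(\sigma) \leq \liminf_{\rho \to 0}\psi(\rho)$ and $\psi$ has a limit at $0$, which gives the existence and finiteness of $\mathbf\Theta^2(\|V\|,p)$.

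For the upper semi-continuity~\eqref{itme:mon-ine:ex-den:semi-continuity} I would run the same argument using the closed-ball variant \eqref{eq:rem:mon-ine:pos-cur:closed_balls}, shrinking to a neighbourhood $W$ of $p$ with $\overline W \cap \spt\|\delta V\|_{\mathrm{sing}} = \varnothing$ on which the data $(b, U, \rho_0)$ can be chosen uniformly. Passing $\sigma \to 0$ in the closed-ball monotonicity at $q$ yields, for each $q \in W$ and $0 < \rho \leq \rho_0$,
\begin{equation*}
    \pi \mathbf\Theta^2(\|V\|,q) \leq \frac{\|V\|\bar B_\rho(q)}{\rho^2} + \frac{1}{16}\int_{\bar B_\rho(q)}|H|_g^2\,\mathrm d\|V\| + \frac{4b}{\pi^2}\|V\|\bar B_\rho(q) + \frac{1}{2\rho}\int_{\bar B_\rho(q)}|H|_g\,\mathrm d\|V\|.
\end{equation*}
Each term on the right is upper semi-continuous in $q$, since $\|V\|$ and $|H|_g^k\|V\|$ ($k=1,2$) are finite Radon measures on $\overline W$ and the balls are closed. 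Letting first $q \to p$ with $\rho$ fixed and then $\rho \to 0^+$ yields $\limsup_{q \to p}\mathbf\Theta^2(\|V\|,q) \leq \mathbf\Theta^2(\|V\|,p)$.

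The main technical obstacle is the pair of cross terms $I_\sigma, I_\rho$: the factors $1/\sigma$ and $1/\rho$ rule out direct control via $\int|H|_g^2\,\mathrm d\|V\|$. The bootstrap---first extracting a crude density bound by Young's inequality, then using that bound together with Cauchy--Schwarz to make the tails negligible---is the heart of the argument and is what forces \eqref{itme:mon-ine:ex-den:density} to be established before \eqref{itme:mon-ine:ex-den:semi-continuity}.
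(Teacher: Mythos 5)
Your argument is correct and follows essentially the same route as the paper: apply the two-dimensional monotonicity inequality (Lemma~\ref{lem:mon-ine:positive_curvature} or its closed-ball variant~\eqref{eq:rem:mon-ine:pos-cur:closed_balls}) with the curvature bound~\eqref{eq:rem:mon-ine:pos-cur}, kill the $\|\delta V\|_{\mathrm{sing}}$-terms by shrinking to a ball that misses $\spt\|\delta V\|_{\mathrm{sing}}$, control the mean-curvature cross terms using $L^2$-integrability plus $\|V\|\{p\}=0$ (Lemma~\ref{lem:mon-ine:point_measure_zero}), and for the upper semi-continuity exploit that Radon measures are upper semi-continuous on closed balls. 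The only cosmetic difference is how the cross terms $I_t$ are tamed: you run an explicit two-step bootstrap (Young's inequality to absorb and get $\|V\|B_t(p)\leq Ct^2$, then Cauchy--Schwarz to show $I_t\to0$), while the paper uses a single H\"older estimate $I_t\leq\sqrt{A(t)}\sqrt{W(t)}\leq(1+A(t))\sqrt{W(t)}$ and then absorbs the $o(1)A(\sigma)$ factor into the density ratio; both amount to the same bootstrap. One tiny point worth being explicit about: you work with open balls in part~\eqref{itme:mon-ine:ex-den:density} while the density is defined via closed balls, so you should note (as is elementary) that the open- and closed-ball density ratios have the same limit behaviour.
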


\begin{proof}
	By \eqref{eq:rem:mon-ine:pos-cur:bound_integral_closed_balls} in combination with Lemma~\ref{lem:mon-ine:point_measure_zero}, we have
	\begin{equation}\label{eq:mon-ine:ex-den:decay_area}
		\int_{\bar B_\rho(p)}\frac{1 - a_b(r)}{r^2}\,\mathrm d\|V\| = o(1) \qquad\text{as $\rho \to 0$},
	\end{equation}
	where $a_b$ is defined as in Lemma~\ref{lem:mon-ine:positive_curvature}.	We abbreviate
	\begin{equation*}
		W(t):= \int_{\bar B_t(p)}|H|^2_g\,\mathrm d\|V\|\quad \text{and}\quad A(t):=\frac{\|V\|\bar B_t(p)}{t^2}
	\end{equation*}
	for $t>0$. Using H\"older's inequality, we deduce
	\begin{equation} \label{eq:mon-ine:ex-den:estimate_total_mean_curvature}
		\int_{\bar B_t(p)} \frac{|H|_g}{2t}\,\mathrm d\|V\|\leq \sqrt{A(t)}\sqrt{W(t)}\leq (1+A(t))\sqrt{W(t)}.
	\end{equation}
	Moreover, since $H$ is locally square integrable,
	\begin{equation} \label{eq:mon-ine:ex-den:decay_Willmore}
		W(t) = o(1) \qquad\text{as $t\to 0$}.
	\end{equation}
	Choose $\rho_0>0$ small enough such that $B_{\rho_0}(p)\cap\spt\|\delta V\|_{\mathrm{sing}}=\varnothing$ and such that Lemma~\ref{lem:mon-ine:positive_curvature} can be applied for some $b>0$. Then, there holds
	\begin{equation*}
		\int_{\bar B_\rho(p)}\frac{1}{2r}\,\mathrm d\|\delta V\|_{\mathrm{sing}} = \int_{\bar B_\rho(p)}\frac{r}{2\rho^2}\,\mathrm d\|\delta V\|_{\mathrm{sing}} = 0
	\end{equation*}
	for all $0<\rho<\rho_0$. Hence,	putting \eqref{eq:mon-ine:ex-den:decay_area}, \eqref{eq:mon-ine:ex-den:estimate_total_mean_curvature}, and \eqref{eq:mon-ine:ex-den:decay_Willmore} into \eqref{eq:rem:mon-ine:pos-cur:closed_balls}, we infer
	\begin{equation*}
		(1 - o_\sigma(1))A(\sigma) - o_\sigma(1) \leq (1+o_\rho(1))A(\rho) + o_\rho(1)
	\end{equation*}
	for all $0<\sigma<\rho<\rho_0$. Applying $\limsup_{\sigma\to0+}$ on the left and $\liminf_{\rho\to0+}$ on the right, it follows
	\begin{equation*}
		\mathbf \Theta^{*2}(\|V\|,p)\leq \mathbf \Theta^{2}_*(\|V\|,p)
	\end{equation*}
	which proves \eqref{itme:mon-ine:ex-den:density}. Hence, letting $\sigma \to0$ in \eqref{eq:rem:mon-ine:pos-cur:closed_balls}, and using $2\sqrt{A(t)}\sqrt{W(t)}\leq A(t) + W(t)$, we have
	\begin{equation*}
		\pi\mathbf \Theta^{2}(\|V\|,p)\leq \frac{\|V\|\bar B_\rho(p)}{\rho^2} + W(\rho) + b \|V\|(\bar B_\rho(p)) + A(\rho) + W(\rho)
	\end{equation*}
	for small $0<\rho<\rho_0$. It follows
	\begin{equation}\label{eq:mon-ine:ex-den:limsup_lower}
		\begin{split}
			&\limsup_{q\to p}\frac{\|V\|\bar B_\rho(q)}{\rho^2} \\ 
			&\quad \geq \limsup_{q\to p}\pi\mathbf\Theta^2(\|V\|,q) - W(2\rho) - b \|V\|(\bar B_{2\rho}(p)) - A(2\rho) - W(2\rho) \\
			&\quad =\limsup_{q\to p}\pi\mathbf\Theta^2(\|V\|,q) - o_\rho(1).
		\end{split}
	\end{equation}
	On the other hand,
	\begin{equation} \label{eq:mon-ine:ex-den:limsup_upper}
		\limsup_{q\to p}\frac{\|V\|\bar B_\rho(q)}{\rho^2}\leq \lim_{\varepsilon \to0}\frac{\|V\|\bar B_{\rho+\varepsilon}(p)}{\rho^2} = \frac{\|V\|\bar B_\rho(p)}{\rho^2}
	\end{equation}
	where we used the limit formula for the measure of decreasing sets (see \cite[2.1.3(5)]{MR41:1976}). Putting \eqref{eq:mon-ine:ex-den:limsup_lower} and \eqref{eq:mon-ine:ex-den:limsup_upper} together and taking the limit $\rho\to0$ implies statement~\eqref{itme:mon-ine:ex-den:semi-continuity}.
\end{proof}

The following lemma is a generalisation of~\cite[Equation~(10)]{MR4131799}.
\begin{lemma}\label{lemma:mon-ine:negative_curvature}
	Suppose $n$ is an integer, $n\geq2$, $(N,g)$ is an $n$-dimensional Riemannian manifold, $p\in N$, $V\in\mathbf V_2(N)$ has generalised mean curvature $H$, $H$ is square integrable with respect to $\|V\|$, $H(x)\bot T$ for $V$ almost all $(x,T)\in\mathbf G_2(N)$, $b<0$, $\rho_0>0$, the sectional curvature satisfies $K\leq b$ on $\spt \|V\|\cap \bar B_{\rho_0}(p)$, $U$ is a geodesically star-shaped open neighbourhood of $p$, and $\spt \|V\| \cap \bar B_{\rho_0}(p) \subset U$. Define the functions
	\begin{equation*}
		s_b:(0,\infty)\to\mathbf R,\qquad s_b(t) = \frac{\sinh(\sqrt{|b|}t)}{\sqrt{|b|}},
	\end{equation*}
	$c_b := s_b'$, and $\phi:=|b|/(c_b-1)$.
	
	Then, writing $r=d(p,\cdot)$, there holds
	\begin{align*}
		&2\phi(\sigma)\int_{\bar B_\sigma(p)}c_b(r)\,\mathrm d\|V\| +  |b|\|V\|(\bar B_\rho(p)\setminus \bar B_\sigma(p)) \\
		&\quad \leq 2\phi(\rho)\int_{\bar B_\rho(p)}c_b(r)\,\mathrm d\|V\| + \frac{1}{4} \int_{\bar B_\rho(p)\setminus \bar B_\sigma(p)}|H|_g^2\,\mathrm d\|V\| \\
		&\qquad -\phi(\sigma)\int_{\bar B_\sigma(p)}s_b(r)g(\nabla r,H)\,\mathrm d\|V\| +\phi(\rho)\int_{\bar B_\rho(p)}s_b(r)g(\nabla r,H)\,\mathrm d\|V\| \\
		&\qquad + \phi(\sigma)\int_{\bar B_\sigma(p)}s_b(r)g(\nabla r,\eta)\,\mathrm d\|\delta V\|_{\mathrm{sing}} - \phi(\rho)\int_{\bar B_\rho(p)}s_b(r)g(\nabla r,\eta)\,\mathrm d\|\delta V\|_{\mathrm{sing}} \\
		&\qquad +\int_{\bar B_\rho(p)\setminus \bar B_\sigma(p)}\phi(r)s_b(r)g(\nabla r,\eta)\,\mathrm d\|\delta V\|_{\mathrm{sing}} 
	\end{align*}
	for almost all $0<\sigma<\rho<\rho_0$.
\end{lemma}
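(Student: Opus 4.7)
My plan is to mirror the Euclidean monotonicity proof of Lemma~\ref{lem:mon-ine:positive_curvature}, but to use the radial vector field $s_b(r)\nabla r$ in place of $r\nabla r$, which takes advantage of the Hessian comparison $\nabla^2 r\ge (c_b(r)/s_b(r))\,g_r$ supplied by Lemma~\ref{lem:pre:Rauch} in the negatively curved regime (note $\sqrt{|b|}\coth(\sqrt{|b|}r)=c_b(r)/s_b(r)$).

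First, I would test the first variation with $X_t=\varphi(r/t)\,s_b(r)\,\nabla r$, where $\varphi:\mathbf R\to[0,1]$ is a smooth non-increasing approximation of $\chi_{(-\infty,1]}$ from above. Since $s_b'=c_b$, a direct computation using the Hessian bound gives, for every $T\in\mathbf G_2(TU)$,
\begin{equation*}
\Div_T X_t \ge \dot\varphi(r/t)\,\frac{s_b(r)}{t}\,|\nabla^T r|_g^2 + 2\varphi(r/t)\,c_b(r),
\end{equation*}
because for $m=2$ the contributions $\varphi c_b|\nabla^T r|^2$ (from $s_b'=c_b$) and $-\varphi c_b|\nabla^T r|^2$ (from $\varphi s_b\,(c_b/s_b)$) cancel. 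Combined with the first variation identity and $|\nabla^T r|_g^2=1-|\nabla^\bot r|_g^2$, this yields a pointwise-in-$t$ inequality $(\star_t)$.

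Next, I would multiply $(\star_t)$ by $\phi(t)^2 s_b(t)=-\phi'(t)$ and integrate in $t\in[\sigma,\rho]$. The elementary antiderivative
\begin{equation*}
\int_{\max(\sigma,r)}^{\rho} \phi(t)^2 s_b(t)\,dt = \phi(\max(\sigma,r))-\phi(\rho)
\end{equation*}
converts, by Fubini, the $\int\varphi c_b\,d\|V\|$ contribution into $2\phi(\sigma)\int_{\bar B_\sigma(p)} c_b\,d\|V\|$, $-2\phi(\rho)\int_{\bar B_\rho(p)} c_b\,d\|V\|$, and $2\int_{\bar B_\rho(p)\setminus \bar B_\sigma(p)} \phi(r) c_b(r)\,d\|V\|$; the $g(\nabla r,H)$ and $g(\nabla r,\eta)$ terms produce the six $\phi(\sigma)$-, $\phi(\rho)$-, and $\phi(r)$-weighted boundary integrals of the conclusion. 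Passing to the limit $\varphi\to\chi_{(-\infty,1]}$ from above, Lebesgue differentiation (valid for almost every $(\sigma,\rho)$ because $t\mapsto\|V\|(\bar B_t(p))$ is monotone) gives
\begin{equation*}
\int_\sigma^\rho \phi(t)^2 s_b(t)\,\dot\varphi(r/t)\,\frac{1}{t}\,dt \longrightarrow -\phi(r)^2 s_b(r)\,\chi_{[\sigma,\rho]}(r),
\end{equation*}
so the $|\nabla^\bot r|_g^2$ piece integrates to $-\int_{\bar B_\rho(p)\setminus \bar B_\sigma(p)} \phi(r)^2 s_b(r)^2\,|\nabla^\bot r|_g^2\,dV$.

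I would then invoke the algebraic identities $\phi(c_b-1)=|b|$ and $\phi\,s_b^2=c_b+1$, both immediate from $c_b^2-|b|s_b^2=1$. They give $\phi c_b=\phi+|b|$, $\phi(c_b+1)=2\phi+|b|$, and $\phi^2 s_b^2=\phi(c_b+1)$. Applying the pointwise Cauchy--Schwarz
\begin{equation*}
\bigl|\tfrac12 H + \phi(r)\,s_b(r)\,\nabla^\bot r\bigr|_g^2 \ge 0 \qquad \text{i.e.}\qquad -\phi s_b\,g(\nabla r,H) \le \tfrac14 |H|_g^2 + \phi(c_b+1)\,|\nabla^\bot r|_g^2
\end{equation*}
(using $g(\nabla r,H)=g(\nabla^\bot r,H)$ since $H\bot T$ for $V$-a.e.\ $(x,T)$) to the annular integrand $-\int_{\bar B_\rho(p)\setminus \bar B_\sigma(p)} \phi s_b\,g(\nabla r,H)\,d\|V\|$ produces $+\int \phi(c_b+1)\,|\nabla^\bot r|_g^2\,dV$, which cancels exactly the term from the previous step. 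The surviving $\int_{\bar B_\rho(p)\setminus \bar B_\sigma(p)} \phi(c_b+1)\,d\|V\| = 2\int \phi\,d\|V\| + |b|\,\|V\|(\bar B_\rho(p)\setminus \bar B_\sigma(p))$ then absorbs the $2\int \phi\,d\|V\|$ already present on the left and converts the $2|b|\|V\|(\bar B_\rho(p)\setminus \bar B_\sigma(p))$ there into $|b|\|V\|(\bar B_\rho(p)\setminus \bar B_\sigma(p))$, giving exactly the claimed inequality.

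The hardest step will be justifying rigorously the limit $\varphi\to\chi_{(-\infty,1]}$ for general varifolds, because the distributional derivative of $t\mapsto\|V\|(\bar B_t(p))$ need not exist at every radius. This is precisely what forces the ``almost all $0<\sigma<\rho<\rho_0$'' qualifier, and it is handled by Lebesgue differentiation together with the fact that $\|V\|(\{r=t\})>0$ for at most countably many $t$.
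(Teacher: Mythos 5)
Your argument is essentially correct, and it reaches exactly the paper's conclusion, but it takes a more circuitous route to construct the same test vector field. The paper tests the first variation once, with $X=\varphi(r)\,s_b(r)\nabla r$ for smooth $\varphi$, and then simply lets $\varphi$ approach the fixed Lipschitz cutoff $(\phi_\sigma(\cdot)-\phi(\rho))_+$, where $\phi_\sigma(t)=\phi(\max\{t,\sigma\})$; dominated convergence does the rest. You instead imitate the structure of Lemma~\ref{lem:mon-ine:positive_curvature}: you test with a scaled cutoff $\varphi(r/t)\,s_b(r)\nabla r$, then multiply by $-\phi'(t)=\phi(t)^2s_b(t)$ and integrate in $t$, using Fubini and $\int_{\max(\sigma,r)}^\rho(-\phi')\,\mathrm dt=\phi(\max(\sigma,r))-\phi(\rho)$ to synthesise the very same effective weight. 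The algebraic core is identical in both approaches — the Hessian bound $\nabla(s_b(r)\nabla r)\geq c_b(r)g$ from Lemma~\ref{lem:pre:Rauch}, the identities $\phi(c_b-1)=|b|$, $\phi\,s_b^2=c_b+1$, $-\phi'=\phi^2s_b$, and the completion of the square $\bigl|\tfrac12H+\phi s_b\nabla^\bot r\bigr|_g^2\geq0$ with $g(\nabla r,H)=g(\nabla^\bot r,H)$.

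What your extra layer of indirection costs you is precisely the subtlety you flag at the end: in your route, $\dot\varphi(r/t)$ must be pushed through a $t$-integral against $-\phi'(t)$ and you need a distributional limit $\dot\varphi\to-\delta_1$ inside a double integral involving the measure $V$, which is delicate. The paper avoids this entirely: by choosing the target $\varphi$ directly as a compactly supported Lipschitz function of $r$, it only needs a.e.\ pointwise and $L^\infty$-bounded convergence of smooth approximants $\varphi_k$ and their derivatives $\varphi_k'$, so dominated convergence closes the argument with no further ado; the restriction to ``almost all $(\sigma,\rho)$'' then comes only from the (at most countably many) atoms of $t\mapsto\|V\|(\bar B_t(p))$, exactly as you anticipate. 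One further small remark: there is a sign slip in your description of ``the $|\nabla^\bot r|_g^2$ piece'' — after accounting for the explicit minus sign in $1-|\nabla^\bot r|_g^2$, that term appears on the left with a $+$ sign as $\int\phi^2s_b^2|\nabla^\bot r|_g^2\,\mathrm dV=\int\phi(c_b+1)|\nabla^\bot r|_g^2\,\mathrm dV$, which is then cancelled by the identical term produced on the right by the completed square, as you describe. Net, your outline is sound; the paper's direct substitution of the Lipschitz cutoff is the cleaner implementation.
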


\begin{proof}
	Given any non-negative smooth function $\varphi:\mathbf R \to\mathbf R$ whose  support is contained in the open interval $(-\infty,\rho_0)$, we define the vector field $X:=\varphi(r)s_b(r)\nabla r$. Write the metric in polar coordinates $g = \mathrm dr \otimes \mathrm dr + g_r$ on $U$ and use Lemma~\ref{lem:pre:Rauch} to estimate
	\begin{equation*}
		\nabla\bigl(s_b(r)\nabla r\bigr) = c_b(r)\mathrm dr\otimes\mathrm dr +s_b(r)\nabla^2 r \geq c_b(r)\mathrm dr\otimes\mathrm dr +s_b(r)\frac{c_b(r)}{s_b(r)}g_r\geq c_b(r) g.
	\end{equation*}
	Hence,
	\begin{equation*}
		\nabla X\geq \varphi'(r)s_b(r)\mathrm dr\otimes\mathrm dr + \varphi(r)c_b(r)g
	\end{equation*}
	which implies
	\begin{equation*}
		\Div_TX \geq \varphi'(r)s_b(r)|\nabla^Tr|_g^2 + 2\varphi(r)c_b(r)  
	\end{equation*}
	for all $T\in\mathbf G_2(TU)$, where $\nabla^Tr$ denotes the orthogonal projection of $\nabla r$ onto $T$. Writing
	\begin{equation*}
		\nabla^\bot r:\mathbf G_2(N)\to TN,\qquad (\nabla^\bot r)(x,T) = (\nabla r) (x)- (\nabla^Tr)(x),
	\end{equation*}
	and testing the first variation equation (see \eqref{eq:intro:var:mean_curvature}) with $X$, we infer 
	\begin{equation}\label{eq:mon-ine:neg-cur:smooth_case}
		\begin{split} 
			&2\int_N\varphi(r)c_b(r)\,\mathrm d\|V\| + \int_{\mathbf G_2(N)}\varphi'(r)s_b(r)(1-|\nabla^\bot r|_g^2)\,\mathrm dV \\
			&\qquad\leq -\int_N\varphi(r)s_b(r)g(\nabla r,H)\,\mathrm d\|V\|+\int_N\varphi(r)s_b(r)g(\nabla r,\eta)\,\mathrm d\|\delta V\|_{\mathrm{sing}}.
		\end{split}
	\end{equation}
	Notice that the function $t\mapsto \|V\|\bar B_t(p)$ is continuous at $t_0$ if and only if $\|V\|(\{r=t_0\}) = 0$. Since the function $t\mapsto \|V\|\bar B_t(p)$ is non-decreasing, it can only have countably many discontinuity points. Choose $0<\sigma<\rho<\rho_0$ to be continuity points.
	Define the non-increasing Lipschitz function
	\begin{equation*}
		\phi_\sigma:(0,\infty)\to \mathbf R,\qquad \phi_\sigma(t) = \phi(\max\{t,\sigma\}) 
	\end{equation*}
	and let $\varphi$ approach $(\phi_\sigma(\cdot) - \phi(\rho))_+$, where $(\cdot)_+:=\max\{\cdot,0\}$. Then, by the dominated convergence theorem, \eqref{eq:mon-ine:neg-cur:smooth_case} becomes
	\begin{equation}\label{eq:mon-ine:neg-cur:nonsmooth_version}
		\begin{split} 
			&2\phi(\sigma)\int_{\bar B_\sigma(p)}c_b(r)\,\mathrm d\|V\| + 2 \int_{\bar B_\rho(p)\setminus \bar B_\sigma(p)}\phi(r)c_b(r)\,\mathrm d\|V\|\\
			&\leq 2\phi(\rho)\int_{\bar B_\rho(p)}c_b(r)\,\mathrm d\|V\|- \int_{\pi^{-1}[\bar B_\rho(p)\setminus \bar B_\sigma(p)]}\phi'(r)s_b(r)[1 - |\nabla^\bot r|^2_g]\,\mathrm d V \\
			&-\phi(\sigma)\int_{\bar B_\sigma(p)}s_b(r)g(\nabla r,H)\,\mathrm d\|V\| -\int_{\bar B_\rho(p)\setminus \bar B_\sigma(p)}\phi(r)s_b(r)g(\nabla r,H)\,\mathrm d\|V\|\\
			& +\phi(\rho)\int_{\bar B_\rho(p)}s_b(r)g(\nabla r,H)\,\mathrm d\|V\| + \phi(\sigma)\int_{\bar B_\sigma(p)}s_b(r)g(\nabla r,\eta)\,\mathrm d\|\delta V\|_{\mathrm{sing}}  \\
			& +\int_{\bar B_\rho(p)\setminus \bar B_\sigma(p)}\phi(r)s_b(r)g(\nabla r,\eta)\,\mathrm d\|\delta V\|_{\mathrm{sing}} - \phi(\rho)\int_{\bar B_\rho(p)}s_b(r)g(\nabla r,\eta)\,\mathrm d\|\delta V\|_{\mathrm{sing}}
		\end{split}
	\end{equation}
	where $\pi:\mathbf G_2(N)\to N$ is the canonical projection. We compute
	\begin{equation}\label{eq:mon-ine:neg-cur:trigonometric_identity}
		2\phi c_b + \phi's_b = \frac{|b|}{(c_b - 1)^2}\left[2c_b^2 - 2c_b - c_b's_b\right] = |b|
	\end{equation}
	as well as $\phi'(r)s_b(r) = -(\phi(r)s_b(r))^2$, and
	\begin{equation}\label{eq:mon-ine:neg-cur:Pythagoras}
		\phi'(r)s_b(r)|\nabla^\bot r|_g^2 - \phi(r)s_b(r)g(\nabla r,H) = -\Bigl|\phi(r)s_b(r)\nabla^\bot r + \frac{1}{2}H\Bigr|_g^2 + \frac{1}{4}|H|_g^2.
	\end{equation}
	Putting \eqref{eq:mon-ine:neg-cur:trigonometric_identity} and \eqref{eq:mon-ine:neg-cur:Pythagoras} into \eqref{eq:mon-ine:neg-cur:nonsmooth_version} and neglecting negative terms on the right hand side implies the conclusion.
\end{proof}

\subsection{Proof of Theorem~\ref{thm:Li-Yau_inequality}}\label{subsec:Li-Yau}
If $b\geq0$, the theorem is a consequence of Lemma~\ref{lem:mon-ine:positive_curvature} in combination with \eqref{eq:rem:mon-ine:pos-cur}. Indeed, by Theorem~\ref{thm:mon-ine:existence_density}, we can multiply the inequality with $4$, let $\sigma \to 0+$ and let $\rho\to \infty$ to conclude the positive curvature case.

Now suppose that $b<0$. We are going to determine the limits in Lemma~\ref{lemma:mon-ine:negative_curvature} as $\sigma\to0+$ and $\rho\to\infty$. Using L'H\^ospital's rule twice, one readily verifies
\begin{equation*}
	\frac{\sigma^2}{\cosh(\sqrt{|b|}\sigma) - 1}\to \frac{2}{|b|}\qquad\text{as $\sigma\to0+$}.
\end{equation*}
Therefore, by Theorem~\ref{thm:mon-ine:existence_density},
\begin{align*}
	&2\phi(\sigma)\int_{\bar B_\sigma(p)}c_b(r)\,\mathrm d\|V\| = 2\pi |b|\frac{\sigma^2}{\cosh(\sqrt{|b|}\sigma) - 1}\frac{1}{\pi\sigma^2}\int_{\bar B_\sigma(p)}\cosh(\sqrt{|b|}r)\,\mathrm d\|V\|\\
	&\qquad \to 4\pi\mathbf \Theta^2(\|V\|,p).
\end{align*}
Similarly, by L'H\^ospital's rule,
\begin{equation*}
	\frac{\sqrt{|b|\pi}\sinh(\sqrt{|b|}\sigma)\sigma}{\cosh(\sqrt{|b|}\sigma)-1}\to 2\sqrt{\pi}\qquad\text{as $\sigma \to0+$}.
\end{equation*}
Hence, by H\"older's inequality and square integrability of the generalised mean curvature,
\begin{align*}
	&\phi(\sigma)\int_{\bar B_\sigma(p)}s_b(r)g(\nabla r,H)\,\mathrm d\|V\|\\
	&\qquad\leq \frac{\sqrt{|b|\pi}\sinh(\sqrt{|b|\sigma})\sigma}{\cosh(\sqrt{|b|}\sigma)-1}\left(\frac{\|V\|\bar B_\sigma(p)}{\pi\sigma^2}\right)^{1/2}\left(\int_{\bar B_\sigma(p)}|H|_g^2\,\mathrm d\|V\|\right)^{1/2}\\
	&\qquad\qquad\to \left(4\pi\mathbf \Theta^2(\|V\|,p)\right)^{1/2}\limsup_{\sigma\to0+}\left(\int_{\bar B_\sigma(p)}|H|_g^2\,\mathrm d\|V\|\right)^{1/2} = 0.
\end{align*}
All the other limits can be easily determined using that $\spt\|V\|$ is compact and using that $p\notin \spt\|\delta V\|_{\mathrm{sing}}$.\qed

\section{Diameter bounds}
In this Section, we provide the lower diameter bounds for varifolds (Lemma~\ref{lem:dia-pin:lower_diameter_bound} and Lemma~\ref{lem:dia-pin:asymptotically_negatively_curved}) that are needed to prove Theorem~\ref{thm:low_dia_bds_min}, Theorem~\ref{thm:lower_dia_bds_min_asypmt_neg}, and Theorem~\ref{thm:diameter_pinching}. To prove Lemma~\ref{lem:dia-pin:asymptotically_negatively_curved} we will need  the Hessian comparison theorem of the distance function for asymptotically non-positively curved manifolds (Lemma~\ref{lem:dia-pin:comparison}). At the end of this section, we will prove Theorem~\ref{thm:diameter_pinching}.

The following lemma is a direct combination of the representation formula for the first variation, Lemma~\ref{lem:pre:representing_first_variation}, with Rauch's comparison theorem, Lemma~\ref{lem:pre:Rauch_application}.
\begin{lemma} \label{lem:dia-pin:lower_diameter_bound}
	Suppose $m, n$ are positive integers, $m\leq n$, $N$ is a complete $n$-dimensional Riemannian manifold,  $p\in N$, $b>0$, $0<\rho<\min\{i_p(N),\frac{\pi}{2\sqrt{b}}\}$, the sectional curvature satisfies $\sup_{B_\rho(p)}K\leq b$, and $V\in \mathbf V_m(N)$ satisfies $\spt \|V\| \subset B_\rho(p)$. 
	
	Then, there holds
	\begin{equation*}
		m\int_N \sqrt{b}r\cot(\sqrt{b}r)\,\mathrm d\|V\| \leq \rho \|\delta V\|(N).
	\end{equation*}
\end{lemma}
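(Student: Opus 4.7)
The plan is to test the first variation $\delta V$ with a compactly supported avatar of the radial vector field $r\nabla r$ and sandwich the resulting quantity: Rauch's comparison gives the lower bound that produces the LHS of the claim, while the representation lemma gives the upper bound that produces the RHS.

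First I set up the test vector field. Since $N$ is complete and $\rho<i_p(N)$, Hopf--Rinow ensures that $\bar B_\rho(p)$ is compact, so $\spt\|V\|$ (closed in $N$ and contained in $B_\rho(p)$) is compact, and positive distance separates it from $N\setminus B_\rho(p)$. Pick a cutoff $\varphi\in C_c^\infty(B_\rho(p))$ with $0\leq\varphi\leq1$ and $\varphi\equiv1$ on an open neighbourhood $W$ of $\spt\|V\|$. Because $\rho<i_p(N)$, the distance function $r=d(p,\cdot)$ has $r^2/2$ smooth on $B_\rho(p)$, hence $r\nabla r=\nabla(r^2/2)$ is a smooth vector field extending continuously through $p$. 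Therefore $X:=\varphi\,r\nabla r\in\mathscr X(N)$.

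Next I derive the lower bound on $\delta V(X)$. Since $\varphi\equiv1$ on $W\supset\spt\|V\|$, the $V$-integrals involving $X$ agree with those involving $r\nabla r$. Applying Lemma~\ref{lem:pre:Rauch_application} with the star-shaped neighbourhood $B_\rho(p)$ (which lies in $B_{\pi/\sqrt b}(p)$ since $\rho<\pi/(2\sqrt b)$) gives the pointwise bound $\Div_T(r\nabla r)\geq m\sqrt b\,r\cot(\sqrt b\,r)$ on $\mathbf G_m(TB_\rho(p))$, whence
\begin{equation*}
\delta V(X)=\int \Div_T(r\nabla r)\,\mathrm dV\geq m\int_N \sqrt{b}\,r\cot(\sqrt b\,r)\,\mathrm d\|V\|,
\end{equation*}
the integrand being non-negative on $B_\rho(p)$ by \eqref{eq:pre:rem:Rauch:lower_bound_zero}.

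Finally, for the upper bound, if $\|\delta V\|(N)=\infty$ the claim is trivial, so I may assume $\|\delta V\|$ is a (finite) Radon measure. Lemma~\ref{lem:pre:representing_first_variation} then provides $\eta$ with $|\eta|_g\leq1$ and $\delta V(X)=\int g(X,\eta)\,\mathrm d\|\delta V\|$. The standard fact $\spt\|\delta V\|\subset\spt\|V\|\subset B_\rho(p)$ (because $\delta V$ vanishes on vector fields supported off $\spt\|V\|$) combined with $0\leq\varphi\leq1$ yields $|X|_g=\varphi r\leq r\leq\rho$ on $\spt\|\delta V\|$, so $\delta V(X)\leq\rho\,\|\delta V\|(N)$. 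Combining with the lower bound gives the inequality. The only point requiring care is ensuring that $r\nabla r$ is a bona fide smooth vector field near $p$ and that truncating by $\varphi$ does not perturb the divergence on $\spt\|V\|$; both are immediate here because $r^2/2$ is smooth inside the injectivity radius and $\varphi$ is constant on a neighbourhood of $\spt\|V\|$.
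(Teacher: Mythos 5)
Your proof is correct and follows exactly the route the paper indicates: test $\delta V$ with $\varphi\, r\nabla r$, apply Lemma~\ref{lem:pre:Rauch_application} for the lower bound, and Lemma~\ref{lem:pre:representing_first_variation} together with $|X|_g\leq\rho$ for the upper bound. The paper only gives this as a one-line sketch ("a direct combination of the representation formula with Rauch's comparison theorem"), so you have simply spelled out the same argument in full; one small simplification is that $|X|_g=\varphi r\leq\rho$ holds globally, so the observation that $\spt\|\delta V\|\subset\spt\|V\|$ is not needed.
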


\begin{lemma}[\protect{See \cite{MR3237065}}]\label{lem:dia-pin:comparison}
	Suppose $(N,g)$ is a complete Riemannian manifold, $p\in N$, $r = d(p,\cdot)$, $\cut(p)$ denotes the cut locus of $p$ in $N$, $D(p) = N\setminus(\cut(p)\cup \{p\})$, $0<b\leq1/4$, and the radial curvature $K_r$ satisfies
	\begin{equation*}
	K_r \leq \frac{b}{r^2}\qquad \text{on $D(p)$}.
	\end{equation*}
	
	Then, the Hessian $\nabla^2r$ of $r$ can be bounded below by 
	\begin{equation*}
	\nabla^2r\geq \frac{1 + \sqrt{1-4b}}{2r}\Bigl(g - \mathrm dr \otimes \mathrm dr\Bigr)\qquad\text{on $D(p)$}.
	\end{equation*}
\end{lemma}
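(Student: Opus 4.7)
The plan is to reduce the operator Hessian bound to a scalar Jacobi field comparison along minimising radial geodesics from $p$. Fix $q\in D(p)$ and let $\gamma\colon[0,\ell]\to N$ be the unit-speed minimising geodesic from $p$ to $q$, with $\ell=r(q)$. Because $|\nabla r|_g\equiv 1$ on $D(p)$ forces $\nabla_{\nabla r}\nabla r=0$, the gradient $\nabla r$ lies in the kernel of $\nabla^2 r$, so it suffices to show
\[
\nabla^2 r(v,v)\geq \frac{\alpha}{\ell},\qquad \alpha:=\frac{1+\sqrt{1-4b}}{2},
\]
for every unit vector $v\in T_qN$ orthogonal to $\nabla r(q)$.

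The bridge to Jacobi fields is the following. Let $J$ be the unique Jacobi field along $\gamma$ with $J(0)=0$ and $J(\ell)=v$; as $v\perp\gamma'(\ell)$, the field $J$ stays perpendicular to $\gamma'$ and is the variation field of a one-parameter family of radial geodesics from $p$. A direct computation then gives $\nabla_{J(t)}\nabla r|_{\gamma(t)}=J'(t)$, so $\nabla^2 r(v,v)=\langle J'(\ell),J(\ell)\rangle_g$. Setting $f(t):=|J(t)|_g$, the inequality $\nabla^2 r(v,v)\geq\alpha/\ell$ is therefore equivalent to the scalar bound $f'(\ell)/f(\ell)\geq\alpha/\ell$. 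Since $q\in D(p)$, there are no conjugate points on $(0,\ell]$ and $f$ is smooth and positive there, with the initial asymptotics $f(t)\sim c\,t$ as $t\to 0^+$, where $c:=|J'(0)|_g>0$.

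Next, I would derive the scalar Riccati-type inequality $f''+(b/t^2)f\geq 0$ on $(0,\ell]$. Differentiating $f^2=\langle J,J\rangle_g$ twice gives $f\,f''+(f')^2=|J'|_g^2+\langle J'',J\rangle_g$, and the Jacobi equation $J''=-R(J,\gamma')\gamma'$ together with the identity $\langle R(J,\gamma')\gamma',J\rangle_g = K_r f^2$ (valid because $J\perp\gamma'$ and $|\gamma'|_g=1$) and the hypothesis $K_r\leq b/r^2$ yield $\langle J'',J\rangle_g\geq -(b/t^2)f^2$. Combining with the Cauchy--Schwarz bound $(f')^2=\langle J',J\rangle_g^2/f^2\leq |J'|_g^2$ and dividing by $f>0$ produces the claimed inequality.

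Finally, I would carry out the comparison with the model $g(t):=t^\alpha$, noting that $\alpha\in[1/2,1)$ is the larger root of $\alpha^2-\alpha+b=0$ (real since $0<b\leq 1/4$) and satisfies $g''+(b/t^2)g=0$. Writing $f=Fg$ with $F:=f/g>0$, the differential inequality becomes
\[
0\leq f''+\frac{b}{t^2}f = g\bigl(F''+\frac{2\alpha}{t}F'\bigr),
\]
i.e.\ $(t^{2\alpha}F')'\geq 0$ on $(0,\ell]$. The initial asymptotics $F(t)\sim c\,t^{1-\alpha}$ imply $t^{2\alpha}F'(t)\sim c(1-\alpha)t^{\alpha}\to 0$ as $t\to 0^+$, and monotonicity then forces $t^{2\alpha}F'(t)\geq 0$ throughout $(0,\ell]$, i.e.\ $F'\geq 0$. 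Consequently $f'(\ell)/f(\ell)\geq g'(\ell)/g(\ell)=\alpha/\ell$, completing the proof since $v\perp\nabla r(q)$ was arbitrary.

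The main obstacle I anticipate is the handling of the singular origin: both $f$ and $g$ vanish at $t=0$ and the coefficient $b/t^2$ is singular, so the comparison rests on the delicate limit $\lim_{t\to 0^+}t^{2\alpha}F'(t)=0$. This is exactly where the hypothesis $0<b\leq 1/4$ enters, guaranteeing $\alpha\in[1/2,1)$ and hence both $\alpha>0$ and $1-\alpha>0$ strictly. Beyond the origin, the comparison propagates cleanly up to the cut locus thanks to the absence of conjugate points on $(0,\ell]$ for $q\in D(p)$.
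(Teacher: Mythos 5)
Your proof is correct. Note that the paper does not supply its own argument for this lemma; it simply cites~\cite{MR3237065}, so there is no in-paper proof against which to compare. Your self-contained Jacobi field argument is the standard route to such Hessian comparison theorems: you correctly reduce to the scalar comparison along radial geodesics (using that $\nabla r$ lies in the kernel of $\nabla^2 r$ and that it suffices to bound $\nabla^2 r(v,v)$ for unit $v\perp\nabla r$), correctly derive $f''+(b/t^2)f\geq 0$ from the Jacobi equation, Cauchy--Schwarz, and the radial curvature hypothesis, and correctly carry out the Sturm-type comparison with the model solution $t^\alpha$ of $g''+(b/t^2)g=0$, where $\alpha=(1+\sqrt{1-4b})/2$ is the larger root of $\alpha^2-\alpha+b=0$. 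The key technical point you flag --- that $t^{2\alpha}F'(t)\to 0$ as $t\to0^+$ because $\alpha\in[1/2,1)$ forces both $\alpha>0$ and $1-\alpha>0$ --- is handled accurately, as is the observation that $q\in D(p)$ rules out conjugate points on $(0,\ell]$ so that $f>0$ and everything is smooth there. One small remark: the identity $\nabla_{J(t)}\nabla r=J'(t)$ deserves a word of justification (it follows by differentiating $\nabla r(\gamma_s(t))=\dot\gamma_s(t)$ in $s$ for a variation of $\gamma$ by unit-speed radial geodesics, exchanging covariant $s$- and $t$-derivatives, and using that the cut locus is closed so $\gamma_s(t)\notin\cut(p)$ for $(s,t)$ near $(0,\ell)$), but this is routine and does not affect correctness.
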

 
\begin{lemma} \label{lem:dia-pin:asymptotically_negatively_curved}
	Suppose $m,n$ are positive integers, $m\leq n$, $N$ is a complete $n$-dimensional Riemannian manifold, $p\in N$, $r = d(p,\cdot)$, $\cut(p)$ denotes the cut locus of $p$ in $N$, $D(p) = N\setminus(\cut(p)\cup \{p\})$, $0<b\leq1/4$, the radial curvature $K_r$ satisfies $K_r \leq \frac{b}{r^2}$ on $D(p)$, $\rho>0$, $V\in \mathbf V_m(N)$, $\spt \|V\|\subset B_\rho(p)$, and $\|V\|(\cut(p))=0$.
	
	Then, there holds
	\begin{equation*}
	\|V\|(N) \leq \frac{2\rho}{m(1 + \sqrt{1 - 4b})} \|\delta V\|(N).
	\end{equation*}
\end{lemma}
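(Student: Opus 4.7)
The strategy mirrors Lemma~\ref{lem:dia-pin:lower_diameter_bound}: test the first variation of $V$ with (a compactly supported approximation of) the radial field $r\nabla r$, invoke Lemma~\ref{lem:dia-pin:comparison} to bound the relative divergence from below, and invoke Lemma~\ref{lem:pre:representing_first_variation} to bound $\delta V$ from above by $\rho\,\|\delta V\|(N)$. The new feature compared with Lemma~\ref{lem:dia-pin:lower_diameter_bound} is that $B_\rho(p)$ is no longer required to be a geodesic ball, so $r\nabla r$ is only smooth on $D(p)$; the hypothesis $\|V\|(\cut(p))=0$ is exactly what allows this technical issue to be handled by a limiting argument.

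I first compute the divergence on $D(p)$. Setting $\lambda:=(1+\sqrt{1-4b})/2$, which lies in $[1/2,1)$ since $0<b\leq 1/4$, Lemma~\ref{lem:dia-pin:comparison} gives
\begin{equation*}
\nabla(r\nabla r)=\mathrm dr\otimes\mathrm dr+r\nabla^2r\geq(1-\lambda)\,\mathrm dr\otimes\mathrm dr+\lambda g.
\end{equation*}
Tracing against an orthonormal basis of any $T\in\mathbf G(T_qN,m)$ and using $1-\lambda\geq 0$ yields $\Div_T(r\nabla r)\geq (1-\lambda)|\nabla^Tr|_g^2+m\lambda\geq m\lambda$ on $\mathbf G_m(TD(p))$.

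We may assume $\|\delta V\|(N)<\infty$, in which case $\|\delta V\|$ is a Radon measure and Lemma~\ref{lem:pre:representing_first_variation} applies. Pick smooth cutoffs $\chi_\delta\in C_c^\infty(B_{\rho+\delta}(p))$ with $\chi_\delta=1$ on $B_\rho(p)$ and smooth cutoffs $\zeta_\varepsilon$ vanishing on a neighbourhood of $\cut(p)\cup\{p\}$ and converging monotonically to $1$ as $\varepsilon\to 0$; set $X_{\varepsilon,\delta}:=\chi_\delta\zeta_\varepsilon\cdot r\nabla r\in\mathscr X(N)$. Testing the first variation with $X_{\varepsilon,\delta}$, using $|X_{\varepsilon,\delta}|_g\leq\rho+\delta$ and exploiting that $\nabla\chi_\delta$ vanishes on $\spt\|V\|$, the divergence bound above yields
\begin{equation*}
m\lambda\int\zeta_\varepsilon\,\mathrm d\|V\|+\int g(\nabla^T\zeta_\varepsilon,r\nabla r)\,\mathrm dV\leq(\rho+\delta)\,\|\delta V\|(N).
\end{equation*}
Sending $\delta\to 0$ removes $\delta$ from the right-hand side, and $\int\zeta_\varepsilon\,\mathrm d\|V\|\to\|V\|(N)$ as $\varepsilon\to 0$ by $\|V\|(\cut(p))=0$ together with $\|V\|\{p\}=0$ (which holds by Lemma~\ref{lem:mon-ine:point_measure_zero} for $m\geq 2$). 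The bound $\|V\|(N)\leq\frac{2\rho}{m(1+\sqrt{1-4b})}\|\delta V\|(N)$ then follows provided the cross-term $\int g(\nabla^T\zeta_\varepsilon,r\nabla r)\,\mathrm dV$ vanishes in the limit. I expect this last step to be the main technical obstacle: a naive choice of $\zeta_\varepsilon$ has $|\nabla\zeta_\varepsilon|_g\sim 1/\varepsilon$, so controlling the cross-term requires finer information on $\|V\|\bigl(N_{2\varepsilon}(\cut(p)\cup\{p\})\bigr)$ than mere continuity of measures provides. I would resolve this either by a coarea-type selection of $\zeta_\varepsilon$ as a cutoff of the Lipschitz distance to $\cut(p)\cup\{p\}$ at a good level, or by a Greene--Wu mollification of $r$ that preserves the Hessian lower bound up to arbitrarily small error and thus obviates the cutoff altogether.
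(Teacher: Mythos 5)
Your argument is identical in substance to the paper's: derive $\Div_T(r\nabla r)\geq m(1+\sqrt{1-4b})/2$ on $\mathbf G_m(TD(p))$ from Lemma~\ref{lem:dia-pin:comparison}, test the first variation with $r\nabla r$, and apply Lemma~\ref{lem:pre:representing_first_variation}. The paper's own proof is in fact \emph{more} terse than yours on the very point you flag: it simply writes ``test with the vector field $r\nabla r$'' without remarking that $r\nabla r$ is not a smooth compactly supported field on $N$, and the hypothesis $\|V\|(\cut(p))=0$ is present precisely to license an approximation argument like the one you sketch. Of your two proposed remedies, the coarea selection works and the Greene--Wu one does not. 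For the former, set $F(\delta):=\|V\|\{x:0<\mathrm{dist}(x,\cut(p))<\delta\}$; by Fubini,
\begin{equation*}
\int_0^{\delta_0}\frac{F(2\delta)-F(\delta)}{\delta}\,\mathrm d\delta\leq(\log 2)\,F(2\delta_0)\longrightarrow 0 \quad\text{as }\delta_0\to 0,
\end{equation*}
so along a subsequence $\delta_k\to 0$ one has $\delta_k^{-1}\bigl(F(2\delta_k)-F(\delta_k)\bigr)\to 0$, and the corresponding Lipschitz cutoffs $\zeta_{\delta_k}$ built from $\mathrm{dist}(\cdot,\cut(p))$ (suitably smoothed) have cross-term bounded by $\rho\,\delta_k^{-1}\bigl(F(2\delta_k)-F(\delta_k)\bigr)\to 0$. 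Greene--Wu mollification, by contrast, preserves \emph{upper} Hessian bounds (semiconcavity), but $r$ is not semiconvex across $\cut(p)$; the \emph{lower} Hessian bound you need fails there in every weak sense, and a mollified $r_\varepsilon$ will generically have $\nabla^2 r_\varepsilon$ very negative in a neighbourhood of the cut locus, which is exactly where $\|V\|$ may concentrate. Two minor points: $r\nabla r$ extends smoothly through $p$ (in normal coordinates it is the Euler field $x^i\partial_i$), so the cutoff only needs to vanish near $\cut(p)$; consequently the appeal to Lemma~\ref{lem:mon-ine:point_measure_zero} is unnecessary, and is in any case inapplicable for $m=1$, which the statement allows.
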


\begin{proof}
	Using Lemma~\ref{lem:dia-pin:comparison}, we compute
	\begin{align*}
		\nabla(r\nabla r) &= \mathrm dr\otimes \mathrm dr + r\nabla^2r \\
		&\geq \frac{1+\sqrt{1-4b}}{2}\mathrm dr\otimes \mathrm dr + \frac{1+\sqrt{1-4b}}{2}\left(g - \mathrm dr\otimes\mathrm dr\right) = \frac{1+\sqrt{1-4b}}{2}g
	\end{align*}
	and thus
	\begin{equation*}
		\Div_T(r\nabla r) \geq m\frac{1+\sqrt{1-4b}}{2}
	\end{equation*}
	for all $T\in\mathbf G_m(TD(p))$. Now we can test the first variation formula~\eqref{eq:intro:var:mean_curvature} with the vector field $r\nabla r$ and use Lemma~\ref{lem:pre:representing_first_variation} to conclude the proof.
\end{proof}

\subsection{Proof of Theorem~\ref{thm:diameter_pinching}}\label{subsec:proof_diameter_pinching}
The following proof is based on \textsc{Simon}~\cite[Lemma~1.1]{MR1243525}.

Let $0<\rho<\rho_0:=\min\{i,\frac{\pi}{2\sqrt{b}}\}$. Given any $p\in N$, we notice that $\|V\|B_\sigma(p) = \|V\|\bar B_\sigma(p)$ for all but countably many $\sigma>0$. Hence, we can combine \eqref{eq:thm:dia-pin:lower_density_bound} with Theorem~\ref{thm:mon-ine:existence_density}, \eqref{eq:rem:mon-ine:pos-cur}, and Lemma~\ref{lem:mon-ine:positive_curvature} to deduce
\begin{align*}
	\pi &\leq \frac{\|V\|B_\rho(p)}{\rho^2} + \frac{1}{16}\int_{B_\rho(p)}|H|^2_g\,\mathrm d\|V\| + b\|V\|B_\rho(p)\\
	&\qquad + \frac{1}{4}\left(\frac{\|V\|B_\rho(p)}{\rho^2} + \int_{B_\rho(p)}|H|_g^2\,\mathrm d\|V\|\right)
\end{align*}
and thus
\begin{equation}\label{eq:proof:dia-pin:monotonicity_inequality}
	\pi \leq (2 + b\rho^2)\frac{\|V\|B_\rho(p)}{\rho^2} + \frac{1}{2}\int_{B_\rho(p)}|H|_g^2\,\mathrm d\|V\|.
\end{equation}
Define
\begin{equation}\label{eq:proof:dia-pin:definition_area_bound}
	C_{\eqref{eq:proof:dia-pin:definition_area_bound}}(i,b):=\min\Bigl\{\frac{\rho_0^2}{2 + b\rho_0^2},\frac{\pi^3}{9b}\Bigr\}\qquad \text{for $\rho_0 = \min\Bigl\{i,\frac{\pi}{2\sqrt{b}}\Bigr\}$},
\end{equation}
where $\frac{1}{0}$ is interpreted $\infty$ and $0\cdot\infty$ is interpreted $0$. For the rest of the proof we assume that $\|V\|(N)\leq C_{\eqref{eq:proof:dia-pin:definition_area_bound}}(i,b)$. Then, it follows from~\eqref{eq:proof:dia-pin:monotonicity_inequality} that
\begin{equation}\label{eq:proof:dia-pin:lower_bound_Willmore}
	\pi\leq \int_{N}|H|^2_g\,\mathrm d\|V\|
\end{equation}
which proves \eqref{eq:thm:dia-pin:lower_bound_Willmore}. Now, pick
\begin{equation*}
	\rho :=\frac{1}{2}\sqrt{\|V\|(N)\left/\int |H|_g^2\,\mathrm d\|V\|\right.}.
\end{equation*}
From \eqref{eq:proof:dia-pin:definition_area_bound} and \eqref{eq:proof:dia-pin:lower_bound_Willmore} we deduce $2\rho \leq \min\{i,\frac{\pi}{3\sqrt{b}}\}$. In particular, $\rho<\rho_0$. If also $d_{\mathrm{ext}}(\spt\|V\|)<\min\{i,\frac{\pi}{3\sqrt{b}}\}$, then we can combine \eqref{eq:pre:rem:Rauch:lower_bound_pithird} with Lemma~\ref{lem:dia-pin:lower_diameter_bound} and use H\"older's inequality to infer
\begin{equation*}
	\|V\|(N)\leq d_{\mathrm{ext}}(\spt\|V\|)\|\delta V\|\leq d_{\mathrm{ext}}(\spt\|V\|) \sqrt{\|V\|(N)\int_N|H|^2_g\,\mathrm d\|V\|}
\end{equation*}
which implies $2\rho \leq d_{\mathrm{ext}}(\spt\|V\|)$ and which proves~\eqref{eq:thm:dia-pin:lower_diameter_bound}. In any case, we are guaranteed that $2\rho \leq d_{\mathrm{ext}}(\spt\|V\|)$. Now, choose a point $p\in\spt\|V\|$ such that $d_{\mathrm{ext}}(\spt\|V\|) = \max_{q\in\spt\|V\|}d(p,q)$ and let $\nu$ be the integer with 
\begin{equation*}
	\frac{d_{\mathrm{ext}}(\spt\|V\|)}{\rho}-1<\nu\leq\frac{d_{\mathrm{ext}}(\spt\|V\|)}{\rho}. 
\end{equation*}
Let $r:=d(p,\cdot)$ and for each $j=1,\ldots,\nu$ choose $p_j\in \spt\|V\|\cap\{r=\rho j\}$. Then, with $p_0:=p$, the balls $B_{\rho/2}(p_0),\ldots,B_{\rho/2}(p_\nu)$ are pairwise disjoint. Thus, summing over $j=0,\ldots,\nu$ in \eqref{eq:proof:dia-pin:monotonicity_inequality}, we infer
\begin{equation*}
	(\nu + 1)\pi \leq \frac{2}{\rho^2}\|V\|(N) + \frac{1}{2}\int_N|H|_g^2\,\mathrm d\|V\| + b\|V\|(N).
\end{equation*} 
Hence, using $\nu + 1\geq d_{\mathrm{ext}}(\spt\|V\|)/\rho$ and using \eqref{eq:proof:dia-pin:lower_bound_Willmore}, we deduce \eqref{eq:thm:dia-pin:upper_diameter_bound} which concludes the proof. \qed 

\section{Sobolev and isoperimetric inequalities inequalities}
In this section we will prove the Sobolev inequality (see Theorem~\ref{thm:Sobolev_inequality}) and the isoperimetric inequality (see Corollary~\ref{cor:sob-ine}). First, we will need the following Lemma.

\begin{lemma}[\protect{See \cite[Lemma 18.7]{MR756417} or \cite[Lemma 6.3]{scharrer:MSc}}]\label{lem:sob-ine}
	Suppose $m$ is a positive integer, $f,g$ are real valued functions on the interval $(0,\infty)$, $f$ is bounded and non-decreasing,
	\begin{equation*}
		1\leq\limsup_{t\to0+}\frac{f(t)}{t^m},
	\end{equation*}	
	and 
	\begin{equation*}
		\frac{f(\sigma)}{\sigma^{m}}\leq \frac{f(\rho)}{\rho^m} + \int_\sigma^\rho\frac{1}{t^m}g(t)\,\mathrm dt
	\end{equation*}
	for all $0<\sigma<\rho\leq\rho_0:=(2^{m+1}\lim_{t\to\infty}f(t))^{1/m}$.
	
	Then, there exists $0<\rho\leq \rho_0$ such that
	\begin{equation*}
		f(5\rho) < \frac{5^m}{2} \rho_0 g(\rho).
	\end{equation*}
\end{lemma}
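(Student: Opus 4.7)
The plan is to argue by contradiction. If the conclusion fails then
\[
g(\rho) \leq \frac{2f(5\rho)}{5^m\rho_0}\qquad\text{for every }\rho\in(0,\rho_0].
\]
Substituting this into the hypothesis and performing the change of variable $u=5t$ in the resulting integral transforms it into
\[
\phi(\sigma) \leq \phi(\rho) + \frac{2}{5\rho_0}\int_{5\sigma}^{5\rho}\phi(u)\,du,\qquad 0<\sigma<\rho\leq\rho_0,
\]
where $\phi(t):=f(t)/t^m$.

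The definition of $\rho_0$ gives $\lim_{t\to\infty}f(t) = \rho_0^m/2^{m+1}$, and hence $\phi(s)\leq 2^{-(m+1)}$ for all $s\geq\rho_0$. Setting $\rho=\rho_0$ in the display above and splitting the integral as $\int_{5\sigma}^{5\rho_0}=\int_{5\sigma}^{\rho_0}+\int_{\rho_0}^{5\rho_0}$, the outer piece is bounded by $4\rho_0\cdot 2^{-(m+1)}$, which localises the inequality to
\[
\phi(s) \leq \frac{13}{5\cdot 2^{m+1}} + \frac{2}{5\rho_0}\int_{5s}^{\rho_0}\phi(u)\,du,\qquad 0<s\leq\rho_0,
\]
with the convention that $\int_a^b:=0$ whenever $a\geq b$ (so the estimate is automatic on the range $s\in(\rho_0/5,\rho_0]$).

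The main step is to close this inequality on itself. Inserting the bound on $\phi(s)$ into the integral over $[5\sigma,\rho_0]$, switching the order of integration over the triangular region $\{5\sigma\leq s\leq\rho_0/5,\;5s\leq u\leq\rho_0\}$, and estimating the resulting inner length by $u/5-5\sigma\leq\rho_0/5$, yields the self-improving estimate
\[
\int_{5\sigma}^{\rho_0}\phi(u)\,du\leq \frac{13\rho_0}{5\cdot 2^{m+1}} + \frac{2}{25}\int_{5\sigma}^{\rho_0}\phi(u)\,du,\qquad 0<\sigma\leq\rho_0/25.
\]
Since $2/25<1$ the integral can be absorbed on the left, giving $\int_{5\sigma}^{\rho_0}\phi\leq 65\rho_0/(23\cdot 2^{m+1})$; substituting this back into the localised bound produces the uniform estimate
\[
\phi(\sigma)\leq\frac{13}{5\cdot 2^{m+1}}+\frac{26}{23\cdot 2^{m+1}}=\frac{429}{115\cdot 2^{m+1}}\qquad\text{for all }\sigma\in(0,\rho_0/25].
\]

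Because $429/(115\cdot 2^{m+1})\leq 429/460<1$ for every positive integer $m$, this contradicts the hypothesis $\limsup_{\sigma\to 0+}\phi(\sigma)\geq 1$ (apply to a sequence $\sigma_k\to 0+$ with $\phi(\sigma_k)\to L\geq 1$). The delicate point of the argument is the quantitative closure: one has to track the length of the inner $s$-interval after the Fubini swap precisely enough that the coefficient in front of the integral is strictly less than one, and simultaneously arrange that the resulting uniform bound falls below $1$ in the borderline case $m=1$, where the numerical margin $429/460$ is tight.
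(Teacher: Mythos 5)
Your argument is correct. The paper itself does not supply a proof of this lemma, instead citing Simon's Lemma 18.7 and Scharrer's MSc thesis; your write-up therefore fills a gap by giving a self-contained argument, and the structure you use (contradiction, substitution $u=5t$, taking $\rho=\rho_0$, splitting off the tail $[\rho_0,5\rho_0]$ using $\phi\leq 2^{-(m+1)}$ there, and then a Gronwall-style self-absorption of the remaining integral) is essentially the standard route that the cited references take as well.

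Two small points you should make explicit before this could go into the paper. First, at several places (bounding $\int_{5\sigma}^{5\rho_0}\phi$ by $\int_{\rho_0}^{5\rho_0}\phi$ when $5\sigma>\rho_0$, and $\int_{25\sigma}^{\rho_0}\phi\leq\int_{5\sigma}^{\rho_0}\phi$, and the Tonelli swap) you need $\phi\geq 0$. This is true but not an explicit hypothesis: from $\limsup_{t\to 0+}f(t)/t^m\geq 1$ one gets $f(t_k)>0$ along a sequence $t_k\to 0+$, and monotonicity of $f$ then gives $f>0$ on $(0,\infty)$, hence $\phi>0$ and also $\rho_0\in(0,\infty)$. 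Second, in the Fubini step the inner length is $v/5-5\sigma$ (not $u/5-5\sigma$); the bound $v/5-5\sigma\leq\rho_0/5$ is what you use, so this is just a typo. The absorption step also needs $I=\int_{5\sigma}^{\rho_0}\phi<\infty$, which holds because $\phi$ is bounded on $[5\sigma,\rho_0]$ for each fixed $\sigma>0$. With these remarks the numerics $\frac{13}{5}+\frac{26}{23}=\frac{429}{115}$ and $\frac{429}{115\cdot 2^{m+1}}\leq\frac{429}{460}<1$ check out, and the contradiction with $\limsup_{\sigma\to 0+}\phi(\sigma)\geq 1$ is clean.
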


\subsection{Proof of Theorem~\ref{thm:Sobolev_inequality}}\label{subsec:proof:Sobolev_inequality}

The following proof is an adaptation of \cite[Theorem~7.1]{MR0307015} and \cite[Theorem 6.5]{scharrer:MSc}.

Let $h\leq1$ be a non-negative compactly supported $C^1$ function on $N$ and define the varifold $V_h\in\mathbf V_m(N)$ by letting
\begin{equation*}
	V_h(k) = \int_{\mathbf G_m(N)}k(x,T)h(x)\,\mathrm d V(x,T)
\end{equation*}
for all compactly supported continuous functions $k$ on $\mathbf G_m(N)$. Given any $X\in\mathscr X(N)$, we compute for $T\in\mathbf G_m(TN)$ (see \cite[Lemma 3.2(ii)]{MR365424})
\begin{equation*}
	\Div_T(hX) = h\Div_TX + g(X,\nabla^Th)
\end{equation*}
and thus, by Lemma~\ref{lem:pre:representing_first_variation},
\begin{align*}
	\delta V_h(X) &= \int_Nhg(X,\eta)\,\mathrm\|\delta V\| - \int_{\mathbf G_m(N)}g(X,\nabla^\top h)\,\mathrm d V.
\end{align*}
Given any open set $U\subset N$, it follows
\begin{equation}\label{eq:proof:Sobolev:weighted_first_variation}
	\|\delta V_h\|(U)\leq \int_Uh\,\mathrm d\|\delta V\| + \int_{\mathbf \pi^{-1}[U]}|\nabla^\top h|_g\,\mathrm dV
\end{equation}
where $\pi:\mathbf G_m(N)\to N$ is the canonical projection. With the usual approximation from above (see \cite[2.1.3 (5)]{MR41:1976}), one can see that the inequality remains valid for $U$ replaced with any closed set. In particular, $\|\delta V_h\|$ is a Radon measure. Hence, we can apply Lemma~\ref{lem:mon-ine:m-dimensional} in combination with Lemma~\ref{lem:pre:representing_first_variation}, Lemma~\ref{lem:pre:Rauch_application}, and \eqref{eq:mon-ine:rem:m-dim} to deduce
\begin{equation*}
	\frac{\|V_h\|\bar B_\sigma(p)}{\sigma^m} \leq \frac{\|V_h\|\bar B_\rho(p)}{\rho^m} + \int_\sigma^\rho\frac{1}{t^m}\left(\|\delta V_h\|\bar B_t(p) + m\sqrt{b}\|V_h\|\bar B_t(p)\right)\,\mathrm dt
\end{equation*}
for all $p\in N$ and $0<\sigma<\rho<\min\{i(\spt\|V\|),\frac{\pi}{2\sqrt{b}}\}$.
Define the constant
\begin{equation}\label{eq:Sob-ine:constant}
	C_\eqref{eq:Sob-ine:constant}(m):=\frac{5^m2^{1/m}}{\boldsymbol\alpha(m)^{1/m}}.	
\end{equation}
Given any $p\in\spt\|V\|$ with $\mathbf \Theta^{*m}(\|V_h\|,p)\geq1$, Lemma~\ref{lem:sob-ine} applied with $f(t) = \boldsymbol\alpha(m)^{-1}\|V_h\|\bar B_t(p)$ and 
\begin{equation*}
	g(t) = \boldsymbol\alpha(m)^{-1}\left(\|\delta V_h\|\bar B_t(p) + m\sqrt{b}\|V_h\|\bar B_t(p)\right)
\end{equation*}
implies by \eqref{eq:proof:Sobolev:weighted_first_variation}
\begin{align*}
	&\int_{\bar B_{5\rho}(p)} h\,\mathrm d\|V\|\leq 	C_\eqref{eq:Sob-ine:constant}(m)\left(\int h\,\mathrm d\|V\|\right)^{1/m}\int_{\bar B_\rho(p)}h\,\mathrm d\|\delta V\|\\ 
	&\quad + C_\eqref{eq:Sob-ine:constant}(m)\left(\int h\,\mathrm d\|V\|\right)^{1/m}\left(m\sqrt{b}\int_{\bar B_\rho(p)}h\,\mathrm d\|V\|+ \int_{\pi^{-1}[\bar B_\rho(p)]}|\nabla^\top h|_g\,\mathrm dV\right). 
\end{align*}
Now, Vitali's covering theorem (see \cite[2.8.5,\,6,\,8]{MR41:1976}) implies the assertion. \qed

\subsection{Proof of Corollary~\ref{cor:sob-ine}} \label{subsec:proof:isoperimetri_inequality}
First, we apply Theorem~\ref{thm:Sobolev_inequality} with $h$ approaching the constant function $1$ from below, to deduce
\begin{equation*}
	\|V\|(N)\leq C_{\eqref{eq:Sob-ine:constant}}\|V\|(N)^{1/m}\left(\|\delta V\|(N) + m\sqrt{b}\|V\|(N)\right).
\end{equation*}
Applying this inequality again on the right hand side, we inductively infer for all positive integers $k\geq1$,
\begin{align*}
	\|V\|(N)&\leq C_{\eqref{eq:Sob-ine:constant}}\|V\|(N)^{1/m}\|\delta V\|(N)\sum_{j=0}^{k-1}\left(m\sqrt{b}C_{\eqref{eq:Sob-ine:constant}}\|V\|(N)^{1/m}\right)^j \\
	&\qquad+ \left(m\sqrt{b}C_{\eqref{eq:Sob-ine:constant}}\|V\|(N)^{1/m}\right)^k\|V\|(N) \\
	&\leq C_{\eqref{eq:Sob-ine:constant}}\|V\|(N)^{1/m}\|\delta V\|(N)\sum_{j=0}^\infty \left(m\sqrt{b}C_{\eqref{eq:Sob-ine:constant}}\|V\|(N)^{1/m}\right)^j\\
	&\leq 2C_{\eqref{eq:Sob-ine:constant}}\|V\|(N)^{1/m}\|\delta V\|(N) 
\end{align*}
provided that $m\sqrt{b}C_{\eqref{eq:Sob-ine:constant}}\|V\|(N)^{1/m}\leq\frac{1}{2}$.

\end{document}